\documentclass[11pt,reqno]{amsart}
\usepackage{todonotes}

\title[Running title]{Sharp stability and instability of stratified steady states for the incompressible porous media equation}

\author{Seyed Abdolhamid Banihashemi, Sepehr Mohammadkhani, Huy Q. Nguyen}
\address{Department of Mathematics, University of Maryland, College Park, MD 20742}
\email[S. A. Banihashemi]{sabani@umd.edu}
\email[S. Mohammadkhani]{seperman@umd.edu}
\email[H. Q. Nguyen]{hnguye90@umd.edu}

\usepackage[margin=1in]{geometry}
\usepackage{amsmath, amsthm, amssymb, mathrsfs, stmaryrd}
\usepackage{times}
\usepackage{color}
\usepackage[colorlinks=true, pdfstartview=FitV, linkcolor=blue, citecolor=blue, urlcolor=blue]{hyperref}
\usepackage{hyperref}
\usepackage[nameinlink,noabbrev,capitalise]{cleveref}

\usepackage{hyperref}
\usepackage[nameinlink,noabbrev,capitalise]{cleveref}

\newcommand{\bq}{\begin{equation}}
\newcommand{\eq}{\end{equation}}
\newcommand{\bqa}{\begin{eqnarray*}}
\newcommand{\eqa}{\end{eqnarray*}}

\theoremstyle{plain}
\newtheorem{theo}{Theorem}[section]
\newtheorem{prop}[theo]{Proposition}
\newtheorem{lemm}[theo]{Lemma}
\newtheorem{coro}[theo]{Corollary}

\newtheorem{defi}[theo]{Definition}
\theoremstyle{definition}
\newtheorem{rema}[theo]{Remark}

\newtheorem{exam}[theo]{Example}
\DeclareMathOperator{\RE}{Re}

\DeclareMathOperator{\IM}{Im}

\DeclareSymbolFont{pletters}{OT1}{cmr}{m}{sl}
\DeclareMathSymbol{s}{\mathalpha}{pletters}{`s}

\def\tt{\theta}
\def\eps{\varepsilon}
\def\na{\nabla}

\def\mez{\frac{1}{2}}
\def\tdm{\frac{3}{2}}

\def\Rr{\mathbb{R}}
\def\T{\mathbb{T}}
\def\Nn{\mathbb{N}}
\def\Zz{\mathbb{Z}}
\def\Cc{\mathbb{C}}

\def\cL{\mathcal{L}}
\def\ld{\lambda}

\def\p{\partial}
\def\na{\nabla}
\def\wc{\rightharpoonup}

\def\om{\omega}
\def\ol{\overline}

\def\a{\alpha}
\def\b{\beta}
\def\vp{\varphi}
\def\dv{\text{div}}

\numberwithin{equation}{section}

\newcommand{\pay}{\partial_1}
\newcommand{\pad}{\partial_2}

\def\proj{\mathbb{P}}
\def\d{\delta}

\newcommand{\ddt}{\frac{d}{dt}\,}

\newcommand{\pnorm}[2]
    {
        \Vert #1 \Vert _{L^{#2}}
    }

\newcommand{\tnorm}[1]
    {
        \Vert #1 \Vert _{L^2}
    }
\newcommand{\hnorm}[2]
    {
        \Vert #1 \Vert _{H^{#2}}
    }

\begin{document}
\newcommand{\Huy}[1]{{\color{orange} \textbf{H:} #1}}
\begin{abstract}
We study the stability and instability of stratified steady states
$\rho_s=\rho_s(y)$ for the two-dimensional incompressible porous media
equation on $\mathbb{T}\times(-1,1)$ and
$\mathbb{T}\times\mathbb{R}$. On the periodic channel, uniformly
decreasing steady states satisfying natural boundary-compatibility
conditions are nonlinearly stable under small $H^m$
perturbations, for every integer $m>2$. The solutions converge in
$L^2$ to the measure-preserving stratification of the initial density
at the rate $t^{-m/2}$. Conversely, every steady state with
$\sup\rho_s'>0$ is nonlinearly unstable in $H^m$. On the infinite cylinder, we
prove nonlinear instability under the additional gap condition
\[
\sup_{\mathbb{R}}\rho_s' >\limsup_{|y|\to\infty}\rho_s'(y).
\]
In both settings, the instability is generated by positive eigenvalues
of the linearized operator converging to $\sup\rho_s'$, which equals
both its spectral bound and semigroup growth bound.
\end{abstract}

\keywords{Incompressible porous media, stratified steady states, nonlinear stability, spectral instability, nonlinear instability}

\noindent\thanks{\em{ MSC Classification: Primary 35Q35, 76S05; Secondary 35B35, 35B40}}

\maketitle

\section{Introduction} 
The incompressible porous media (IPM) equation describes the evolution of a density transported by a fluid velocity that is itself determined by the density through Darcy's law:
\begin{subequations}\label{sys:IPM}
    \begin{align}
                &\partial_t \rho + u\cdot \nabla \rho = 0 \quad\text{in}~\Omega, \label{eq:mass}\\
       &u + \nabla p = -(0,\rho),\quad  \dv  u = 0 \quad\text{in}~ \Omega, \label{eq:darcy}\\
        & u\cdot n = 0  \quad\text{on}~ \partial\Omega, \label{eq:uboundary}
            \end{align}
\end{subequations}
where $\Omega\subset \Rr^2$ is a smooth domain and $n$ is the outward unit normal to $\p\Omega$.  Here $\rho(x, y, t)$ is the density, $u(x, y, t)$ is the fluid velocity, and $p(x, y, t)$ is the fluid pressure. IPM is particularly important as a model for buoyancy-driven flow in porous media. \cite{Bear}. 

Mathematical analysis of  IPM has been developed  for the  following domains: $\Rr^2$, $\T^2$, $\T\times \Rr$, and $\T\times (-1, 1)$. The velocity is related to the density via the  Biot-Savart law 
\bq\label{u in terms of rho}
    u= \nabla^\perp \Delta_D^{-1}\partial_1 \rho,\quad \na^\perp=(\p_2, -\p_1)\equiv (\p_y, -\p_x),
\eq
where $\Delta_D^{-1}$ denotes the inverse of the Laplace operator with the Dirichlet boundary condition when $\Omega=\T\times (-1, 1)$. See Lemma \ref{lemm:BS} for a rigorous derivation of \eqref{u in terms of rho}. IPM is thus an active scalar equation, where the velocity is a zeroth-order operator of  the scalar. This is similar to SQG, whose velocity is given by  $u=\na^\perp (-\Delta_D)^{-\mez} \rho$ \cite{Res, ConIgn, ConNgu}. An important difference is that the  Biot-Savart law \eqref{u in terms of rho} of IPM is {\it anisotropic} due to gravity. Nevertheless, similarly to SQG, IPM is locally well-posed in sufficiently smooth Sobolev spaces $H^m(\Omega)$ that embed into $W^{1, \infty}(\Omega)$. It suffices to take  $m>2$ for $\Omega=\Rr^2$, $\T^2$ \cite{CGO}, and $2<m\in \Nn$ for $\Omega=\T\times (-1, 1)$. Non-uniqueness of weak solutions was proven in \cite{CFG, Sze}. Small-scale creation for IPM was investigated in \cite{KisYao}, where the authors proved  infinite-in-time growth of Sobolev norms of solutions  for certain classes of initial data which are odd in $y$, provided the (smooth) solutions exist globally.  C\'ordoba and Martínez-Zoroa \cite{CorMar} constructed compactly supported smooth forcing term and initial data that lead to finite-time blowup of the $C^1$ norm of the classical solution to the forced IPM equation. For the unforced IPM equation in certain wedge domains,  Dembski \cite{Dem} proved finite-time singularity formation for Lipschitz continuous solutions which vanish on the boundary. 

This paper is concerned with the stability and instability of steady states of IPM. Clearly, any function $\rho_s(y)$ of $y$ only is a steady state of \eqref{sys:IPM}, with $u=0$ and $p(x, y)=-\int \rho_s(y)dy$.  Conversely, we have 
\begin{lemm}\cite[Lemma 1.1]{Elgindi} \label{lemm: stat density is a function of y only}
Suppose that $\rho$ is $C^1$ steady state of \eqref{sys:IPM} and $\rho$ decays fast enough at infinity when $\Omega$ is unbounded. If the function $f(x, y)=y$ is well-defined on $\Omega$, then $\rho$ is a function of $y$ only and $u= 0$. 
\end{lemm}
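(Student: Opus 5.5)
The plan is an energy identity. At a steady state $\rho$ satisfies $u\cdot\na\rho=0$, so potential energy is never exchanged with kinetic energy. Concretely, I will test Darcy's law against $u$ and show that the work done by gravity vanishes.

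First, take the inner product of \eqref{eq:darcy} with $u$ over $\Omega$:
\[
\int_\Omega |u|^2 + \int_\Omega u\cdot\na p = -\int_\Omega \rho\, u_2 .
\]
The pressure term vanishes after integrating by parts, using $\dv u=0$ and $u\cdot n=0$ on $\p\Omega$; in the unbounded case we also need enough decay of $p$ and $u$ for the boundary terms at infinity to drop.

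Next, write $u_2 = u\cdot\na y$. This is where the hypothesis that $f(x,y)=y$ is well defined on $\Omega$ is used: it excludes, for instance, the case where $y$ is periodic. Then
\[
\int_\Omega \rho\, u_2 = \int_\Omega \rho\, u\cdot\na f = -\int_\Omega f\,\dv(\rho u) + \int_{\p\Omega} f\rho\, u\cdot n .
\]
Here $\dv(\rho u)=u\cdot\na\rho+\rho\,\dv u=0$, because $\rho$ is a $C^1$ steady state of \eqref{eq:mass}. The boundary term also vanishes by \eqref{eq:uboundary}. Hence $\int_\Omega |u|^2=0$, so $u\equiv0$.

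With $u=0$, Darcy's law reads $\na p=-(0,\rho)$. Therefore $\p_x p=0$, so $p=p(y)$, and $\rho=-\p_y p$ is a function of $y$ only. Strictly, $p$ is independent of $x$ on each horizontal slice of $\Omega$. This gives a function of $y$ alone on the slices we need, in particular for the strip and cylinder domains considered here.

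The main obstacle is justifying the integrations by parts when $\Omega$ is unbounded, such as $\T\times\Rr$ or $\Rr^2$. The weight $f=y$ grows, so the integrals $\int \rho u_2$ and $\int f\rho\, u\cdot n$ need decay of $\rho$ that beats the growth of $y$. To handle this, I would first truncate with a cutoff $\chi_R$ and run the same computation. The errors involve $\int |\na\chi_R|\,(|p||u| + |y||\rho||u|)$, which tend to zero under the "fast enough" decay assumption, for example $\rho\in L^1\cap L^2$ with $y\rho\in L^2$. Along the way I would use the Biot–Savart law \eqref{u in terms of rho} to get $u\in L^2$ and $p$ suitably controlled.
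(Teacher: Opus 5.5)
Your proposal is correct and follows essentially the paper's argument. You test Darcy's law against $u$ to get $\int_\Omega |u|^2=-\int_\Omega \rho\, u_2$, then use the transport equation with the weight $y$ to show $\int_\Omega \rho\, u_2=0$, and finally read off $u=0$ and $\rho=\rho(y)$ from Darcy's law. The only differences are cosmetic: you integrate by parts as $\int_\Omega \rho\, u\cdot\na y=-\int_\Omega y\,\dv(\rho u)$ where the paper multiplies $u\cdot\na\rho=0$ by $y$, and your cutoff argument simply makes explicit the ``decays fast enough'' assumption that the paper leaves informal.
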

\begin{proof}
Multiplying the equation  $u\cdot \nabla \rho = 0$  by $y$  and integrating  by parts, we obtain  $\int_\Omega u_2 \rho = 0$.  On the other hand, by multiplying the Darcy law \eqref{eq:darcy} by $u$ and integrating by parts, we find
    \bq\label{Darcy:kinetic}
    \int_\Omega |u|^2 = - \int_\Omega u_2 \rho.
    \eq
    It follows that $\int_{\Omega} |u|^2=0$, so that  $u\equiv 0$. Then,  \eqref{eq:darcy} implies  $\rho=\rho(y)$. 
\end{proof}
We fix a steady state  $\rho_s(y)$ and perturb it by $\eta(t, x, y)$, i.e. $\rho(x, y, t)=\rho_s(y)+\eta(x, y, t)$. Since  $\p_1 \rho=\p_1\eta$, \eqref{u in terms of rho} yields 
\bq\label{eq: u in terms of eta}
u= \nabla^\perp \Delta_D^{-1}\partial_1 \eta,
\eq
and hence the transport equation \eqref{eq:mass} gives  
\bq\label{eq: transport for eta}
    \partial_t \eta -\rho_s'(y) \partial_1 \Delta_D^{-1}\partial_1\eta+ u\cdot \nabla \eta = 0.
\eq
The linearization of \eqref{eq: transport for eta} reads
\bq\label{def:L}
    \partial_t \eta = L\eta,\quad L: = \rho_s'(y) \partial_1 \Delta_D^{-1}\partial_1.
\eq
Denoting the stream function by $\psi=\Delta_D^{-1}\p_1\eta$, an integration by parts in $x$ yields 
\begin{multline}\label{L2:L}
(L\eta, \eta)_{L^2(\Omega)}=-\int_\Omega  \rho_s'(y)  \Delta_D^{-1}\partial_1\eta  \partial_1\eta \\
=-\int_\Omega  \rho_s'(y)\psi  \Delta \psi  
=\int_\Omega \rho_s'|\na  \psi|^2+\int_\Omega \rho_s'' \psi\p_2\psi \\
=\int_\Omega \rho_s'|\na  \psi|^2-\mez \int_\Omega \rho_s''' |\psi|^2
=\int_\Omega \rho_s'| u|^2-\mez \int_\Omega \rho_s''' |\psi|^2.
\end{multline}
Consequently,  $L$ is negative if $\sup_y \rho_s'(y)<0$ and  $\sup_y\rho_s'''\ge 0$. 

The first stability results concern the linearly stratified equilibrium $\rho_s(y)=-y$, which satisfies the above conditions since $\rho_s'=-1$ and $\rho_s'''=0$. Elgindi \cite{Elgindi} proved asymptotic stability of this equilibrium  for initial perturbations $\eta_0 \in W^{4, 1}(\Rr^2)\cap H^m(\Rr^2)$ and $\eta_0\in H^m(\T^2)$, $m\ge 20$. The proofs in  \cite{Elgindi} use {\it pointwise-in-time decay} estimates for the linearized equation, which  are based upon Fourier analysis facilitated by the fact that the linearized operator $L$  has a constant coefficient when $\rho_s$ is linear.  Castro, Córdoba, and Lear \cite{CCL} subsequently established the corresponding stability result in the confined periodic channel $\Omega=\T\times(-1,1)$, where the boundary effect was controlled by considering perturbations  in $H^m_e(\Omega)$, $m\ge 10$,  the Sobolev space of functions whose vertical derivatives of even orders less than $m$ vanish on the boundary.  Bianchini, Crin-Barat, and Paicu \cite{BCBP} lowered the regularity of the initial perturbations in \cite{Elgindi} to $\eta_0\in \dot H^{1-\tau}\cap \dot H^m$, $\tau \in (0, 1)$, $m\ge 3+\tau$.  For quasi-linearly stratified states, Jo and Kim \cite{JoKim} obtained quantitative asymptotic stability and sharp decay rates when $\eta_0\in H^m(\Rr^2)$, $3<m\in \Nn$. 

Park \cite{Park} introduced a different approach  which  exploits the potential-energy structure of IPM. Since the potential energy $E(\rho(t))=\int_{\Omega} \rho(x, y, t)y$ is a Lyapunov functional and satisfies
\[
\frac{d}{dt}E(\rho(t))=-\|u(t)\|_{L^2}^2,
\]
one can obtain stability from {\it time-average decay} of the velocity without relying on pointwise decay estimates for the linearized equation. In particular, Park proved stability of the linear steady state  $\rho_s(y)=-y$ in the periodic channel for initial perturbations $\eta_0\in H^k_0(\Omega)$, $2<k\in \Nn$, and identified the measure-preserving stratification of the initial data as the asymptotic state. 

The above results concerning linear (or quasi-linear) steady states provided strong evidence for the stabilizing role of  uniformly decreasing steady states, i.e. $\sup_y \rho_s'<0$. On the other hand,  for nonlinear steady states, the energetic consideration \eqref{L2:L} for $L$ suggests the necessity of the second condition $\sup_y \rho'''\ge 0$. Remarkably, Bianchini, Jo, Park, and Wang  \cite{BJPW} extended the approach in \cite{Park} to establish stability of all steady states  $\rho'_s\in C^{m+1}(\T\times \Rr)$, $2<m\in \Rr$,  that satisfy  {\it only} the first condition $\sup_\Rr\rho_s'\le -c_0<0$. Moreover, the result in \cite{BJPW} holds for any perturbations $\eta_0 \in H^m(\T\times \Rr)$, $2<m\in \Rr$, which is expected to be the sharp Sobolev threshold  since IPM is locally well-posed in $H^{2+}$. Moreover, for  periodic channels,  \cite{KisYao} proved that {\it all} smooth steady states are nonlinearly unstable in $H^{2-\gamma}$, $\gamma>0$. 

The recent progress described above gives a rather complete picture of the stability of uniformly decreasing stratified states, including the sharp Sobolev regularity threshold. This naturally raises a complementary question: is the uniform monotonicity of the background density merely a sufficient condition for stability, or is it also the mechanism that separates stable and unstable stratifications? In this paper, we address this question in the periodic channel $\T\times (-1, 1)$ and the infinite cylinder $\T\times \Rr$.  We establish a sharp stable/unstable dichotomy in the periodic channel, subject to natural boundary-compatibility conditions and apart from the borderline case. In the infinite cylinder, we obtain the corresponding instability result under an additional separation condition on the far-field behavior.\begin{theo}[Periodic channel, informal version]\label{theo:intro:1}
Let $\Omega=\T\times (-1, 1)$ and $2<m\in \Nn$.  Consider a smooth steady state $\rho_s\in W^{\infty, \infty}((-1, 1))$. 

1) If  $\sup_y\rho_s'<0$ and $\rho''_s \in H^m_e(\Omega)$, then $\rho_s$ is  nonlinearly stable in $H^m_e(\Omega)$.

2) If $\sup_y\rho_s'>0$, then $\rho_s$ is  nonlinearly unstable in $H^m(\Omega)$.
 \end{theo}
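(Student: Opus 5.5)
Part 1 will follow the potential-energy approach of Park \cite{Park} and Bianchini--Jo--Park--Wang \cite{BJPW}, adapted to the channel. The conditions $\rho_s''\in H^m_e$ and $\eta_0\in H^m_e$ make the even-order vertical derivatives vanish on $y=\pm1$. This lets every Sobolev energy estimate be done through even reflection or cosine/sine expansions in $y$, with no boundary terms.

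\emph{Lyapunov functional.} I will use the relative potential energy
\[
\mathcal E(t)=\int_\Omega \big(G(\rho)-G(\rho_s)-G'(\rho_s)(\rho-\rho_s)\big),
\]
where $G$ is chosen so that $G'(\rho_s(y))=y$; this is possible because $\rho_s$ is strictly decreasing. Then $\mathcal E$ is comparable to $\|\eta\|_{L^2}^2$, using $\sup\rho_s'<0$. Transport preserves $\int G(\rho)$, and
\[
\frac{d}{dt}\mathcal E=\frac{d}{dt}\int (-y\rho)\cdot(-1)\ \text{(up to sign)}=-\|u\|_{L^2}^2,
\]
which gives $\int_0^\infty\|u\|_{L^2}^2\,dt\lesssim\|\eta_0\|_{L^2}^2$.

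\emph{Energy estimate and bootstrap.} The $H^m$ estimate will have the form
\[
\frac{d}{dt}\|\eta\|_{H^m}^2\le -c\|\partial_x\,\cdot\|^2_{\dot H^{m-1}\text{-type}}+C\|\nabla u\|_{L^\infty}\|\eta\|_{H^m}^2+\dots
\]
To get it, I will use the good term $\rho_s'\partial_1\Delta^{-1}\partial_1$, which damps because $\rho_s'<0$. The commutators with $\rho_s'$ will be controlled by interpolating against the time-integrable $\|u\|_{L^2}^2$. I will close a bootstrap for $\|\eta\|_{H^m}\le 2\varepsilon$.

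\emph{Convergence rate.} From the time integrability of $\|u\|_{L^2}^2$ combined with the interpolation $\|u\|_{L^2}\lesssim\|\partial_x\eta\|_{H^{-1}}$, I will derive the rate $t^{-m/2}$ toward the vertical rearrangement.

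\emph{Main obstacle for part 1.} The hard step is the top-order commutator $[\partial^m,\rho_s']\partial_1\Delta_D^{-1}\partial_1\eta$ near the boundary. I would handle it with the Fourier sine/cosine basis in $y$, which is compatible with $H^m_e$.

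Part 2 will be a Grenier-type instability argument. First, I will show that $L$ has positive eigenvalues approaching $\Lambda:=\sup\rho_s'>0$. Fourier transforming in $x$ with mode $k$, the equation $L\eta=\lambda\eta$ becomes, for $\psi=\Delta_D^{-1}ik\eta$,
\[
\lambda(\psi''-k^2\psi)=-k^2\rho_s'(y)\psi,\quad \psi(\pm1)=0,
\]
that is, a Sturm--Liouville problem
\[
-\psi''+k^2\big(1-\rho_s'(y)/\lambda\big)\psi=0.
\]
Fix $\lambda<\Lambda$ close to $\Lambda$. On an interval where $\rho_s'>\lambda$ the potential is negative, so for $k$ large there is a solution; by the variational characterization, $\lambda_k=\sup_\psi \frac{k^2\int\rho_s'|\psi|^2}{\int|\psi'|^2+k^2|\psi|^2}$ is attained and $\lambda_k\to\Lambda$ as $k\to\infty$. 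Second, I will show that $\Lambda$ is the spectral bound and the growth bound. Writing $\eta$ in terms of $\psi$, the quadratic form $k^2\int\rho_s'|\psi|^2/\int(|\psi'|^2+k^2|\psi|^2)\le\Lambda$ gives $\|e^{tL}\|\le e^{\Lambda t}$ in a weighted norm equivalent to $H^{-1}$-type norms mode by mode. In $H^m$, the bound will instead be $\le C_\delta e^{(\Lambda+\delta)t}$, obtained by energy estimates with commutators.

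\emph{Nonlinear instability via Grenier's scheme.} Pick an eigenvalue $\lambda_k>\Lambda-\delta$ with $\Lambda-\delta>(\Lambda+\delta)/2$. Build an approximate solution $\eta^{app}=\varepsilon e^{\lambda_k t}\eta_k+\dots$; if needed, I will add higher-order correctors of size $\varepsilon^j e^{j\lambda_k t}$. The remainder is estimated through the semigroup bound and $H^m$ energy estimates up to the time $T_\varepsilon\sim\lambda_k^{-1}\log(1/\varepsilon)$. Its $L^2$ norm then stays $\ll\varepsilon e^{\lambda_k t}$, giving $L^2$ growth to size $O(1)$ from $H^m$-small data.

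\emph{Main obstacle for part 2.} The hard step is ensuring that the nonlinear remainder is controlled by the maximal growth rate. This needs the sharp growth bound $\Lambda+\delta$ in $H^m$, which comes from the energy estimate for $\partial_t\eta=\rho_s'\partial_1\Delta_D^{-1}\partial_1\eta$, where the top-order term is bounded by $\sup\rho_s'$. I would prove this via Gårding-type commutator bounds.
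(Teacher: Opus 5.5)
\textbf{Part 1 (stability) has a genuine gap.} It sits at the step ``I will close a bootstrap for $\|\eta\|_{H^m}\le 2\eps$''.

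Your $H^m$ inequality contains $C\|\na u\|_{L^\infty}\|\eta\|_{H^m}^2$, and $\eta$ does not decay: it converges to $\rho_0^*-\rho_s$, which is nonzero in general. So this term cannot be absorbed by the damping, since Young's inequality leaves $\|\eta\|_{H^m}^4$, which is not integrable in time. A global bound therefore needs $\int_0^\infty\|\na u\|_{L^\infty}\,dt$ to be small, and nothing in your plan gives this.

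Your Arnold-type functional gives $\int_0^\infty\|u\|_{L^2}^2\,dt\lesssim\|\eta_0\|_{L^2}^2$, and the damping gives $\int_0^T\|u\|_{H^m}^2\,dt\lesssim\eps^2$. Both are $L^2$ in time, so they only yield $\int_0^T\|\na u\|_{L^\infty}\,dt\lesssim\sqrt{T}\,\eps$, and the bootstrap fails once $T\gtrsim\eps^{-2}$.

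What is missing is a decay \emph{rate}. Up to sign and a time-independent constant, your functional equals $\int_\Omega y\rho$, so it has the correct time derivative. But it is normalized against $\rho_s$ and tends to a nonzero limit, so it cannot produce a rate. Your inequality $\|u\|_{L^2}\lesssim\|\p_1\eta\|_{H^{-1}}$ bounds $u$ by $\eta$, which is the wrong direction. The same objection applies to your claimed $t^{-m/2}$ convergence.

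\textbf{How the paper fills the gap.} It does so in two steps.

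First, it normalizes the potential energy against the measure-preserving stratification $\rho^*$. This is invariant along the flow because $\rho(t)$ is a rearrangement of $\rho_0$ (Proposition~\ref{prop:invstrat}). The paper then proves the coercivity $\mathcal E(\rho)\lesssim\|\p_1\rho\|_{L^2}^2\lesssim\|u\|_{H^1}^2$ (Proposition~\ref{prop:potentialequiv}). Interpolating against $\|u\|_{H^m}$ turns $\frac{d}{dt}\mathcal E=-\|u\|_{L^2}^2$ into $\frac{d}{dt}\mathcal E\le -C\mathcal E^{m/(m-1)}\|u\|_{H^m}^{-2/(m-1)}$. Hence $\mathcal E\lesssim t^{-m}$ and time averages of $\|u\|_{L^2}^2$ are of size $t^{-m-1}$. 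This is also the source of the $t^{-m/2}$ rate.

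Second, in the $H^m$ estimate the $u_1$-terms pair with $\p_1\eta$, which is controlled by $\|u\|_{H^m}$. So only $\|\na u_2\|_{L^\infty}\|\eta\|_{H^m}^2$ is dangerous. Interpolation alone gives $\frac{2}{t}\int_{t/2}^t\|u\|_{H^s}^2\lesssim t^{-(m+1-s)}$ for $2<s<m$. By that route $\|\na u\|_{L^\infty}$ is integrable only when $m>3$. To reach $m=3$, the paper proves the auxiliary inequalities for $\|\p_1^2\eta\|_{L^2}$ and $\|\p_1\p_2\eta\|_{L^2}$ (Proposition~\ref{prop:h2difineq}). These give $t^{-m}$ time averages of $\|u_2\|_{H^2}^2$, and hence $\int_0^\infty\|\na u_2\|_{L^\infty}\lesssim\sqrt M\eps$ (Proposition~\ref{prop:h2decay}, Lemma~\ref{lemm:nau2integ}).

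This, not the boundary commutator with $\rho_s'$, is the real obstacle in Part~1. That commutator is handled exactly as you describe: the boundary terms vanish because $\rho_s''\in H^m_e$, and the rest is interpolated against $\|u\|_{L^2}^2$. A minor correction: $H^m_e$ corresponds to odd, not even, reflection of $\eta$ across $y=\pm1$.

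\textbf{Part 2 (instability) is essentially the paper's argument.} You use the quotient $Q_k$, a maximizer by compactness, $\lambda_k\to\sup\rho_s'$ via concentrated test functions, and Grenier's scheme. There are two differences worth noting.

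The paper gets $\omega_0(L)=\sup\rho_s'$ abstractly. Since $L$ is bounded, its growth bound equals its spectral bound. Moreover, $\lambda-L$ is invertible for $\RE\lambda>\sup\rho_s'$ by coercivity of $-(1-\rho_s'/\lambda)\p_1^2-\p_2^2$. Your weighted bound, i.e.\ $\frac{d}{dt}\|\na\psi\|_{L^2}^2=2\int\rho_s'|\p_1\psi|^2$ with $\psi=\Delta_D^{-1}\p_1\eta$, is also correct. It lifts to $H^m$ by an elementary induction, with no G{\aa}rding calculus needed. The induction uses $\p_t\eta=\rho_s'\p_1\psi$, $\p_1\psi=\Delta_D^{-1}\p_1^2\eta$, and the fact that $\p_1$ commutes with $e^{tL}$.

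Also, the remainder $v$ needs no sharp energy estimate. The paper uses the crude constant $C_m$ and takes $N$ with $(N+1)\lambda>2C_m$. The sharp growth bound enters only through the correctors $\eta_j$.
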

\begin{theo}[Infinite cylinder, informal version]\label{theo:intro:2}
Let $\Omega=\T\times \Rr$ and $2<m\in \Rr$. All steady states $\rho_s$ satisfying $\rho'_s\in W^{\infty, \infty}(\Rr)$ and
\bq\label{cd:cylinder:intro}
\sup_\Rr\rho_s' >\max\Big\{0, \limsup_{|y|\to \infty}\rho_s'(y)\Big\}
\eq
are nonlinearly unstable in $H^m(\Omega)$. 
\end{theo}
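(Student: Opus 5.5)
The plan is to follow the classical Grenier-type scheme: spectral instability of $L$, a sharp semigroup bound, and a bootstrap for the nonlinear equation \eqref{eq: transport for eta}. Set $\Lambda:=\sup_\Rr\rho_s'>0$. The first step is to construct positive eigenvalues of $L=\rho_s'\p_1\Delta^{-1}\p_1$. For a Fourier mode $\eta=e^{ikx}\phi(y)$ with stream function $\psi=e^{ikx}\Psi(y)$, the eigenvalue problem $L\eta=\lambda\eta$, $\Delta\psi=\p_1\eta$, becomes
\[
\lambda(\Psi''-k^2\Psi)=-k^2\rho_s'(y)\Psi ,
\]
that is, $-\Psi''+k^2\bigl(1-\rho_s'/\lambda\bigr)\Psi=0$. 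For fixed $\lambda\in(0,\Lambda)$ this is a Schrödinger operator $-\p_y^2+k^2V_\lambda$ whose potential is negative on the set $\{\rho_s'>\lambda\}$. The gap condition \eqref{cd:cylinder:intro} gives an interval $I$ with $\rho_s'>\lambda$ on $I$ when $\lambda$ lies between $\limsup_{|y|\to\infty}\rho_s'$ and $\Lambda$, while $V_\lambda\to k^2(1-\limsup\rho_s'/\lambda)>0$ at infinity. Hence the essential spectrum of $-\p_y^2+k^2V_\lambda$ sits above a positive level. Rescaling by $k$ (a semiclassical limit $h=1/k$), a test function concentrated on $I$ shows that the bottom of the spectrum is negative for $k$ large. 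I would then choose $\lambda=\lambda_k$ by a continuity/monotonicity argument in $\lambda$: the lowest eigenvalue $\mu(\lambda)$ of the Schrödinger operator is increasing in $\lambda$ and tends to a positive value as $\lambda\uparrow\Lambda$, so we pick $\lambda_k$ with $\mu(\lambda_k)=0$. This gives a decaying smooth $\Psi_k$, hence an eigenfunction $\eta_k=\Delta\psi_k/(ik)\in H^\infty$. Agmon-type estimates show $\lambda_k\to\Lambda$ as $k\to\infty$.

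The second step is to show that the growth bound of $e^{tL}$ on $H^m$ is at most $\Lambda$ (in fact $\Lambda+\epsilon$, which suffices). In $L^2$ this follows from $L=\rho_s'\,\cdot\,R$, where $R=\p_1\Delta^{-1}\p_1$ is a nonpositive contraction. I would work in a weighted inner product: since $(\rho_s')^{-1}$ is not available, the approach is to write $e^{tL}$ through the symmetrization $R^{1/2}\rho_s'R^{1/2}$ on the range of $R$, whose norm is at most $\Lambda$, and then propagate to $H^m$ by commuting derivatives. Derivatives in $x$ commute with $L$. Derivatives in $y$ produce lower-order terms with coefficients $\rho_s^{(j)}$, which are handled by an induction with slightly larger exponents $\Lambda+\epsilon$. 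Combined with the first step, this also identifies the spectral bound with $\Lambda$.

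The third step is the nonlinear bootstrap. Pick $k$ with $\lambda_k>\Lambda/2+\epsilon$, so that $2\lambda_k>\Lambda+\epsilon$; this is exactly why eigenvalues converging to $\Lambda$ are needed. Take initial data $\eta_0=\delta\eta_k$ and write $\eta=\delta e^{\lambda_k t}\eta_k+w$ with Duhamel
\[
w(t)=-\int_0^t e^{(t-s)L}(u\cdot\na\eta)(s)\,ds .
\]
Energy estimates in $H^m$ for \eqref{eq: transport for eta}, using that $u$ is a zeroth-order operator on $\eta$ and $m>2$, give $\|\eta(t)\|_{H^m}\lesssim \delta e^{\lambda_k t}$ up to the escape time $T_\delta\sim\lambda_k^{-1}\log(\epsilon_0/\delta)$. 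The one-derivative loss in $u\cdot\na\eta$ is avoided by estimating $w$ only in $L^2$, where $\|u\cdot\na\eta\|_{L^2}\lesssim\|\eta\|_{H^m}^2$. This gives $\|w(t)\|_{L^2}\lesssim\delta^2e^{2\lambda_k t}$, which is small relative to $\delta e^{\lambda_k t}\|\eta_k\|_{L^2}$ at $t=T_\delta$. Consequently $\|\eta(T_\delta)\|_{L^2}\ge c\epsilon_0$ while $\|\eta_0\|_{H^m}=O(\delta)$, which is instability, even measured in $L^2$.

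I expect the main obstacle to be the $H^m$ energy estimate growing at rate at most $\lambda_k$ (rather than a larger constant times $\|\rho_s'\|_{W^{m,\infty}}$), since derivative commutators with $\rho_s'$ can lose growth. My first attempt would be to close the bootstrap using the $L^2$ linear bound with rate $\Lambda+\epsilon<2\lambda_k$, while controlling $\|\eta\|_{H^m}$ only crudely, by $e^{Ct}\delta$, and interpolating. If this mismatch persists, I would fall back on choosing approximate (WKB) eigenfunctions and a Grenier-style higher-order approximate solution, so that the error term starts at an arbitrarily high power of $\delta e^{\lambda_k t}$.
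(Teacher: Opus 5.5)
Your Step 1 is the paper's variational argument rephrased through Schr\"odinger operators, and your Step 2 is a valid alternative to the paper's semigroup argument. The nonlinear step as you primarily state it, however, has a genuine gap. The repair is exactly the fallback you mention last, and that fallback is what the paper does.

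\textbf{The gap.} The bound $\|\eta(t)\|_{H^m}\lesssim\delta e^{\lambda_k t}$ cannot come from energy estimates. An energy inequality $\frac{d}{dt}\|\eta\|_X^2\le 2\omega\|\eta\|_X^2$ for the linear part holds for all data, so it forces $\omega\ge\omega_0(L)\ge\sup_j\lambda_j=\Lambda>\lambda_k$. In practice the energy method gives a rate $C_m$ depending on $\rho_s'$, through $\|L\|_{\mathcal{L}(H^m)}$ and the commutators $[\partial^\alpha,\rho_s']$. This rate may be much larger than $2\lambda_k$.

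Your first remedy does not help. The crude bound $\delta e^{C_m t}$ stays below $1$ only up to $t\approx C_m^{-1}\log(1/\delta)$. When $C_m>\lambda_k$, this time precedes $T_\delta=\lambda_k^{-1}\log(\epsilon_0/\delta)$, and the leading term $\delta e^{\lambda_k t}$ is then only of size $\delta^{1-\lambda_k/C_m}\to 0$. After that time you control neither $\|u\cdot\nabla\eta\|_{L^2}$ nor existence in $H^m$.

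In fact the single-term ansatz fails even with the best conceivable data-independent bound $\|\eta(s)\|_{H^m}\lesssim\delta e^{(\Lambda+\epsilon)s}$. Your Duhamel term then becomes $\delta^2 e^{2(\Lambda+\epsilon)t}$, which overtakes $\delta e^{\lambda_k t}$ before $T_\delta$ because $\lambda_k<\Lambda$. What is missing is a way to propagate the data-specific rate $\lambda_k$ in the high norm.

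\textbf{The fix.} Grenier's expansion supplies this, and you already have all its ingredients. No WKB is needed, since Step 1 gives an exact eigenfunction $w_k\in H^\infty$.
Set $\eta^a=\sum_{j=1}^N\delta^j\eta_j$, with $\eta_1=e^{\lambda_k t}w_k$ and $\partial_t\eta_j-L\eta_j=-\sum_{i=1}^{j-1}Q(\eta_i,\eta_{j-i})$, $\eta_j(0)=0$, where $Q(f,g)=\nabla^\perp\Delta^{-1}\partial_1 f\cdot\nabla g$.
Your Step 2 bound $\|e^{tL}\|_{\mathcal{L}(H^n)}\le C_{n,\epsilon}e^{(\Lambda+\epsilon)t}$ holds for all integers $n$, with $\Lambda+\epsilon<2\lambda_k$. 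Inductively it gives $\|\eta_j(t)\|_{H^n}\le C_{j,n}e^{j\lambda_k t}$; the derivative lost in each $Q$ is harmless because $w_k\in H^\infty$. Hence the residual is $O(\delta^{N+1}e^{(N+1)\lambda_k t})$ in $H^m$.

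The error $v=\eta-\eta^a$ is then estimated in $H^m$, not in $L^2$, by the energy method with its crude rate. The term $Q(v+\eta^a,v)$ contributes only a commutator (Kato--Ponce for non-integer $m$), and $Q(v,\eta^a)$ costs $\|\eta^a\|_{H^{m+1}}$. While $\|v\|_{H^m}\le\frac12$, this gives $\frac{d}{dt}\|v\|_{H^m}\le C_m'\|v\|_{H^m}+C\delta^{N+1}e^{(N+1)\lambda_k t}$.
Choose $N$ with $(N+1)\lambda_k>C_m'$. Then $\|v(t)\|_{H^m}\lesssim(\delta e^{\lambda_k t})^{N+1}$ up to the time when $\delta e^{\lambda_k t}=\theta$, and so $\|\eta(T_\delta)\|_{L^2}\ge\theta\|w_k\|_{L^2}-C\theta^2$. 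The crude rate is compared only with $(N+1)\lambda_k$, never with $\lambda_k$. This is precisely the paper's argument.

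\textbf{Smaller points.} In Step 1, $\mu(\lambda)$ need not be monotone, because $V_\lambda=1-\rho_s'/\lambda$ decreases in $\lambda$ wherever $\rho_s'<0$. Use instead $\tilde\mu(\lambda):=\inf\sigma\bigl(\lambda(-\partial_y^2+k^2)-k^2\rho_s'\bigr)=\lambda\mu(\lambda)$. It satisfies $\tilde\mu(\lambda_2)-\tilde\mu(\lambda_1)\ge k^2(\lambda_2-\lambda_1)$ and is negative exactly when $\lambda<\sup Q_k$. Its zero is the paper's $\lambda_k=\max Q_k$, and your essential-spectrum bound replaces the paper's no-loss-of-mass argument. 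The limit $\lambda_k\to\Lambda$ follows from your test functions alone; no Agmon estimates are needed.

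In Step 2, $R=\partial_1\Delta^{-1}\partial_1$ is nonnegative, not nonpositive (its symbol is $k^2/(k^2+\xi^2)$). This is what makes $R^{1/2}$ and $S=R^{1/2}\rho_s'R^{1/2}\le\Lambda$ meaningful. From $R^{1/2}e^{tL}=e^{tS}R^{1/2}$ you return to $e^{tL}$ via $e^{tL}f=f+\rho_s'\int_0^t R^{1/2}e^{sS}R^{1/2}f\,ds$. The commutator $[\partial_y,L]=\rho_s''R$ then adds only polynomial factors in $t$.

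Compared with the paper's route (eigenvalues are real and below $\Lambda$, $\lambda-L$ is coercive for $\mathrm{Re}\,\lambda>\Lambda$, and $s(A)=\omega_0(A)$ for bounded generators), yours is more explicit. It does rely on $[\partial_y,R]=0$, which fails in the Dirichlet channel, whereas the paper's argument covers both domains at once.
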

We refer to Theorem \ref{thm:instability} and Theorem \ref{theo:stability} below for the precise instability and stability results, respectively. 

For the periodic channel, the stability statement in Theorem \ref{theo:intro:1} is an extension of the ones in \cite{BJPW, CCL, Park}, in the regime  of integer-order regularity,  to a domain with boundary and a large class of  uniformly decreasing steady states. Our proof of the stability result exploits the interplay between the potential-energy approach in \cite{BJPW} and the boundary-compatibility mechanism identified in \cite{CCL} for linear steady states. This interplay is enabled by requiring   normal derivatives of positive even order of the steady state $\rho_s$ to vanish there (i.e. $\rho_s''\in H^m_e$). Indeed, for the channel,  the potential-energy argument alone does not control the boundary terms arising in the top-order Sobolev estimates. We overcome this difficulty by exploiting a compatible hierarchy of vanishing even-order normal derivatives, which is preserved by the IPM evolution and yields the necessary cancellations in repeated boundary integrations. The restriction $m>2$ agrees with the Sobolev threshold for local well-posedness of IPM. We use integer-order regularity because  the compatibility conditions and repeated boundary integrations are formulated in terms of classical normal derivatives. It remains  open whether the stability result for the periodic channel holds for  all real $m>2$, as in the boundaryless cylinder \cite{BJPW}.

Regarding instability,  Kiselev and Yao~\cite[Theorem 1.5]{KisYao} showed that every smooth stratified steady state in the periodic channel is nonlinearly unstable under perturbations small in the rough topology $H^{2-\gamma}$, $\gamma>0$. Their mechanism produces infinite-time growth of Sobolev norms, provided the solution remains globally smooth. In contrast, our result identifies profiles that are nonlinearly unstable in the locally well-posed regime $H^m$, $m>2$, and yields departure from equilibrium on the logarithmic time scale $T_\delta\sim |\log\delta|$. Moreover, in view of Theorem \ref{theo:intro:1} 1), the instability condition $\sup\rho_s'>0$ is sharp, apart from the borderline case $\sup \rho_s'=0$, which is interesting and deserves  separate studies.  Our instability result also has a spectral interpretation. We construct a strictly increasing sequence of positive eigenvalues  $\ld_k$ converging to 
$\sup\rho_s'$. From this and the boundedness of the linearized operator, we deduce that the spectral bound and growth bound of the linearized evolution are both equal to $\sup\rho_s'$. In particular, the unstable growth rate is determined directly by the most unstable portion of the vertical density profile.

For the infinite cylinder $\Omega=\T\times \Rr$, we recall that \cite{BJPW} proved stability of steady states $\rho'_s\in C^{m+1}(\T\times \Rr)$, $2<m\in \Rr$,  that satisfy  $\sup_\Rr\rho_s'<0$. Our instability condition \eqref{cd:cylinder:intro} consists of  the condition $\sup \rho_s'>0$ and  
\[
\limsup_{|y|\to \infty}\rho_s'(y)<\sup_\Rr\rho_s',
\]
where the latter  requires the largest positive slope of the steady density profile to occur in a bounded region rather than being approached only at spatial infinity. It therefore separates the strongest unstable stratification from the far-field behavior. At the spectral level, this strict separation places the corresponding variational levels above the essential spectrum, allowing them to be realized as discrete eigenvalues with spatially localized eigenfunctions. We refer to the proof of Theorem \ref{theo:linins:2} and Remark \ref{rema:contspec} for further details.


\section{Instability in periodic channel and infinite cylinder}
We consider the following class of steady states:
 \bq\label{def:cS}
\mathcal{U}:=\left\{\rho_s: (-1, 1)\to \Rr:  \rho_s'\in W^{\infty, \infty}((-1, 1)), \sup_{(-1, 1)}\rho_s'>0\right\} \quad\text{when~} \Omega=\T\times (-1, 1),
\eq
 \bq\label{def:cS:2}
\mathcal{U}:=\left\{\rho_s: \Rr\to \Rr:  \rho_s'\in W^{\infty, \infty}(\Rr),  \sup_\Rr\rho_s' >\max\Big\{0, \limsup_{|y|\to \infty}\rho_s'(y)\Big\}\right\}\quad\text{when~} \Omega=\T\times \Rr.
\eq
Our main result in this section asserts that all steady states in $\mathcal{U}$ are nonlinearly unstable in Sobolev spaces. 
\begin{theo}[Nonlinear instability]\label{thm:instability}
    Let   $\rho_s \in \mathcal{U}$. The linearized operator $L$ \eqref{def:L} has an eigenvalue $\ld>0$  with an associated eigenfunction $w\in H^\infty(\Omega)$ such that the following holds.  For any integer $m>2$, there exists a constant $\nu>0$ such that for any $\delta<\frac{2\nu}{\| w\|_{L^2}}$,  \eqref{eq: transport for eta} with initial data $\eta_0=\delta w$  has a unique solution $\eta \in C([0, T_\delta], H^m(\Omega))$, $T_\delta= \frac{1}{\ld }\ln \frac{2\tt}{\delta \| w\|_{L^2}}$, which satisfies $ \| \eta(T_\delta)\|_{L^2(\Omega)}\ge \nu$. Moreover, for $\Omega=\T\times \Rr$, $m$ can be taken in $(2, \infty)$. 
\end{theo}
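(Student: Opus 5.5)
The argument splits into a linear part (find a growing mode) and a nonlinear part (show the nonlinear solution follows that mode until it reaches size $\nu$), organized along the lines of the Grenier / Guo--Strauss bootstrap.

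\emph{Linear instability.} Since $L$ commutes with $\p_1$, I restrict to the first horizontal Fourier mode $\eta = e^{ix}f(y)$. With $\psi = \Delta_D^{-1}\p_1\eta$ we get $(\p_y^2-1)\psi = if$, and the eigenvalue equation $L\eta=\ld\eta$ turns into the Sturm--Liouville problem
\[
-\psi''+\psi = \frac{\rho_s'}{\ld}\psi,
\]
with Dirichlet conditions at $y=\pm1$ in the channel, or $\psi\in H^1(\Rr)$ on the cylinder. Equivalently, $\ld>0$ is an eigenvalue precisely when $\ld$ is a critical value of the Rayleigh quotient
\[
R(\psi)=\frac{\int \rho_s'|\psi|^2}{\int (|\psi'|^2+|\psi|^2)}.
\]
On the channel the embedding $H^1_0\hookrightarrow L^2$ is compact, so the supremum of $R$ over the $k$-th mode, $k\ge1$, is attained. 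Testing $R$ with a bump concentrated where $\rho_s'>0$ shows this supremum is positive. Letting the mode $k\to0$ would push the levels toward $\sup\rho_s'$, but a single positive eigenvalue is all this theorem needs.

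On the cylinder I would use the gap condition \eqref{cd:cylinder:intro}. For $k$ small, taking wide test functions centered near a point where $\rho_s'$ is close to $\sup\rho_s'$ shows that the supremum of $R$ exceeds $\limsup_{|y|\to\infty}\rho_s'$. The essential spectrum of the relevant operator sits below that threshold, so a concentration-compactness or tail argument recovers compactness of maximizing sequences and the maximum is attained. I expect this compactness step to be the main obstacle.

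\emph{Regularity and growth bound.} Elliptic regularity gives smoothness: since $\rho_s'\in W^{\infty,\infty}$, bootstrapping yields $\psi\in H^\infty$, and then $w=\ld^{-1}\rho_s'\p_1\psi$ is in $H^\infty$. Take $\ld$ to be the largest eigenvalue found. Because $L$ is bounded on $L^2$ with spectral radius at most $\sup\rho_s'$, I choose $\ld$ close enough to $\sup\rho_s'$ that
\[
\|e^{tL}\|_{L^2\to L^2}\le C e^{\Lambda t},\qquad \Lambda<2\ld .
\]
If obtaining $\Lambda<2\ld$ is difficult, I would avoid it by using only the crude bound $\|e^{tL}\|\le e^{\|L\|t}$ and refining through the Guo--Strauss higher-order approximate solution. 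The simplest route is the first: an eigenvalue sequence with $\ld_k\to\sup\rho_s'$, which also equals the growth bound since $\|L\|_{L^2}$ is controlled by $\sup|\rho_s'|$ in the $L^2$ sense with the positive part.

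\emph{Nonlinear bootstrap.} Write $\eta=\delta e^{\ld t}w+\eta_r$ and use Duhamel:
\[
\eta_r(t)=-\int_0^t e^{(t-s)L}\,(u\cdot\na\eta)(s)\,ds .
\]
Let $T^*$ be the largest time up to which $\|\eta(t)\|_{H^m}\le 2\delta e^{\ld t}\|w\|_{H^m}$. Local well-posedness in $H^m$ gives $T^*>0$. The $H^m$ energy estimate, using commutators and the boundedness of the zeroth-order Biot--Savart operator (including the boundary compatibility of $w$ for the channel), reads
\[
\frac{d}{dt}\|\eta\|_{H^m}^2\le C\|\eta\|_{H^m}^2+C\|\eta\|_{H^m}^3 .
\]
This only gives growth at rate $C$, possibly larger than $\ld$. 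So I would interpolate, controlling $\|\eta\|_{H^m}$ by its $L^2$ norm together with the energy inequality, in the style of Grenier / Guo--Strauss.

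The key estimate is the $L^2$ bound on the Duhamel term:
\[
\|\eta_r(t)\|_{L^2}\le C\int_0^t e^{\Lambda(t-s)}\|u\|_{L^\infty}\|\na\eta\|_{L^2}\,ds\lesssim \delta^2 e^{2\ld t},
\]
which uses $\Lambda<2\ld$. For $t\le T_\delta$ this is at most a fixed fraction of $\delta e^{\ld t}\|w\|_{L^2}$ once $\theta$ is chosen small. That closes the bootstrap and gives
\[
\|\eta(T_\delta)\|_{L^2}\ge \delta e^{\ld T_\delta}\|w\|_{L^2}-C\theta^2=2\theta-C\theta^2\ge\theta=:\nu .
\]
The admissible $m$ is an integer $m>2$ on the channel, where boundary terms require integration by parts, and any real $m>2$ on the cylinder.
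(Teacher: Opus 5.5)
\paragraph{Linear step.} You have the frequency asymptotics backwards, and this matters. For the mode $e^{ikx}$ the eigenvalue is $Q_k[\chi]=\int\rho_s'|\chi|^2/\int(|\chi|^2+k^{-2}|\chi'|^2)$. So the levels increase to $\sup\rho_s'$ as $k\to\infty$, using test functions \emph{localized} near a near-maximum of $\rho_s'$. On $\mathbb{T}$ there is no limit $k\to 0$, and wide test functions only see averages of $\rho_s'$.

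On $\mathbb{T}\times\mathbb{R}$ this breaks your existence argument, because the gap condition does not force $\sup Q_1>\limsup_{|y|\to\infty}\rho_s'$. For example, let $\rho_s'\to a>0$ at infinity, with a spike of width $\epsilon$ above $a$ sitting inside a long dip below $a$. For small $\epsilon$ the gradient cost rules out any mode-one level above $a$. This is why the paper gets compactness of maximizing sequences only for $k\ge k_*$.

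In both domains you also need an eigenvalue with $\lambda>\omega_0(L)/2$, which a fixed low mode need not provide. What is required is the whole sequence $\lambda_k\uparrow\sup\rho_s'$, together with $\omega_0(L)=s(L)=\sup\rho_s'$ on every $H^m$. The latter is not a spectral-radius bound; that bound is false when $\inf\rho_s'<-\sup\rho_s'$. The paper proves $\sigma(L)\subset\{\mathrm{Re}\,\lambda\le\sup\rho_s'\}$ from the coercivity of $-(1-\rho_s'/\lambda)\partial_1^2-\partial_2^2$, and then uses $s=\omega_0$ for bounded generators.

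\paragraph{Nonlinear bootstrap.} The bootstrap does not close. Your hypothesis is an $H^m$ bound of size $\delta e^{\lambda t}$, but the Duhamel formula can only improve the $L^2$ norm, since in $H^m$ the term $u\cdot\nabla\eta$ loses a derivative. No energy inequality can propagate $H^m$ growth at rate $\lambda$. An inequality $\frac{d}{dt}\|\eta\|_{H^m}^2\le 2\mu\|\eta\|_{H^m}^2+(\text{nonlinear})$, applied to the linearized flow, forces $\mu\ge\omega_0(L)=\sup\rho_s'$. Every eigenvalue is strictly below $\sup\rho_s'$, so there is also no ``largest eigenvalue'' to choose. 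Interpolating a rate-$\lambda$ low norm against a rate-$\mu$ high norm makes the quadratic term $\delta^2e^{\gamma t}$ with $\gamma>2\lambda$. At $T_\delta$ this has size of order $\delta^{2-\gamma/\lambda}$, which is unbounded as $\delta\to 0$.

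The missing idea is Grenier's high-order approximation, as used in the paper. Set $\eta^a=\sum_{j=1}^N\delta^j\eta_j$ with $\eta_1=e^{\lambda t}w$, and for $j\ge 2$ let $\partial_t\eta_j-L\eta_j=-\sum_{k=1}^{j-1}Q(\eta_k,\eta_{j-k})$ with $\eta_j(0)=0$. Since $2\lambda>\omega_0(L)$ on $H^n$, one gets $\|\eta_j\|_{H^n}\lesssim e^{j\lambda t}$ and a residual of size $\delta^{N+1}e^{(N+1)\lambda t}$. The error $v=\eta-\eta^a$ then needs only the crude $H^m$ energy inequality with rate $2C_m$. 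Choosing $N$ with $(N+1)\lambda>2C_m$ gives $\|v\|_{H^m}\lesssim\theta^{N+1}$ up to $T_\delta$.

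Your fallback using $\|e^{tL}\|\le e^{\|L\|t}$ does not rescue the argument. The iterates grow like $e^{j\lambda t}$ only when $j\lambda$ exceeds the true growth bound, so the sharp bound $\omega_0(L)<2\lambda$ in $H^n$ is indispensable.
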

\begin{rema}
The assumption that $\rho_s\in W^{\infty, \infty}$ in \eqref{def:cS} and \eqref{def:cS:2} is merely for convenience. For each integer $m>2$, it suffices to assume $\rho_s\in W^{K, \infty}$ for some $K>0$ depending only on $m$ and $\rho_s'$. 
\end{rema}
For the proof of Theorem \ref{thm:instability}, we first study the linearized problem for any  steady state $\rho_s\in \mathcal{U}$. We will prove that the spectrum of the  linearized operator $L$ is  contained in $\{\RE{\ld}\le \sup\rho_s'\}$ and there is a strictly increasing sequence of positive eigenvalues converging to the spectral edge $\sup\rho_s'$. By leveraging the boundedness of $L$, we deduce that both the spectral bound and the semigroup growth bound for $L$ equal $\sup \rho_s'$. With the above sharp spectral instability and semigroup bound, the passage to nonlinear instability will be achieved by using Grenier's iterative scheme \cite{Grenier}. 

\subsection{Spectral instability} We fix an arbitrary $\rho_s\in \mathcal{U}$ and recall that the linearized operator is $L=\rho_s'(y) \partial_1 \Delta_D^{-1}\partial_1$. When $\Omega=\T\times \Rr$,  $\partial_1 \Delta_D^{-1}\partial_1$ is the Fourier multiplier
\bq\label{Fourier:L}
\widehat{ \partial_1 \Delta_D^{-1}\partial_1w}(k, \xi)=
\begin{cases} \frac{k^2}{k^2+\xi^2}\hat{w}(k, \xi)\quad\text{if~}k\ne 0,\\
0\quad\text{if~} k=0.
\end{cases}
\eq
For either domain, we have
\bq\label{L:boundedness}
L\in \mathcal{L}(H^m(\Omega),  H^m(\Omega)) \quad\forall m \in [0, \infty)
\eq
and 
\bq\label{mean:Lw}
\int_{\T}Lw(x, \cdot)dx=0\quad\forall w\in L^2(\Omega).
\eq	
We note that for $\Omega=\T\times (-1, 1)$, \eqref{L:boundedness} follows by interpolation between integers $m$. 

For  $\Omega=\T\times (-1, 1)$, since $\Delta_D^{-1}$ is well-defined on $H^{-1}(\Omega)\supset L^2(\Omega)$, we have 
\bq\label{commuteL:1}
Lw=\rho_s'\p_1^2\Delta_D^{-1}w\quad\forall w\in L^2(\T\times (-1, 1)).
\eq
On the other hand, for $\Omega=\T\times \Rr$ and $w\in L^2(\Omega)$ satisfying 
\bq\label{cd:meanx}
\int_\T w(x, \cdot)dx=0,
\eq
 $\Delta^{-1}w$ is well-defined in $H^2(\Omega)$ by  
\bq\label{Fourier:inverseLaplace}
\widehat{\Delta^{-1}w}(k, \xi)=
\begin{cases} \frac{-1}{k^2+\xi^2}\hat{w}(k, \xi)\quad\text{if~}k\ne 0,\\
0\quad\text{if~} k=0.
\end{cases}
\eq
In view of this and \eqref{Fourier:L}, we deduce 
\bq\label{commuteL:2}
Lw=\rho_s'\p_1^2\Delta_D^{-1}w\quad\forall w\in L^2(\T\times \Rr)~\text{satisfying~}\eqref{cd:meanx}.
\eq
We first establish the spectral instability by proving that $L$ has positive eigenvalues. 
\begin{theo}\label{theo: linear instability}
Consider  $\Omega=\T\times (-1, 1)$. 
$L$ has a strictly increasing sequence of positive eigenvalues $\{ \ld_k\}_{k=1}^\infty$ converging to $\sup_{(-1, 1)}\rho_s'$. Moreover, for each $k\ge 1$, $\ld_k$ has a real eigenfunction $w_k(x, y)=\cos(kx)\tilde{w}_k(y)\in H^\infty(\Omega)$. 
\end{theo}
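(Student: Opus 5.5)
The plan is to separate variables and reduce the eigenvalue problem to a one-dimensional generalized Sturm--Liouville problem with Dirichlet conditions, which can then be handled variationally. Fix $k\ge 1$ and look for $w=\cos(kx)W(y)$ with stream function $\Delta_D^{-1}w=\cos(kx)\varphi(y)$. The latter means $\varphi''-k^2\varphi=W$ on $(-1,1)$ with $\varphi(\pm1)=0$. By \eqref{commuteL:1}, $Lw=\rho_s'\partial_1^2\Delta_D^{-1}w=-k^2\rho_s'\varphi\cos(kx)$. Hence $Lw=\lambda w$ with $\lambda\neq 0$ is equivalent to
\[
-\varphi''+k^2\varphi=\frac{k^2}{\lambda}\rho_s'\varphi,\qquad \varphi\in H^1_0((-1,1)).
\]
The largest positive $\lambda$ for this problem should be given by the Rayleigh quotient
\[
\lambda_k:=\sup_{0\ne\varphi\in H^1_0}\frac{k^2\int\rho_s'\varphi^2}{\int(|\varphi'|^2+k^2\varphi^2)}=\sup_{0\ne\varphi\in H^1_0}\frac{\int\rho_s'\varphi^2}{k^{-2}\int|\varphi'|^2+\int\varphi^2}.
\]

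\textbf{Existence and positivity.} Since $\sup\rho_s'>0$ and $\rho_s'$ is continuous, $\rho_s'>0$ on some interval. Any $\varphi$ supported there gives a positive quotient, so $\lambda_k>0$. To obtain a maximizer, take a maximizing sequence normalized by $\int(|\varphi_n'|^2+k^2\varphi_n^2)=1$. It is bounded in $H^1_0$, so up to a subsequence it converges weakly in $H^1_0$ and strongly in $L^2$ by Rellich. The numerator is continuous under this convergence, since $\rho_s'\in L^\infty$, and the denominator is weakly lower semicontinuous. The limit $\varphi_k$ is nonzero, because its numerator equals $\lambda_k>0$, and it attains the supremum. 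The Euler--Lagrange equation is exactly the ODE above with $\lambda=\lambda_k$.

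\textbf{Regularity.} Write the equation as $\varphi_k''=k^2(1-\rho_s'/\lambda_k)\varphi_k$. Because $\rho_s'\in W^{\infty,\infty}$, bootstrapping gives $\varphi_k\in C^\infty([-1,1])$. Set $\tilde w_k:=\varphi_k''-k^2\varphi_k=-k^2\lambda_k^{-1}\rho_s'\varphi_k$, which is smooth. Then $w_k=\cos(kx)\tilde w_k\in H^\infty(\Omega)$ is real. Moreover $\Delta_D^{-1}w_k=\cos(kx)\varphi_k$ by uniqueness, since $\varphi_k(\pm1)=0$. Therefore $Lw_k=\lambda_kw_k$, and $w_k\neq0$ because otherwise $\varphi_k=0$.

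\textbf{Monotonicity and the limit.} For fixed $\varphi$, the second form of the quotient is nondecreasing in $k$. Evaluating the $(k+1)$-quotient at $\varphi_k$, and using $\int|\varphi_k'|^2>0$ and the positive numerator, gives $\lambda_{k+1}>\lambda_k$ strictly. The same observation yields $\lambda_k<\int\rho_s'\varphi_k^2/\int\varphi_k^2\le\sup\rho_s'$. For the limit, fix $\varepsilon>0$. Choose $y_0$ with $\rho_s'>\sup\rho_s'-\varepsilon$ on a neighborhood, and a fixed nonzero $\varphi\in C_c^\infty$ supported there. Then $\liminf_k\lambda_k\ge\int\rho_s'\varphi^2/\int\varphi^2\ge\sup\rho_s'-\varepsilon$. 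Hence $\lambda_k\uparrow\sup\rho_s'$.

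\textbf{Main obstacle.} I expect the delicate step to be the existence of a maximizer and the identification of $\Delta_D^{-1}w_k$ with $\cos(kx)\varphi_k$, which is what legitimately converts the Euler--Lagrange ODE into $Lw_k=\lambda_kw_k$. The compactness here is straightforward on the bounded interval. The same argument would fail on $\mathbb{R}$, which is why the cylinder case needs the gap condition.
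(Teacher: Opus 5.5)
Your proposal is correct and follows essentially the same route as the paper: separation of variables with $\Delta_D^{-1}w=\cos(kx)\varphi(y)$, maximization of the same quotient $Q_k$ over $H^1_0((-1,1))$ by the direct method, the Euler--Lagrange equation plus bootstrap regularity, strict monotonicity by evaluating $Q_{k+1}$ at the $k$-th maximizer, and the limit via test functions localized where $\rho_s'$ is nearly maximal. The differences are cosmetic: you differentiate the quotient directly instead of using the implicit function theorem, and you work with $\cos(kx)$ instead of taking real parts of $e^{ikx}$. One small point: $Q_k[\varphi]$ is nondecreasing in $k$ for fixed $\varphi$ only when $\int\rho_s'\varphi^2\ge 0$, but that is exactly the case you use.
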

\begin{proof}
{\bf 1.} For $\Omega=\T\times (-1, 1)$, we have  
    $L = \rho_s'(y)\partial_1^2\Delta_D^{-1}$ by \eqref{commuteL:1}. If $w$ is an eigenfunction for an eigenvalue $\lambda$ of $L$, then upon setting $u = \Delta^{-1}_D w$, we have
    \bq\label{eq: u}
        \rho_s'(y)\partial_1^2 u = \lambda \Delta u.
    \eq
 In this step, we show that for any horizontal mode $k\in \Zz\setminus\{0\}$, there exists $\lambda=\ld_k>0$ and a solution to \eqref{eq: u} in the form of $u = e^{ikx} \chi(y)$, where $\chi$ is real-valued and $\chi\vert_{y=\pm 1} = 0$.   In this form, \eqref{eq: u} is equivalent to seeking $k$ and $\chi$ satisfying
    \bq\label{eq: chi}
        -\rho_s'(y)k^2\chi(y) = \lambda (-k^2 + \partial_2^2)\chi(y).
    \eq
After  rearranging  terms in \eqref{eq: chi}, multiplying by $\chi$, and integrating  by parts, we find
    \bq\label{eq: chi integral}
        \int_{-1}^1k^2(\rho_s'(y)- \lambda)|\chi(y)|^2 - \lambda|\chi'(y)|^2 dy = 0.
    \eq
    Taking the imaginary part of the above gives
\[
        \IM(\lambda)\int_{-1}^1 k^2 |\chi(y)|^2 + |\chi'(y)|^2 dy = 0.
 \]
Since the preceding integral can only vanish when $\chi\equiv 0$,  any possible eigenvalue $\ld$ corresponding to the eigenfunction $\Delta(e^{ikx}\chi(y))$ must be real.  On the other hand,  \eqref{eq: chi integral} implies that such $\ld$ must be given by
    \bq\label{form:ld:quotient}
        \lambda = \ld_k= \frac{\int_{-1}^1 \rho_s'(y)|\chi(y)|^2 dy}{\int_{-1}^1 |\chi(y)|^2 + k^{-2}|\chi'(y)|^2 dy}=:Q_k[\chi],\quad k\in \Zz\setminus\{0\}.
    \eq
 This formula implies that $\ld$, if exists, must be smaller than $\sup \rho_s'$.  For the existence of $\ld$, \eqref{form:ld:quotient} suggests considering, for any given $k\in \Zz\setminus\{0\}$, the maximization problem 
 \bq\label{max:Q}
 \max_{\chi \in H^1_0((-1, 1))\setminus\{0\}}Q_k[\chi]. 
 \eq
By homogeneity, \eqref{max:Q} is equivalent to $\max_{\mathcal{A}_k}I[\chi]$,  where 
    \bq\label{def: energy I}
       I[\chi] := \int_{-1}^1 \rho_s'(y)|\chi(y)|^2dy,\quad  J_k[\chi]:=\int_{-1}^1 |\chi(y)|^2 +k^{-2} |\chi'(y)|^2 dy = 1,
    \eq
  and  $\mathcal{A}_k=\{\chi\in H^1_0((-1, 1)): J_k[\chi]=1\}$. 

 Since $\sup_{(-1, 1)}\rho_s'>0$, there exists an open interval $U\subset (-1, 1)$ such that $\rho_s'>0$ on $U$. Choosing a function $0\neq \chi\in C^\infty(U)$, we see that 
    \bq\label{ineq: sup I is positive}
        \sup_{\chi \in \mathcal{A}_k} I[\chi] >0.
    \eq
    Also, the constraint $J_k[\chi]=1$ yields the upper bound
    \bq
        \sup_{\chi \in \mathcal{A}_k} I[\chi]  \leq \|\rho_s'\|_{L^\infty(\T)}\| \chi\|_{L^2}^2\le \|\rho_s'\|_{L^\infty(\T)},
    \eq
    so that there exists a maximizing sequence $\chi_j\in \mathcal{A}_k$.   From the definition of $J_k$, the sequence $\{\chi_j\}$ is bounded in $H^1_0((-1, 1))$, and thus  there exists  $\chi^*\in H^1_0((-1, 1))$ such that  $\chi_j \rightharpoonup \chi^*$ in $H^1((-1, 1))$ and $\chi_j \to \chi^*$ in $L^2((-1, 1))$ along a subsequence.  The strong convergence in $L^2$ implies that $I[\chi^*] = \sup_{\chi\in \mathcal{A}_k}I[\chi]$, so it remains to prove that $\chi^*\in \mathcal{A}_k$. Since \eqref{ineq: sup I is positive} holds, $\chi^*$ cannot be zero. Moreover, by the lower semi-continuity of weak convergence, we have
    \bq\label{bound:J}
    0<J_k[\chi^*] \leq\liminf_{j\to \infty} J_k[\chi_j] = 1.
    \eq
    Setting $\Bar{\chi} = (J_k[\chi^*])^{-\mez} \chi^*$, we observe that $\Bar{\chi} \in \mathcal{A}_k$ and \eqref{bound:J} implies
    $$I[\Bar{\chi}] = J_k[\chi^*]^{-1}I[\chi^*] \geq I[\chi^*] = \sup_{\chi\in \mathcal{A}_k}I[\chi],$$
    Since the preceding inequality must be an equality, we deduce that  $J_k[\chi^*] = 1$, i.e, $\chi^* \in \mathcal{A}_k$. \\
     
    Next, setting $\lambda = I[\chi^*]>0$, we claim that $\lambda$ and $\chi^*$ satisfy \eqref{eq: chi}. For any $v\in H^1_0((-1, 1))$, we define the smooth function $j:(-1,1)^2 \to \Rr$ by
    $$j(\tau, \sigma) = J_k[(1+\sigma)\chi^* + \tau v].$$
    We compute
    $$\partial_\sigma j(\tau, \sigma) = 2\int_\T ((1+\sigma)\chi^* + \tau v)\chi^* + k^{-2}((1+\sigma){\chi^*}' + \tau v') {\chi^*}' ,$$
    and so
    $$\partial_\sigma j(0,0) = 2J[\chi^*] =2\neq0.$$
    Using the implicit function theorem and the fact that $j(0,0) = J[\chi^*] =1$, there is a smooth mapping $\phi:(-\tau_0,\tau_0)\to \Rr$ such that $\phi(0)=0$ and $j(\tau, \phi(\tau)) = 1$. 
    Consider $i:(-\tau_0,\tau_0)\to \Rr$ given by 
    $$i(\tau) = I[(1+\phi(\tau))\chi^* + \tau v].$$
    Then $i$ attains its maximum at $\tau = 0$, so that 
        \bq\label{eq: derivative of i}
 0=   \frac{d }{d\tau}i(0) = 2 \int_\T \rho_s' \chi^*(\phi'(0)\chi^* + v)= 2\phi'(0) I[\chi^*] + 2\int_\T \rho_s' \chi^* v.  \eq
    Differentiating the condition $j(\tau,\phi(\tau)) = 1$ at $\tau = 0$ yields
    \bq\label{eq: derivative of phi}
    \phi'(0) = -\frac{\partial_\tau j (0,0)}{\partial_\sigma j (0,0)} =- \frac{2\int_\T \chi^* v + k^{-2}{\chi^*}' v'}{2}=- \int_\T \chi^* v + k^{-2}{\chi^*}' v'.
    \eq
Then we plug \eqref{eq: derivative of phi} into \eqref{eq: derivative of i} and recall that we have set $\lambda = I[\chi^*]$. We obtain 
    \bq
        \int_\T\rho_s' k^2 \chi^*v = \lambda \int_\T k^2\chi^* v + {\chi^*}' v' \quad \forall v\in H^1_0((-1, 1)),
    \eq
    which is precisely  the weak formulation for \eqref{eq: chi}. Using the equation \eqref{eq: chi} and a simple induction, we deduce that $\chi^*\in H^\infty((-1, 1))$. Consequently,   $w= \Delta u=\Delta(e^{ikx}\chi^*(y))\in H^\infty(\Omega)$. This proves the existence of a positive eigenvalue $\lambda=\ld_k$ and its associated smooth eigenfunction $w=w_k$ for each $k\ne 0$; moreover, since $L$ preserves reality, $\RE w_k=\cos(kx)\tilde{w}_k(y)$ is also an eigenfunction. Equivalently, for each $k\ne 0$, there exists $\chi_k\ne 0$ such that $\ld_k=\max_{\chi \ne 0}Q_k[\chi]=Q_k[\chi_k]$. Let $1\le k_1<k_2$. Since $\| \chi_1'\|_{L^2}>0$ for $\chi_1\in H^1_0((-1, 1))\setminus\{0\}$,  we have $J_{k_1}[\chi_1]>J_{k_2}[\chi_1]$, and hence 
    \[
  Q_{k_2}[\chi_{k_1}]>Q_{k_1}[\chi_{k_1}]=\ld_{k_1}.
   \]
It follows that 
\[
\ld_{k_2}=\max_{\chi \ne 0}Q_{k_2}[\chi]\ge   Q_{k_2}[\chi_{k_1}]>\ld_{k_1}.
\]
Therefore, the sequence $\{\ld_k\}_{k=1}^\infty\subset (0, \sup_{(-1, 1)}\rho_s')$ is strictly increasing. 

    {\bf 2.} Since $\ld_k=\max_{\chi\ne 0}Q_k[\chi]$ is increasing in $k$ and bounded by $\sup_{(-1, 1)}\rho_s'$, to prove that $\ld_k\to \sup_{(-1, 1)}\rho_s'$, it suffices to show the following:
    \bq\label{limldk}
    \forall \delta>0,~\exists \chi_\delta \in C^\infty_c((-1, 1))\setminus\{0\},~\exists k_\delta\ge 1,~\forall k\ge k_\delta,~Q_{k}[\chi_\delta]>\sup_{(-1, 1)}\rho_s'-\delta.
    \eq
    Since $\sup\rho_s'>0$, there exists $y_0\in (-1, 1)$ such that $\rho_s'(y_0)>\sup\rho'_s-\delta$. By the continuity of $\rho_s'$, we have $\rho_s'>\sup\rho'_s-\delta$ on some open interval $U$ about $y_0$. Fixing any cutoff function $\chi_\delta \in C^1_c(U)$, we have 
    \[
    Q_k[\chi_\delta]= \frac{\int_U \rho_s'(y)|\chi_\delta(y)|^2 dy}{\int_U |\chi_\delta(y)|^2 + k^{-2}|\chi_\delta'(y)|^2 dy} \xrightarrow{k\to \infty} \frac{\int_U \rho_s'(y)|\chi_\delta(y)|^2 dy}{\int_U |\chi_\delta(y)|^2}>\sup\rho_s'-\delta. 
    \]
 This implies \eqref{limldk},  thereby concluding  the proof of Theorem \ref{theo: linear instability}.  
  \end{proof}
\begin{theo}\label{theo:linins:2}
Consider  $\Omega=\T\times \Rr$. There exists $k_*\in \Nn$ depending only on $\rho_s'$ such that  $L$ has a strictly increasing sequence of positive eigenvalues $\{ \ld_k\}_{k=k_*}^\infty$ converging to $\sup_\Rr\rho_s'$. Moreover, for each $k\ge k_*$, $\ld_k$ has a real eigenfunction $w_k(x, y)=\cos(kx)\tilde{w}_k(y)\in H^\infty(\Omega)$. 
\end{theo}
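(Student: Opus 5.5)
We follow the scheme of the proof of Theorem \ref{theo: linear instability}, replacing $(-1,1)$ by $\Rr$. By \eqref{commuteL:2}, for $w$ with zero horizontal mean we have $L=\rho_s'\p_1^2\Delta^{-1}$. Seeking $u=\Delta^{-1}w=e^{ikx}\chi(y)$, $k\neq 0$, leads to \eqref{eq: chi} on $\Rr$. Testing against $\chi$ again shows that any eigenvalue is real and equals
\[
Q_k[\chi]=\frac{\int_\Rr \rho_s'|\chi|^2}{\int_\Rr |\chi|^2+k^{-2}|\chi'|^2}, \qquad \chi\in H^1(\Rr)\setminus\{0\}.
\]
We therefore maximize $I[\chi]=\int_\Rr\rho_s'|\chi|^2$ over $\mathcal{A}_k=\{\chi\in H^1(\Rr): J_k[\chi]=1\}$ and set $\ld_k:=\sup_{\mathcal{A}_k}I$. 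The argument in Step 2 of the proof of Theorem \ref{theo: linear instability} applies verbatim, using a bump function localized near a point where $\rho_s'$ is nearly maximal. Together with $\ld_k\le\sup\rho_s'$, it gives that $\ld_k$ is nondecreasing in $k$ and $\ld_k\to\sup_\Rr\rho_s'$.

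The main obstacle is existence of a maximizer: on $\Rr$ the embedding $H^1\subset L^2$ is not compact, and a maximizing sequence can lose mass to infinity. This is where the gap condition enters. Set $\ell:=\max\{0,\limsup_{|y|\to\infty}\rho_s'\}<\sup\rho_s'$. Choose $k_*$ so that $\ld_k>\ell$ for all $k\ge k_*$, which is possible since $\ld_k\to\sup\rho_s'$. Fix $\epsilon>0$ with $\ld_k>\ell+2\epsilon$, and pick $R$ such that $\rho_s'\le \ell+\epsilon$ on $|y|>R$. Let $\chi_j\in\mathcal{A}_k$ be a maximizing sequence. It is bounded in $H^1$, so after extraction $\chi_j\rightharpoonup\chi^*$ in $H^1$ and $\chi_j\to\chi^*$ in $L^2_{loc}$. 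Since $\|\chi_j-\chi^*\|_{L^2}\to 0$ fails in general, we use the Brezis--Lieb splitting $\chi_j=\chi^*+r_j$ with $r_j\rightharpoonup 0$. This gives
\[
J_k[\chi_j]=J_k[\chi^*]+J_k[r_j]+o(1),\qquad I[\chi_j]=I[\chi^*]+I[r_j]+o(1).
\]
The second identity holds because $\rho_s'\in L^\infty$ and the cross term $\int\rho_s'\chi^*r_j$ tends to $0$ by weak convergence in $L^2$. Moreover $r_j\to0$ in $L^2(-R,R)$, so
\[
I[r_j]\le(\ell+\epsilon)\|r_j\|_{L^2}^2+o(1)\le(\ell+\epsilon)J_k[r_j]+o(1).
\]
Hence, with $a=J_k[\chi^*]\in[0,1]$,
\[
\ld_k\le I[\chi^*]+(\ell+\epsilon)(1-a)\le \ld_k a+(\ell+\epsilon)(1-a).
\]
Since $\ell+\epsilon<\ld_k$, this forces $a=1$. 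Then $\chi^*\in\mathcal{A}_k$ and $I[\chi^*]=\ld_k$, so $\chi^*$ is a maximizer.

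With $\chi_k=\chi^*$ in hand, we derive the Euler--Lagrange equation exactly as before, using the implicit function theorem with test functions $v\in H^1(\Rr)$. This gives the weak form of \eqref{eq: chi} on $\Rr$. Bootstrapping via $\lambda\chi''=k^2(\lambda-\rho_s')\chi$ with $\rho_s'\in W^{\infty,\infty}$ yields $\chi_k\in H^\infty(\Rr)$. Then $w_k=\Delta(e^{ikx}\chi_k)\in H^\infty(\Omega)$ has zero horizontal mean, so \eqref{commuteL:2} applies and $Lw_k=\ld_k w_k$. Taking real parts gives the eigenfunction $\cos(kx)\tilde w_k(y)$.

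Strict monotonicity follows as in Theorem \ref{theo: linear instability}: for $k_*\le k_1<k_2$, $\chi_{k_1}'\not\equiv0$ since $\chi_{k_1}\in H^1(\Rr)\setminus\{0\}$, so $Q_{k_2}[\chi_{k_1}]>Q_{k_1}[\chi_{k_1}]$, hence $\ld_{k_2}>\ld_{k_1}$. Positivity $\ld_k>\ell\ge0$ holds by the choice of $k_*$, and $k_*$ depends only on $\rho_s'$.
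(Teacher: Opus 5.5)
Your proposal is correct and follows essentially the same route as the paper: you define $\ld_k$ variationally, use the bump-function argument to get $\ld_k\to\sup\rho_s'$ and choose $k_*$ so that $\ld_k>\max\{0,\limsup_{|y|\to\infty}\rho_s'\}$, and then rule out loss of mass at infinity by splitting $\chi_j=\chi^*+r_j$ and bounding $I[r_j]$ by the far-field level of $\rho_s'$. Your single inequality $\ld_k\le \ld_k a+(\ell+\epsilon)(1-a)$, which uses $I[\chi^*]\le \ld_k J_k[\chi^*]$ and remains valid when $\chi^*=0$, compresses the paper's two steps (first $d<1$, then $d=0$) into one.
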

\begin{proof}  We will adapt the proof of Theorem \ref{theo: linear instability} and only elaborate the differences. We seek a positive eigenvalue $\ld$, with associated eigenfunction $w$.  From \eqref{mean:Lw}, we have  $\int_\T w(x, y)dx=\frac{1}{\ld}\int_\T Lw(x, y)dx=0$. Consequently, \eqref{commuteL:2} gives  $Lw=\rho_s'\p_1^2u$, where $u=\Delta^{-1}w$ is well-defined by \eqref{Fourier:inverseLaplace}.  Consequently, for any $\ld>0$,  the equation $Lw=\ld w$  is equivalent to \eqref{eq: u}. Seeking a solution $u(x, y)=e^{ikx}\chi(y)$ with  $k\ne 0$ and $\chi\in H^2(\Rr)$, we again arrive at the formula \eqref{form:ld:quotient} for $\ld$:
\bq\label{ld:form:2}
\ld=\ld_k= \frac{\int_\Rr \rho_s'(y)|\chi(y)|^2 dy}{\int_\Rr |\chi(y)|^2 + k^{-2}|\chi'(y)|^2 dy}=:Q_k[\chi].
 \eq
 We denote the numerator and denominator in \eqref{ld:form:2} by $I[\chi]$ and $J_k[\chi]$, respectively.  To prove the existence of $\ld=\ld_k$, we again solve the maximization problem $\max_{\chi\in \mathcal{A}_k}I[\chi]$, where $\mathcal{A}_k=\{\chi\in H^1(\Rr): J_k[\chi]=1\}$. Consider a maximizing sequence $\chi_j\in \mathcal{A}_k$ and extract a subsequence (still denoted by $\chi_j$) $\chi_j \rightharpoonup \chi^*$ in $H^1(\Rr)$. We now prove that  $I[\chi_j]\to I[\chi^*]$ as $j\to \infty$. 
 
Since $\sup_\Rr\rho_s'>0$ for $\rho_s\in \mathcal{U}$, the proof of \eqref{limldk} carries over to $\Omega=\T\times \Rr$, yielding 
\bq\label{approx:ldk}
 \forall \delta>0,~\exists \chi_\delta \in C^\infty_c(\Rr)\setminus\{0\},~\exists k_\delta\ge 1,~\forall k\ge k_\delta,~Q_{k}[\chi_\delta]>\sup_\Rr\rho_s'-\delta.
\eq
Setting $\tilde{\chi}_\delta=\frac{\chi_\delta}{\sqrt{J_k[\chi_\delta]}}$, we have $J_k[\tilde{\chi}_\delta]=1$, and hence 
\bq\label{approx:ldk:2}
 \forall \delta>0,~\exists k_\delta\ge 1,~\forall k\ge k_\delta,~\sup_{\mathcal{A}_k}I>\sup_\Rr\rho_s'-\delta.
\eq
Since $\sup\rho_s'> \limsup_{|y|\to \infty}\rho_s'(y)$ for $\rho_s\in \mathcal{U}$, applying \eqref{approx:ldk:2} with $\delta=\sup\rho_s'-  \limsup_{|y|\to \infty}\rho_s'(y)$,  we obtain $k_*=k_\delta\ge 1$ such that $\sup_{\mathcal{A}_k}I>\limsup_{|y|\to \infty}\rho_s'(y)$ for all $k\ge k_*$. In the remainder of this proof, we fix $k\ge k_*$ and a constant $\gamma$ satisfying  
\[
\max\Big\{0, \limsup_{|y|\to \infty}\rho_s'(y)\Big\} <\gamma<  \sup_{\mathcal{A}_k}I.
\]
 There exists $R>0$ such that $\rho_s'(y)<\gamma$ for $|y|\ge R$.  We denote $r_j=\chi_j-\chi^*$ and $d = \limsup_{j} J_k[r_j]\ge 0$. Since $r_j\wc 0$ in $H^1(\Rr)$, we have
\bq\label{bound:defect}
1 = \limsup_j J_k[\chi_j] =J_k[\chi^*]+d + 2\lim\sup_j\int_\Rr \chi^*r_j + k^{-2}(\chi^*)'r_j'  =J_k[\chi^*]+d
\eq
and 
\bq\label{bound:defect:2}
    \sup_{\mathcal{A}_k}I = I[\chi^*]+\limsup_jI[r_j].
\eq
On the other hand,  using the local strong convergence of $\chi_j \to \chi^*$ in $L^2((-R,R))$ and that $\rho_s'<\gamma$ on $\{|y|>R\}$, we deduce
\bq\label{bound:defect:20}
    \limsup_j I[r_j] =  \limsup_j \int_{|y|>R} \rho'_s |r_j|^2 \leq \gamma \limsup_j \int_{|y|>R}  |r_j|^2 \leq \gamma d.
\eq
We claim that $d<1$. Indeed, if $d=1$, then \eqref{bound:defect}  implies  $\chi^*=0$, and hence \eqref{bound:defect:2}-\eqref{bound:defect:20} imply
\[
    \sup_{\mathcal{A}_k}I=\limsup_jI[r_j]\le \gamma,
\]
a contradiction.  Consequently,  $J_k[\chi^*]= 1-d>0$, so that 
\bq\label{bound:defect:3}
\sup_{\mathcal{A}_k} I\ge I\left[\frac{\chi^*}{\sqrt{J_k[\chi^*]}}\right]=\frac{I[\chi^*]}{1-d}.
\eq

Inserting this and \eqref{bound:defect:3} into \eqref{bound:defect:2}, we obtain 
\[
\sup_{\mathcal{A}_k}I \le (1-d)\sup_{\mathcal{A}_k} I+\gamma d,
\]
that is,  $d(\sup_{\mathcal{A}_k} I-\gamma)\le 0$. But $\sup_{\mathcal{A}_k} I>\gamma$, so $d=0$. Consequently,    $\chi_j \to \chi^*$ strongly in $H^1(\Rr)$, which implies  $I[\chi_j] \to I[\chi^*]$. 

Having established this, the normalization argument following \eqref{bound:J} applies verbatim and shows that $J_k[\chi^*]=1$, i.e. $\chi^*\in \mathcal{A}_k$. Thus $\ld_k=\max_{\mathcal{A}_k}I=I[\chi^*]>0$. Arguing as in the proof of Theorem \ref{theo: linear instability}, we obtain that $\chi^*\in H^\infty(\Rr)$, $\ld_k$ is an eigenvalue with associated real eigenfunction $\RE w_k=\RE \Delta(e^{ikx} \chi^*(y))=\cos(kx)\tilde{w}_k(y)\in H^\infty(\Omega)$, and $\{\ld_k\}_{k=k_*}^\infty$ is strictly increasing. Finally, \eqref{approx:ldk:2} implies $\lim_{k\to \infty}\ld_k=\sup\rho_s'$. 
 \end{proof}
 \begin{rema}\label{rema:contspec}
For $\Omega=\T\times \Rr$, the condition $\sup_\Rr\rho_s'>0$ alone is insufficient to guarantee the existence of discrete unstable eigenvalues. The simplest example is $\rho_s(y)=ay$, $a>0$. Then  $\rho_s'\equiv a$, so that  $\limsup_{|y|\to \infty}\rho_s'=\sup_\Rr\rho_s'$. Since 
\[
\widehat{Lw}(k, \xi)=\frac{a k^2}{k^2+\xi^2}\hat{w}(k, \xi),
\]
 $\sigma(L)=[0, a]$, where $(0, a]$ is the continuous spectrum and $\ld=0$ is the only eigenvalue. 
 \end{rema} 
\subsection{Semigroup Estimates}
In this subsection, we will leverage the fact that $L$ is a bounded operator to obtain sharp semigroup estimates for $e^{tL}$. Moreover, we will show that the growth rate  of $e^{tL}$ equals  $\sup \rho_s'$ and can be approximated by eigenvalues of $L$. This will be important in the proof of nonlinear instability in  Section \ref{sec:nonlinstab}.

Let $X$ be a Banach space. We denote by  $\cL(X):=\cL(X, X)$  the set of  bounded linear maps from $X$ to $X$. It is well-known that each $A\in \cL(X)$ generates the  uniformly continuous semigroup $\bigl( e^{tA}\bigr)_{t\geq 0}$. The spectrum of  $A\in \cL(X)$ is denoted  by $\sigma(A)$. 
\begin{defi}
   Let  $A\in \cL(X)$ and let  $\sigma(A)$ denote its spectrum. We define the spectral bound 
    \bq\label{def:sA}
    s(A) = \sup \{\RE(\lambda): \ld \in \sigma(A)\}
    \eq
and the growth bound 
    \bq\label{def:omA}
    \omega_0(A) = \inf\left\{\omega \in \Rr:  \exists C_\omega >0, \|e^{tA}\|_\cL \leq C_\omega e^{t\omega} ~\text{for~all~} t\ge 0\right\}.
    \eq
\end{defi}
The following result is a consequence of the spectral mapping theorem.
\begin{prop}[Chapter IV, Corollary 3.12, \cite{EN}]\label{prop: growth bound equals spectral bound}
    For any $A\in \cL(X)$, we have $s(A) = \omega_0(A)$.
\end{prop}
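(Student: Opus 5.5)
We prove the two inequalities $\omega_0(A)\ge s(A)$ and $\omega_0(A)\le s(A)$ separately. The main tools are the holomorphic (Dunford) functional calculus, which is available because $A$ is bounded, and Gelfand's spectral radius formula. Recall that $\sigma(A)$ is a nonempty compact subset of $\Cc$, so $s(A)$ is finite and attained at some $\ld_0\in\sigma(A)$.

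\textbf{Step 1: the spectral mapping identity.} We first show that $\sigma(e^{tA})=e^{t\sigma(A)}$ for each $t\ge 0$. This follows from the spectral mapping theorem for the holomorphic calculus: $e^{tA}=\frac{1}{2\pi i}\oint_\Gamma e^{tz}(z-A)^{-1}\,dz$, where $\Gamma$ is a contour enclosing $\sigma(A)$. If a direct argument is preferred for the inclusion we need, take $\mu\in\sigma(A)$ and use the factorization
\[
e^{t\mu}-e^{tA}=(\mu-A)\int_0^t e^{(t-s)\mu}e^{sA}\,ds,
\]
where the two factors commute. If $e^{t\mu}-e^{tA}$ were invertible, then $\mu-A$ would have both a left and a right inverse, contradicting $\mu\in\sigma(A)$. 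Hence $e^{t\sigma(A)}\subset\sigma(e^{tA})$.

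\textbf{Step 2: the lower bound $\omega_0\ge s$.} Fix $t>0$. The spectral radius satisfies $r(e^{tA})\le \|e^{tA}\|$, and by Step 1 $r(e^{tA})\ge |e^{t\ld_0}|=e^{ts(A)}$. Now suppose $\|e^{tA}\|\le C_\omega e^{t\omega}$ for all $t\ge0$. Then $e^{ts(A)}\le C_\omega e^{t\omega}$ for all $t$, and letting $t\to\infty$ forces $\omega\ge s(A)$. Taking the infimum over admissible $\omega$ gives $\omega_0(A)\ge s(A)$.

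\textbf{Step 3: the upper bound $\omega_0\le s$.} This is the step that needs care. Gelfand's formula gives
\[
r(e^{A})=\lim_{n\to\infty}\|e^{nA}\|^{1/n},
\]
and by Step 1, $r(e^{A})=\max_{\mu\in\sigma(A)}|e^{\mu}|=e^{s(A)}$. Fix $\omega>s(A)$. Then $\|e^{nA}\|\le C e^{n\omega}$ for all integers $n\ge0$. For general $t=n+\tau$ with $\tau\in[0,1)$, we write $e^{tA}=e^{nA}e^{\tau A}$ and use $\|e^{\tau A}\|\le e^{\|A\|}$. This yields $\|e^{tA}\|\le C e^{\|A\|}e^{|\omega|}e^{t\omega}$ for all $t\ge 0$. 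Hence $\omega_0(A)\le\omega$ for every $\omega>s(A)$, so $\omega_0(A)\le s(A)$.

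Combining Steps 2 and 3 gives $s(A)=\omega_0(A)$. The only delicate ingredient is the equality $r(e^{A})=e^{s(A)}$, that is, the full spectral mapping identity from Step 1 rather than just the inclusion. The inclusion alone suffices for the lower bound, but the upper bound requires the reverse inclusion $\sigma(e^{A})\subset e^{\sigma(A)}$. For this I would invoke the holomorphic functional calculus: if $\nu\notin e^{\sigma(A)}$, the function $(\nu-e^{z})^{-1}$ is holomorphic on a neighborhood of $\sigma(A)$, and the calculus turns it into an inverse of $\nu-e^{A}$. This is exactly the content of the cited corollary in \cite{EN}.
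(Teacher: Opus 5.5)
Your proof is correct and takes essentially the route behind the cited result. The paper gives no argument of its own: it quotes Engel--Nagel as a consequence of the spectral mapping theorem. Your argument, combining $\sigma(e^{tA})=e^{t\sigma(A)}$ (via the holomorphic calculus) with Gelfand's formula and the semigroup property to get $\omega_0(A)=\log r(e^{A})=s(A)$, is exactly that standard argument specialized to bounded generators.
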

We recall from \eqref{L:boundedness} that $L\in \cL(H^m(\Omega))$ for all $m\in [0, \infty)$. By virtue of  Theorems \ref{theo: linear instability},  \ref{theo:linins:2}, and  Proposition \ref{prop: growth bound equals spectral bound}, we obtain 
\bq\label{som0}
\omega_0(L)= s(L)>0,
 \eq 
 where $\omega_0(L)$ in general depends on $m\ge 0$.  
  \begin{lemm}\label{lemm:spec1}  
(i)  If $\ld$ is an eigenvalue of $L\in \cL(H^m(\Omega))$, then $\ld\in \Rr$ and $\ld<\sup\rho_s'$.

(ii) If $\RE(\ld)>\sup \rho_s'$, then $\ld I-L: H^m(\Omega)\to H^m(\Omega)$ is surjective.

 \end{lemm}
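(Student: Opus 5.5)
For the eigenvalue statement I would follow the computation in the proof of Theorem \ref{theo: linear instability}, but for a general eigenfunction instead of one separated mode. For the surjectivity statement I would solve the resolvent equation mode by mode in $x$, using Lax--Milgram with a coercivity constant uniform in the mode number $k$. I would then upgrade $L^2$ to $H^m$ by first gaining $x$-derivatives and then trading them for $y$-derivatives. Throughout, $\rho_s\in\mathcal U$, so $\sup\rho_s'>0$.

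\emph{(i).} Let $Lw=\lambda w$ with $0\ne w\in H^m(\Omega)$. If $\lambda=0$ there is nothing to prove. If $\lambda\neq 0$, then \eqref{mean:Lw} gives $\int_\T w\,dx=\lambda^{-1}\int_\T Lw\,dx=0$, so \eqref{commuteL:1}--\eqref{commuteL:2} apply. Thus $u=\Delta_D^{-1}w\in H^2(\Omega)$ is well defined, vanishes on $\p\Omega$ in the channel case, and satisfies $\rho_s'\p_1^2u=\lambda\Delta u$.

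I would multiply by $\bar u$ and integrate by parts. In $x$ this is justified by periodicity; in $y$ by the Dirichlet condition, or by $u\in H^2(\Rr)$ on the cylinder. This gives
\[
\int_\Omega\rho_s'|\p_1u|^2=\lambda\int_\Omega|\na u|^2 .
\]
The left side is real and $\int|\na u|^2>0$ since $w\ne0$, so $\lambda\in\Rr$. If $\lambda\le 0$, then $\lambda<\sup\rho_s'$ trivially. If $\lambda>0$, then
\[
\lambda\le\sup\rho_s'\,\frac{\|\p_1u\|_{L^2}^2}{\|\na u\|_{L^2}^2}\le\sup\rho_s' .
\]
Equality would force $\p_2u\equiv0$. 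Then $u$ is independent of $y$, and hence $u=0$: by the boundary condition in the channel, or by square integrability on $\T\times\Rr$. This contradicts $w\neq 0$.

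\emph{(ii), $L^2$ solvability.} Fix $f\in H^m(\Omega)$ and $\RE\lambda>\sup\rho_s'>0$. Expand $f=\sum_k f_k(y)e^{ikx}$.
\begin{itemize}
\item For $k=0$, the zero mode of $Lw$ vanishes, so I set $w_0=f_0/\lambda$.
\item For $k\ne0$, I seek $u_k\in H^1_0$ (or $H^1(\Rr)$) solving
\[
\lambda(-u_k''+k^2u_k)-k^2\rho_s'u_k=-f_k .
\]
\end{itemize}
The associated sesquilinear form has real part at least
\[
\RE\lambda\,\|u'\|^2+(\RE\lambda-\sup\rho_s')\,k^2\|u\|^2\ge c\bigl(\|u'\|^2+k^2\|u\|^2\bigr),
\]
with $c>0$ independent of $k$. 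Lax--Milgram then gives $u_k$ with $\|u_k'\|^2+k^2\|u_k\|^2\le C\|f_k\|\|u_k\|$. In particular $k^2\|u_k\|_{L^2}\le C\|f_k\|_{L^2}$.

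I then define $w_k:=u_k''-k^2u_k=\lambda^{-1}(f_k-k^2\rho_s'u_k)$, so that $\|w_k\|\le C\|f_k\|$ uniformly in $k$. Summing over $k$ yields $w\in L^2$ with $u=\Delta_D^{-1}w$ and $(\lambda-L)w=f$.

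\emph{(ii), regularity.} Since $\p_1$ commutes with $L$, applying the same uniform bounds to $\p_1^jf$ gives $\p_1^jw\in L^2$ for all $j\le m$. For $y$-derivatives I would use the identity
\[
\lambda w=f+\rho_s'\p_1^2u,
\qquad
\p_2^2\p_1^2u=\p_1^2w-\p_1^4u .
\]
By induction on the number of $y$-derivatives, each pair of $\p_2$'s on $w$ is exchanged for two $\p_1$'s on $w$ and for terms involving $u$, which are two derivatives smoother. Derivatives falling on $\rho_s'$ are harmless since $\rho_s'\in W^{\infty,\infty}$. This should give $\|w\|_{H^m}\le C\|f\|_{H^m}$.

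I expect the main obstacle to be making this derivative bookkeeping rigorous and uniform in $k$, especially for odd numbers of $y$-derivatives. On the cylinder, Fourier in $y$ is also available, which makes it easier. In the channel, the Dirichlet problem for $u_k$ needs $k$-weighted elliptic estimates for $-\p_y^2+k^2$ on $(-1,1)$; I would establish these first, in the form
\[
\sum_{a+b\le s+2}k^{a}\|\p_y^bu_k\|\lesssim\sum_{a+b\le s}k^a\|\p_y^bw_k\|,
\]
and then run the induction.
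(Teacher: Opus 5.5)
Your proposal is correct and follows essentially the paper's route. For (i) you use the identity $\int_\Omega\rho_s'|\p_1u|^2=\lambda\int_\Omega|\nabla u|^2$, which the paper derives mode by mode as a Rayleigh quotient. For (ii) you use coercivity from $\RE\lambda>\sup\rho_s'$, Lax--Milgram and elliptic regularity. The paper applies Lax--Milgram directly in two dimensions and cites standard elliptic regularity, where you do the bookkeeping by hand. The step you flag as the main obstacle closes directly from your bounds $k^2\|u_k\|_{L^2}+k\|u_k'\|_{L^2}\lesssim\|f_k\|_{L^2}$ together with $u_k''=k^2u_k+w_k$, and it needs no boundary compatibility.
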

 \begin{proof}
(i) Clearly $\ld=0<\sup \rho_s'$ is an eigenvalue of $L$ since $L$ annihilates all functions of $y$ only.  Suppose that $\ld \ne 0$ is an eigenvalue of $L$, and $w$ is an associated eigenfunction. We have $u:=\Delta^{-1}_Dw$ satisfies \eqref{eq: u}. For $\Omega=\T\times \Rr$, $u$ is well-defined as argued in the proof of Theorem \ref{theo:linins:2}. Expanding
 \[
 u(x, y)=\sum_{k\in \Zz} e^{ikx}\chi_k(y),
 \]
 we find that $\chi_k$ satisfies \eqref{eq: chi} in place of $\chi$. In particular, for $k=0$ we have $\p_y^2\chi_0=0$, and hence $\chi_0\equiv 0$ since either $\chi_0\in H^1_0((-1, 1))$  or $\chi_0\in H^1(\Rr)$. Since $u\not\equiv 0$, there exists $k_0\ne 0$ such that $\chi_{k_0} \not\equiv 0$.  For any such $k_0$,  after multiplying \eqref{eq: chi} by $\ol{\chi}_k$, we obtain \eqref{form:ld:quotient}: 
 \bq\label{form:ld:quotient:k}
        \lambda = \frac{\int_U \rho_s'(y)|\chi_{k_0}(y)|^2 dy}{\int_U |\chi_{k_0}(y)|^2 + k_0^{-2}|\chi'_{k_0}(y)|^2 dy},\quad U=(-1, 1)~\text{or}~U=\Rr.
    \eq
It follows that $\ld \in \Rr$ and $\ld<\sup \rho_s'$.

(ii)  Let $\RE(\ld)> \sup \rho_s'>0$ and $f\in H^m(\Omega)$. We need to find $w\in H^m(\Omega)$ such that 
\bq\label{resol:eq}
\lambda w-Lw=f.
\eq
{\it Case 1:}  $\Omega=\T\times (-1, 1)$.  By setting $u=\Delta_D^{-1}w$, we obtain the equivalent equation $\ld \Delta u-\rho_s'\p_1^2u=f$, subject to the Dirichlet boundary condition. Since $\RE(\ld)>0$, this is again equivalent to
 \bq\label{eq:uf:sur}
\cL u:= -(1-\frac{\rho_s'}{\ld})\p_1^2u-\p_2^2u=-\frac{1}{\ld}f. 
 \eq
 Since $\rho_s=\rho_s(y)$, we have 
 \bq\label{coercive:cL}
\RE (\cL u, u)_{L^2}=\RE \int_{\Omega}(1-\frac{\rho_s'}{\ld})|\p_1u|^2+|\p_2u|^2=\int_{\Omega}\big(1-\frac{\rho_s' \RE(\ld)}{|\ld|^2}\big)|\p_1u|^2+|\p_2u|^2.
 \eq
 Since $\RE(\ld)> \sup \rho_s'>0$,  we have $\rho_s' \RE(\ld)<(\RE(\ld))^2\le |\ld|^2$, so that $1-\frac{\rho_s' \RE(\ld) }{|\ld|^2}>0$. Thus,  $\cL$ is coercive, and hence \eqref{eq:uf:sur} has a unique weak solution $u\in H^1_0(\Omega)$. Finally, $u\in H^{m+2}(\Omega)$ by the standard elliptic regularity. 

{\it Case 2:}  $\Omega=\T\times \Rr$. For $g: \Omega\to \Cc$, we decompose $g=\tilde{g}+g_0$, where $g_0(y)=\int_\T g(x, y)dx$ and $\int_\T \tilde{g}(x, y)dx=0$. Since $w_0=w_0(y)$, \eqref{Fourier:L} implies $L w_0=0$. Hence, it suffices to set $w_0=\frac{1}{\ld} f_0$ and seek a solution  $\tilde{w}$ to the problem 
\bq\label{resol:eq:2}
(\ld -L)\tilde{w}=\tilde{f},\quad \tilde{w}\in H^m_*(\Omega):=\left\{g: \in H^m(\Omega): \int_\T g(x, \cdot)dx=0\right\}.
\eq
For $\tilde{w}\in H^m_*(\Omega)$,  we have $u=\Delta^{-1}\tilde{w}\in H^m_*(\Omega)$ is well-defined by \eqref{Fourier:inverseLaplace} and  $L=\rho_s'\p_1^2u$ by \eqref{commuteL:2}. Then, $u$ satisfies \eqref{eq:uf:sur} with $f$ replaced by  $\tilde{f}\in H^m_*(\Omega)$. In view of \eqref{coercive:cL} and the Poincar\'e inequality $\| g\|_{L^2(\Omega)}\le C\| \p_1 g\|_{L^2(\Omega)}$ for $g\in H^1_*(\Omega)$, $\cL$  is coercive on $H^1_*(\Omega)$ and we obtain a unique weak solution $u\in H^1_*(\Omega)$. Elliptic regularity then yields $u\in H^{m+2}_*(\Omega)$, and hence $w\in H^{m}_*(\Omega)$ as desired. 

 \end{proof}
  \begin{prop}\label{prop:spec3}
  We have
 \bq
 \sigma(L)\subset\{\ld \in \Cc: \RE(\ld)\le \sup\rho_s'\}
 \eq
 and $\omega_0(L)=s(L)=\sup\rho_s'$.
 \end{prop}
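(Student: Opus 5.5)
The plan is to show that every $\ld$ with $\RE(\ld)>\sup\rho_s'$ lies in the resolvent set of $L\in\cL(H^m(\Omega))$. The lower bound on $s(L)$ then comes from the eigenvalues already constructed, and the equality of bounds from Proposition \ref{prop: growth bound equals spectral bound}.

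\emph{Step 1: injectivity.} Fix $\ld$ with $\RE(\ld)>\sup\rho_s'$. If $\ld I-L$ had nontrivial kernel, $\ld$ would be an eigenvalue. By Lemma \ref{lemm:spec1}(i), $\ld$ would then be real and strictly smaller than $\sup\rho_s'$, which is a contradiction.

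\emph{Step 2: surjectivity and bounded inverse.} Lemma \ref{lemm:spec1}(ii) gives surjectivity of $\ld I-L$ on $H^m(\Omega)$. Since $L$ is bounded by \eqref{L:boundedness}, $\ld I-L$ is a bounded bijection of the Banach space $H^m(\Omega)$. The open mapping theorem then makes its inverse bounded, so $\ld\in\rho(L)$. This proves $\sigma(L)\subset\{\RE(\ld)\le\sup\rho_s'\}$, and hence $s(L)\le\sup\rho_s'$.

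If one wants an explicit resolvent bound instead of the open mapping theorem, the coercivity \eqref{coercive:cL} gives $u\in H^1_0$ (or $H^1_*$) with norm controlled by $\|f\|_{L^2}$. Elliptic regularity in $H^{m+2}$ with constants depending on $\ld$ and $\|\rho_s'\|_{W^{m,\infty}}$ then bounds $\|w\|_{H^m}\lesssim\|f\|_{H^m}$. I would cite the open mapping theorem to avoid tracking these constants.

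\emph{Step 3: lower bound and conclusion.} By Theorem \ref{theo: linear instability} (channel) or Theorem \ref{theo:linins:2} (cylinder), $L$ has eigenvalues $\ld_k\to\sup\rho_s'$. Their eigenfunctions $w_k\in H^\infty(\Omega)\subset H^m(\Omega)$, so these $\ld_k$ are eigenvalues of $L$ on $H^m(\Omega)$ for every $m$. Hence $s(L)\ge\sup_k\ld_k=\sup\rho_s'$, and so $s(L)=\sup\rho_s'$. Proposition \ref{prop: growth bound equals spectral bound} then gives $\omega_0(L)=s(L)=\sup\rho_s'$, independently of $m$.

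The only delicate point is Step 2: surjectivity alone does not place $\ld$ in the resolvent set, and it must be combined with injectivity from Lemma \ref{lemm:spec1}(i). In the cylinder case one should also check that Lemma \ref{lemm:spec1}(i) covers eigenfunctions whose $x$-mean is nonzero. If $\ld\neq0$, applying \eqref{mean:Lw} to $\ld w=Lw$ gives $\int_\T w\,dx=0$, so that case is handled.
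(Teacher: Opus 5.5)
Your proposal is correct and follows the paper's proof. You get injectivity from Lemma \ref{lemm:spec1}(i) and surjectivity from Lemma \ref{lemm:spec1}(ii), and the bounded inverse theorem then puts every $\ld$ with $\RE(\ld)>\sup\rho_s'$ in the resolvent set. The eigenvalues $\ld_k\to\sup\rho_s'$ give the reverse bound, and Proposition \ref{prop: growth bound equals spectral bound} gives $\omega_0(L)=s(L)$. The only cosmetic difference is that you bound $s(L)\ge\sup_k\ld_k$ directly, whereas the paper observes that $\sup\rho_s'\in\sigma(L)$ because the spectrum is closed.
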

 \begin{proof}
Let $\ld\in \sigma(L)$. Suppose for the sake of contradiction that $\RE(\ld)>\sup\rho_s'$. Then $\ld I-L$ is surjective by  Lemma \ref{lemm:spec1} (ii). On the other hand, Lemma \ref{lemm:spec1} (i) implies that $\ld$ is not an eigenvalue, so that $\ld I-L$ is injective. Thus $\ld I-L$ is bijective and its inverse is bounded by the bounded inverse theorem. This contradicts the assumption that $\ld \in \sigma(L)$.

By virtue of Theorems \ref{theo: linear instability} and \ref{theo:linins:2}, $\sup \rho_s'$ is in the closure of $\sigma(L)$ which is closed (and bounded) since $L$ is bounded. Thus $\sup\rho_s'\in \sigma(L)$ is the spectral bound $s(L)$. 
 \end{proof} 
\begin{prop}\label{prop:semigroup} 
 Let $m\in [0, \infty)$, $T>0$, $\eta_0\in H^m(\Omega)$, and $f\in L^1([0, T], H^m(\Omega))$. Suppose that  there exist $M>0$ and $\omega>\omega_0(L)$ such that 
    \bq\label{bound for f}
        \|f(t)\|_{H^m(\Omega)}\leq Me^{\omega t},\quad t\in [0, T].
    \eq
      Then, there exists   $C>0$ depending only on $(m, \rho_s', \om)$ such that  the inhomogeneous problem 
    \bq\label{pb: inhomogeneous linear evolution for eta}
        \begin{cases}\partial_t \eta - L\eta = f,\\
        \eta\vert_{t=0}=\eta_0
        \end{cases}
    \eq
  has a unique solution $\eta\in C([0, T],  H^m(\Omega))$ satisfying 
    \bq\label{semigroup estimate}
        \|\eta(t)\|_{H^m(\Omega)}\leq C\big(\|\eta(0)\|_{H^m(\Omega)}+M\big) e^{\omega t},\quad t\in [0, T]. 
    \eq
\end{prop}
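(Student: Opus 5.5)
Since $L\in\cL(H^m(\Omega))$ by \eqref{L:boundedness}, it generates the uniformly continuous semigroup $e^{tL}=\sum_{n\ge0}t^nL^n/n!$, which is continuous in operator norm. The natural candidate is the Duhamel formula
\[
\eta(t)=e^{tL}\eta_0+\int_0^t e^{(t-s)L}f(s)\,ds,
\]
and I will first settle existence and uniqueness, then the estimate. The Bochner integral is well defined in $H^m(\Omega)$ because $f\in L^1([0,T],H^m)$ and $s\mapsto e^{(t-s)L}$ is norm-continuous. The right-hand side then lies in $C([0,T],H^m)$, is absolutely continuous, and satisfies $\p_t\eta=L\eta+f$ for a.e.\ $t$; this is the mild (equivalently, integral) formulation, which is the natural notion of solution here since $f$ is only $L^1$ in time. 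For uniqueness, the difference $v$ of two solutions solves $v(t)=\int_0^tLv(s)\,ds$. Hence $\|v(t)\|_{H^m}\le\|L\|\int_0^t\|v\|_{H^m}$, and Gronwall's inequality gives $v\equiv0$.

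For the estimate I fix an intermediate rate. Since $\omega>\omega_0(L)$, I choose $\omega'\in(\omega_0(L),\omega)$, for instance $\omega'=(\omega_0(L)+\omega)/2$. By the definition \eqref{def:omA} of $\omega_0$, there is $C_{\omega'}$ with $\|e^{tL}\|_{\cL(H^m)}\le C_{\omega'}e^{\omega' t}$ for all $t\ge0$. The constant $C_{\omega'}$ depends on $m$, $\rho_s'$ and $\omega$ only, because $L$ and $\omega_0(L)$ are determined by $m$ and $\rho_s'$; by Proposition \ref{prop:spec3}, in fact $\omega_0(L)=\sup\rho_s'$.

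The homogeneous part is then bounded by $C_{\omega'}e^{\omega' t}\|\eta_0\|_{H^m}\le C_{\omega'}e^{\omega t}\|\eta_0\|_{H^m}$. For the forcing term, using \eqref{bound for f},
\[
\Big\|\int_0^te^{(t-s)L}f(s)\,ds\Big\|_{H^m}\le C_{\omega'}M\int_0^te^{\omega'(t-s)}e^{\omega s}\,ds
= C_{\omega'}Me^{\omega' t}\frac{e^{(\omega-\omega')t}-1}{\omega-\omega'}\le \frac{C_{\omega'}M}{\omega-\omega'}e^{\omega t}.
\]
Taking $C=C_{\omega'}\max\{1,(\omega-\omega')^{-1}\}$ yields \eqref{semigroup estimate}, with $C$ independent of $T$.

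The only point needing care is the strict gap $\omega>\omega_0(L)$. At $\omega=\omega_0(L)$ one can only use a bound with rate $\omega_0+\epsilon$, and integrating against the forcing would lose a factor growing in $t$. Choosing $\omega'$ strictly between $\omega_0(L)$ and $\omega$ absorbs this loss into the constant $(\omega-\omega')^{-1}$.
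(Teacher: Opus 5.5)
Your proposal is correct and follows the paper's proof. Both write the solution by the Duhamel formula, pick an intermediate rate $\tilde\omega\in(\omega_0(L),\omega)$ so that the definition of the growth bound gives $\|e^{tL}\|_{\mathcal{L}(H^m)}\le Ce^{\tilde\omega t}$, and integrate the forcing explicitly to get the factor $(\omega-\tilde\omega)^{-1}$; your only addition is the existence and uniqueness argument (via Gronwall), which the paper calls obvious.
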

\begin{proof}
    The existence of $\eta$ in $C([0, T], H^m(\Omega))$ is obvious, and $\eta$ is given by the Duhamel formula 
    \bq
        \eta(t) = e^{tL}\eta(0)+  \int_0^t e^{(t-\tau)L}f(\tau) d\tau. 
    \eq
For any $\tilde{\omega}\in (\omega_0(L), \omega)$, we have $\| e^{tL}\|_{\cL(H^m)}\le Ce^{\tilde\om t}$ by the definition of $\om_0(L)$. Combining this with \eqref{bound for f} yields
        \begin{align*}
        \|\eta(t)\|_{H^m} &\leq  \|e^{tL} \eta(0)\|_{H^m} +\int_0^t \|e^{(t-\tau)L} f(\tau)\|_{H^m} d\tau \\
        &\leq C e^{\tilde\omega t}\|\eta(0)\|_{H^m} +C M \int_0^t e^{\tilde\omega (t-\tau)}e^{\omega \tau} d\tau\\
       &  =C e^{\tilde\omega t}\|\eta(0)\|_{H^m(\Omega)}+C\frac{M}{\om-\tilde\om} (e^{\omega t}-e^{\tilde\om t}),
    \end{align*}
which implies the bound \eqref{semigroup estimate}.
\end{proof}


\subsection{Nonlinear instability}\label{sec:nonlinstab}
For  ease of notation, we define 
\bq\label{def: bilinear Q}
    Q(f,g) = \nabla^\perp \Delta_D^{-1}(\partial_1 f) \cdot \nabla g  = \na \cdot (\nabla^\perp \Delta_D^{-1}(\partial_1 f)g).
\eq
Then the $\eta$-equation \eqref{eq: transport for eta} reads
\bq\label{eq: transport in terms of Q}
    \partial_t \eta- L\eta + Q(\eta,\eta)=0.
\eq
Moreover, since $\rho_s$ is purely a function of $y$, we have $Q(\rho_s, \cdot)\equiv 0$. Let $N\in \Nn$ be an integer that will be fixed later. By virtue of Theorems \ref{theo: linear instability}-\ref{theo:linins:2}  and Proposition \ref{prop:spec3}, we can pick an eigenvalue  $\ld$ of $L$ such that $\ld>\frac{\omega_0(L)}{2}=\frac{\sup\rho_s'}{2}>0$, and there is an associated real eigenfunction $w\in H^\infty(\Omega)$. 
 
{\bf 1.} We  recursively define $\eta_j$, $j=1,...,N$ by $\eta_1 =  e^{\lambda t}w$ and 
\bq \label{eq: etaj}
\begin{cases}
            \partial_t\eta_j - L\eta_j + \sum_{k=1}^{j-1} Q(\eta_k,\eta_{j-k}) = 0,\\
            \eta_j\vert_{t=0} = 0
            \end{cases}
\eq
for  $2\leq j \leq N$. Proposition \ref{prop:semigroup} implies that $\eta_j\in C([0, \infty), H^\infty(\Omega))$ for all $j\ge 1$.  Moreover, since $Q(\eta_1, \eta_1)\sim e^{2\ld t}$ and $2\ld >\om_0(L)$, applying the estimate \eqref{semigroup estimate} inductively and using the fact that $H^{n}(\Omega)$ is an algebra for $n> 1$, we find 
\bq\label{estimate for etaj}
    \|\eta_j\|_{H^n} \leq C(j, n, \| w\|_{H^{n+j-1}})e^{j\ld  t},\quad n>1,\quad 1\leq j\leq N.
\eq
We consider an approximation of the perturbation $\eta$  of the form 
\bq
    \eta^a = \sum_{j=1}^N \delta^j \eta_j,
\eq
where $\delta\in (0, 1)$ is to be determined.  Using \eqref{eq: etaj} and the fact that $\p_t \eta_1=L\eta_1$, we deduce 
\bq\label{eq:etaa}
   \p_t \eta^a-L\eta^a+Q(\eta^a, \eta^a)=\sum_{j=1}^N\sum_{i=N-j+1}^N \delta^{j+i}Q(\eta_j,\eta_i)=: R^a.
\eq
By  \eqref{estimate for etaj}, there exists $C(N)>0$ depending only on $(N, m)$ such that 
\bq\label{est:Ra}
    \|R^a(t)\|_{H^m} \leq   C(N)\Bigl(\delta^{N+1}e^{(N+1)\ld  t} + ... + \delta^{2N}e^{2N\ld  t}\Bigr).
\eq
{\bf 2.} Next, we consider the perturbed equation  \eqref{eq: transport for eta} with the initial data $\eta_0 =\delta \eta_1(0)=\delta w$. We fix any  integer $m>2$, so that $H^m(\Omega)\subset W^{1, \infty}(\Omega)$. Since $u=\nabla^\perp \Delta_D^{-1}\partial_1 \eta$ is divergence-free and has the same Sobolev regularity as  $\eta$ and $\rho_s'\in W^{\infty, \infty}(\Omega)$, it can be proven that \eqref{eq: transport in terms of Q} has a unique local solution in $H^m(\Omega)$.

For some number $\tt\in (\delta, 1)$ to be chosen, we define $T_\delta$ by 
\bq\label{def:Tdelta}
\delta e^{\ld T_\delta }=\tt,\quad i.e.\quad T_\delta =\frac{1}{\ld }\ln \frac{\tt}{\delta}>0. 
\eq
Then,  it follows from  \eqref{estimate for etaj} that 
\bq \label{est:etaa}
\| \eta^a(t)\|_{H^m}\le C(N)\frac{ \delta e^{\ld  t}}{1-\delta e^{\ld t}}\le C(N)\frac{\tt}{1-\tt},\quad t\le T_\delta,
\eq
\bq\label{est:Ra:2}
  \|R^a(t)\|_{H^m} \le C(N)\frac{\delta^{N+1}e^{(N+1)\ld  t}}{1-\tt},\quad t\le T_\delta.
\eq
Set   $v= \eta - \eta^a$, which satisfies $v\vert_{t=0}=0$. Using \eqref{eq:etaa}, we find that $v$ obeys the equation 
\bq \label{eq: evolution for v}
   \begin{aligned} 
 \partial_t v -Lv+  Q(v,\eta^a) + Q(v+\eta^a,v)+R^a =0.
\end{aligned}
\eq
Let $T_*>0$ be the maximal time such that $v$ exists on $[0, T_*)$ and $\| v(t)\|_{H^m}\le \mez$ for all $t< T_*$. We assume for the sake of contradiction that  $T_*\le T_\delta$: 
\bq\label{bootstrap:vapp}
\| v(t)\|_{H^m}\le \mez \quad\forall t<T_*. 
\eq
Next, we perform an energy estimate for $v$. For any $\alpha\in \Nn^2$ with $|\alpha|\le m\in \Nn$, we have 
\[
    \mez \frac{d}{dt} \|\partial^\alpha v\|_{L^2}^2 = \int_{\Omega} (\p^\alpha Lv)\p^\alpha v- \int_{\Omega} \partial^\alpha Q(v +\eta^a,v) \partial^\alpha v- \int_{\Omega} \partial^\alpha  \bigl(R^a + Q(v,\eta^a)\bigr) \partial^\alpha v.
\]
 Using the divergence form of $Q$ in \eqref{def: bilinear Q}, we can integrate by parts to have
\bq
\begin{aligned}
    \int_{\Omega}  \partial^\alpha Q(v+\eta^a,v) \partial^\alpha v &= \int_{\Omega} \bigl([\partial^\alpha , \nabla^\perp \Delta_D^{-1}\partial_1 (v+\eta^a)]\cdot \nabla v \bigr) \partial^\alpha v+\int_{\Omega} \bigl(\nabla^\perp \Delta_D^{-1}\partial_1 (v+\eta^a) \cdot \nabla  \p^\alpha v\bigr) \partial^\alpha v\\
    &= \int_{\Omega} \bigl([\partial^\alpha , \nabla^\perp \Delta_D^{-1}\partial_1 (v+\eta^a)]\cdot \nabla v \bigr) \partial^\alpha v.
\end{aligned}
\eq
 To handle the commutator, we recall the standard estimate (see  \cite[Lemma 3.4]{MB}):
    \bq
        \sum_{0\leq |\alpha| \leq m} \|D^\alpha(fg) - fD^\alpha g\|_{L^2(\Omega)} \leq 
        C_m \Bigl( \| f\|_{W^{1, \infty}(\Omega)} \|g\|_{H^{m-1}(\Omega)} + \| f\|_{H^m(\Omega)} \|g\|_{L^\infty(\Omega)} \Bigr).
    \eq
  Combining this with the embedding $H^{m-1}(\Omega)\subset L^\infty(\Omega)$,  we deduce 
\[
  \begin{aligned}
   \left|\int_{\T^2}  \partial^\alpha Q(v+\eta^a,v) \partial^\alpha v\right| &\leq  \|[\partial^\alpha , \nabla^\perp \Delta^{-1}\partial_1 (v+\eta^a)]\cdot \nabla v \|_{L^2}\|\partial^\alpha v\|_{L^2}\\ 
   & \leq C_m (\| v\|_{H^m}+\| \eta^a\|_{H^m})\|v\|_{H^m}^2. 
\end{aligned}
\]
It follows that 
\[
\begin{aligned}
\mez \frac{d}{dt} \|\partial^\alpha v\|_{L^2}^2& \le C_m\| v\|_{H^m}^2+C_m (\| v\|_{H^m}+\| \eta^a\|_{H^m})\|v\|_{H^m}^2+ C_m\|R^a + Q(v,\eta^a) \|_{H^m}\| v\|_{H^m}\\
&\le C_m\| v\|_{H^m}^2+C_m (\| v\|_{H^m}+\|\eta^a\|_{H^m})\|v\|_{H^m}^2+C_m \big(\|R^a\|_{H^m} +  \|v\|_{H^m}\| \eta^a\|_{H^{m+1}}\big) \| v\|_{H^m}\\
&\le C_m (\| v\|_{H^m}+\| \eta^a\|_{H^{m+1}}+1)\|v\|_{H^m}^2+ C_m\|R^a\|_{H^m} \| v\|_{H^m}.
\end{aligned}
\]
Combining this with a simple $L^2$ estimate, we obtain 
\bq\label{Hmest:verror}
\mez \frac{d}{dt} \| v\|_{H^m}^2\le C_m (\| v\|_{H^m}+\| \eta^a\|_{H^{m+1}}+1)\|v\|_{H^m}^2+C_m \|R^a\|_{H^m} \| v\|_{H^m},\quad t<T_*,
\eq
where $C_m>0$ depends only on $(m, \rho_s')$.  When $\Omega=\T\times \Rr$, the preceding estimate holds for $m\in (2, \infty)$ by commuting the $\eta$-equation with the Fourier multiplier $J^m:=(1-\Delta)^\frac{m}{2}$ by invoking Kato-Ponc\'e's commutator estimate \cite{KatoPonce}. 

Using  the estimates  \eqref{Hmest:verror}, \eqref{est:etaa}, and \eqref{est:Ra:2}, we obtain
\bq\label{mainenergy:v}
\mez \frac{d}{dt} \| v\|_{H^m}^2\le C_m \left(\| v\|_{H^m}+C(N)\frac{\tt}{1-\tt}+1\right)\|v\|_{H^m}^2+ \frac{C(N)}{1-\tt}\delta^{N+1}e^{(N+1)\ld  t}\| v\|_{H^m},\quad t<T_*.
\eq 
At this point, we successively choose $N=N(C_m, \ld)$ and $\tt=\tt(C(N))$ such that 
\[
2C_m<(N+1)\ld,\quad C(N)\frac{\tt}{1-\tt}< \mez. 
 \]
Combining this with  \eqref{bootstrap:vapp} yields
\[
C_m \left(\| v(t)\|_{H^m}+C(N)\frac{\tt}{1-\tt}+1\right)<C_m\left(\mez +\mez +1\right)<(N+1)\ld \quad\forall t<T_*.
\]
Hence, integrating \eqref{mainenergy:v} on $[0, T_*)\subset [0, T_\delta)$ and recalling \eqref{def:Tdelta}, we obtain 
\bq\label{est:v:final}
\begin{aligned}
\forall t<T_*\le T_\delta,\quad \| v(t)\|_{H^m}&\le \int_0^t e^{2C_m(t-s)}\frac{C(N)}{1-\tt} \delta^{N+1}e^{(N+1)\ld  s} ds\\
&< \frac{C(N)}{(1-\tt)[(N+1)\ld -2C_m]} \delta^{N+1}e^{(N+1)\ld  t}\\
 &<\frac{C(N)\tt^{N+1}}{(1-\tt)[(N+1)\ld -2C_m]}.
\end{aligned}
\eq
Consequently, if we further restrict $\tt=\tt(C(N))$ such that 
\[
\frac{C(N)\tt^{N+1}}{(1-\tt)[(N+1)\ld -2C_m]}<\mez,
\]
then $\sup_{t\in [0, T_*)} \| v(t)\|_{H^m}<\mez$, contradicting the maximality of $T_*$. Therefore, given  above choice of $N$ and $\tt=\tt(C(N))$, for any  $\delta \in (0, \tt)$, if $\eta(0)=\delta w$ then 
\bq
\| v(t)\|_{H^m}\le \mez \quad\forall t\le T_\delta =\frac{1}{\ld }\ln \frac{\tt}{\delta}. 
\eq
 {\bf 3.} In view of  \eqref{estimate for etaj} and  \eqref{est:v:final},  we have
\begin{align*}
\| \eta (T_\delta)\|_{L^2}& =\| \eta^a(T_\delta)+v(T_\delta)\|_{L^2}\\
&\ge \| \delta \eta_1(T_\delta)\|_{L^2}-\sum_{j=2}^N\|\delta^j \eta_j(T_\delta)\|_{L^2}-\| v(T_\delta)\|_{L^2}\\
&\ge  \tt\|w\|_{L^2}-\frac{C(N)\tt^2}{1-\tt}-\frac{C(N)\tt^{N+1}}{(1-\tt)[(N+1)\ld -2C_m]}.
\end{align*}
By choosing $\tt$ smaller if necessary, we obtain $\| \eta (T_\delta)\|_{L^2}\ge \mez \tt \|w\|_{L^2}$. This completes the proof of Theorem \ref{thm:instability} upon setting  $\nu=\mez \tt \| w\|_{L^2}$.


\section{Stability in periodic channel} 
We  define the class of steady states
\bq
\mathcal{S}_m=\left\{\rho_s\in   H^{m+2}((-1, 1)): \sup_{(-1, 1)}\rho_s'<0,~\rho_s''\in H^{m}_e\right\}, 
\eq 
where
\begin{align}\label{def:He}
    H^k_e &:= \left\{f\in H^{k}(\Omega): \p_2^{2j} f(.,\pm1) = 0,~  0\le j\le l\right\},
\end{align}
$2l$ being the largest even number that is smaller than \(k\). Below is a simple  example of  polynomials $\rho_s(y)\in \mathcal{S}_3$. 
\begin{exam} For constants $c_-<c_+$,  we consider the  polynomial
\[
\rho_s(y)=c_++\frac{c_--c_+}{2}(y+1)+\delta (y^2-1)^5,\quad\delta\in \Rr.
\]
Clearly $\rho_s(\pm 1)=c_\mp$, $\rho_s''(\pm 1)=\rho_s^{(4)}(\pm 1)=0$,  and 
\[
\rho_s'(y)=\frac{c_--c_+}{2}+10\delta y(y^2-1)^4\le \frac{c_--c_+}{2}+10\delta 
 \]
Thus, $\rho_s\in \mathcal{S}_3$ for $|\delta|<\frac{1}{20}(c_+-c_-)$. 
\end{exam}
We recall that the perturbation $\eta(x, y, t):=\rho(x, y, t)-\rho_s(y)$ satisfies 
\begin{subequations} \label{eq:pipm}
    \begin{align}
      &  \p_t \eta  +u\cdot\nabla{\eta} = -\rho_s'(y)\,u_2\quad \text{ in } \Omega,\label{eq:pmass} \\
      &   u=-\nabla q -(0,\eta),\quad
  \nabla\cdot u=0 \quad \text{ in } \Omega, \label{eq:pdarcy}\\
       & u\cdot n = 0 \quad \text{ on } \p\Omega,\label{eq:puboundary}
    \end{align}
\end{subequations}
where $q: \Omega\to \Rr$ and $u: \Omega\to \Rr^2$. 

The following  main result of this section asserts that all steady states in $\mathcal{S}_m$, $2<m\in \Nn$, are nonlinearly stable. 
\begin{theo}[Nonlinear stability]\label{theo:stability}
        Let $2<m\in \Nn$ and  $\rho_s\in\mathcal{S}_m$, with $\sup\rho_s'<-c_0<0$. There exist positive constants $M$, $M'$,  both depending only on  $(c_0, \|\rho'_s\|_{H^{m+1}}, m)$, and $\mu=\mu(c_0, \|\rho_s''\|_{W^{1, \infty}},  m)$ such that for all $\eps<(\frac{\mu}{M})^\mez$, the following holds.   For any \(\eta_0 \in H^m_e(\Omega)\) with  \(\hnorm{\eta_0}{m} < \eps\), \eqref{eq:pipm} has a unique global  solution \(\eta \in C([0,\infty), H^m_e)\), which satisfies  
        \begin{align}\label{est:mainthm}
            \hnorm{\rho(t) - \rho_s}{m}^2 + \int_0^t \hnorm{u}{m}^2 \le M\eps^2
        \end{align}
     and 
        \begin{align} \label{decay:mainthm}
            \tnorm{\rho(t) - \rho_0^*}^2 \le \frac{M'\eps^2}{t^m}
        \end{align}
      for all $t>0$, where   \(\rho(t) = \rho_s(y) + \eta(t)\) is the total density and \(\rho_0^*\) is the measure-preserving stratification of the initial density (see section \ref{sec:measure}). 
    \end{theo}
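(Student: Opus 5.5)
The plan is to follow the potential-energy strategy of \cite{Park, BJPW}, combined with the boundary-compatibility structure of \cite{CCL}, and to close everything by a bootstrap. The argument has four steps.

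\textbf{Step 1: preservation of $H^m_e$.} We first show that \eqref{eq:pipm} is locally well-posed in $H^m_e$ and that the vanishing of even normal derivatives on $y=\pm1$ is propagated. Since $u_2=0$ on $\p\Omega$ and $\dv u=0$, the stream function $\psi=\Delta_D^{-1}\p_1\eta$ satisfies $\p_2^{2j}\psi=0$ on the boundary whenever $\eta\in H^m_e$. This follows inductively from $\Delta\psi=\p_1\eta$ and $\psi|_{\p\Omega}=0$. Consequently $u_1$ has vanishing odd normal derivatives on $\p\Omega$ and $u_2$ has vanishing even ones.

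Differentiating \eqref{eq:pmass} $2j$ times in $y$ and evaluating on the boundary, every term in the resulting source contains either an even $y$-derivative of $\eta$, an even $y$-derivative of $u_2$, or a factor $\p_2^{2i}(\rho_s'\,u_2)$. For the last type, we use that $\rho_s'$ has vanishing odd derivatives at $\pm1$, which is exactly the assumption $\rho_s''\in H^m_e$. Hence $\p_2^{2j}\eta|_{\p\Omega}$ solves a homogeneous linear transport ODE along the boundary and stays zero.

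\textbf{Step 2: energy identities.} The first is the potential-energy identity. Writing $E(t)=\int_\Omega \rho\, y$, one has $\frac{d}{dt}E=-\|u\|_{L^2}^2$. Following \cite{BJPW}, we instead use the relative energy $\int_\Omega \Phi(y,\eta)$, where $\Phi$ is built from the inverse function of $\rho_s$. Because $\sup\rho_s'<-c_0$, this functional is comparable to $\|\eta\|_{L^2}^2$ with constants depending on $c_0$ and $\|\rho_s'\|_{L^\infty}$, and its time derivative is $-\|u\|_{L^2}^2$.

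Second, for $|\alpha|\le m$ we compute $\frac{d}{dt}\|\p^\alpha\eta\|_{L^2}^2$. The linear term $-\int \p^\alpha(\rho_s' u_2)\p^\alpha\eta$ is rewritten, using Darcy's law $\eta=-u_2-\p_2 q$ and integration by parts, as $\int \rho_s'|\p^\alpha u|^2$ plus lower-order terms. The principal part is therefore $\le -c_0\|\p^\alpha u\|_{L^2}^2$.

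The compatibility conditions are needed precisely here, to kill the boundary terms. For pure $x$-derivatives there are no boundary terms. For $\p^\alpha$ with $\alpha_2>0$, we integrate by parts in $y$, and the boundary traces involve $\p_2^{2j}$ of $\eta$, $q$, $u_2$ or $\rho_s'$-odd derivatives, which vanish by Step 1. The lower-order commutator terms, in which derivatives fall on $\rho_s'$, are bounded by $\|\rho_s'\|_{H^{m+1}}\|u\|_{H^{m-1}}\|\eta\|_{H^m}$. These are absorbed by interpolating between the $L^2$ velocity dissipation and the top-order dissipation, which is where $M$ acquires its dependence on $\|\rho_s'\|_{H^{m+1}}$. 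The nonlinear terms are handled by commutator estimates as in Section~\ref{sec:nonlinstab}, giving a bound $C\|\eta\|_{H^m}\|u\|_{H^m}^2$. For this we need the elliptic estimate $\|\eta\|_{H^m}$-control of $\nabla u$ on the channel, which uses $H^m_e$ for the Dirichlet problem.

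\textbf{Step 3: bootstrap.} Combining the identities of Step 2 gives
\[
\frac{d}{dt}\mathcal{E}_m+\mu\|u\|_{H^m}^2\le C\|\eta\|_{H^m}\|u\|_{H^m}^2,
\]
where $\mathcal{E}_m\simeq\|\eta\|_{H^m}^2$. Under the bootstrap hypothesis $\|\eta\|_{H^m}^2\le M\eps^2$ with $M\eps^2<\mu$, the right-hand side is absorbed, and we obtain \eqref{est:mainthm} together with global existence.

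\textbf{Step 4: decay.} This follows \cite{Park, BJPW}. The quantity $\tnorm{\rho-\rho_0^*}^2$ is controlled by the potential energy gap $E(\rho(t))-E(\rho_0^*)$, which is nonincreasing with derivative $-\|u\|_{L^2}^2$. Interpolating $\|u\|_{L^2}$ between the energy gap and the bounded $H^m$ norm of $\eta$, via the relation $\|\p_1\eta\|_{H^{-1}}\lesssim\|u\|_{L^2}$ and anisotropic interpolation with the $H^m$ bound, yields a differential inequality of the form $\dot G\le -c\,G^{1+1/m}(M\eps^2)^{-1/m}$ for the gap $G$. Integrating it gives $G(t)\lesssim \eps^2 t^{-m}$.

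\textbf{Main obstacle.} I expect the hardest step to be the boundary terms in the top-order estimate for mixed and pure $y$-derivatives. In particular, for odd numbers of $y$-derivatives the traces do not vanish directly. I would first try to handle them by converting $\p_2$ derivatives into $\p_1$ and $\dv$ structure, using $\p_2u_2=-\p_1u_1$ and $\p_1\eta=\Delta\psi$, so that only even normal derivatives ever appear on the boundary.
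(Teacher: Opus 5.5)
\textbf{There is a genuine gap in Steps 2--3.} The nonlinear term does not obey the bound $C\|\eta\|_{H^m}\|u\|_{H^m}^2$. So the inequality $\frac{d}{dt}\mathcal{E}_m+\mu\|u\|_{H^m}^2\le C\|\eta\|_{H^m}\|u\|_{H^m}^2$ is not available, and smallness alone cannot close your bootstrap.

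The reason is that the dissipation only controls horizontal derivatives: $\p_1\eta=\Delta\psi$ gives $\|\p_1\eta\|_{H^{m-1}}\lesssim\|u\|_{H^m}$, but nothing controls $\p_2^m\eta$. For $\p^\alpha=\p_2^m$ the transport term produces
\[
-m\int_\Omega \p_2u_2\,|\p_2^m\eta|^2 ,
\]
which has one factor of $u$ and two factors of the undissipated quantity $\p_2^m\eta$. Integrating by parts in $x$ via $\p_2u_2=-\p_1u_1$ only moves a derivative onto $\p_2^m\eta$, producing $m+1$ derivatives. The natural bound is therefore $\|\na u_2\|_{L^\infty}\|\eta\|_{H^m}^2$, which is exactly the extra term in Proposition \ref{prop:aprioriestimate}. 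The commutator estimate of Section \ref{sec:nonlinstab} is cubic in $\|v\|_{H^m}$; that suffices on the finite interval $[0,T_\delta]$, where growth is allowed, but not globally. Closing the bootstrap then requires $\int_0^\infty\|\na u_2\|_{L^\infty}\,dt\lesssim\sqrt{M}\eps$. The bound $\int_0^\infty\|u\|_{H^m}^2\le M\eps^2$ only places $\|\na u_2\|_{L^\infty}$ in $L^2_t$, not $L^1_t$.

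\textbf{What is missing.} Decay has to be fed into the bootstrap, and even the decay from Step 4 is not enough for all $m$. Interpolating $\frac{2}{t}\int_{t/2}^t\|u\|_{L^2}^2\lesssim M\eps^2t^{-(m+1)}$ with the $H^m$ bound gives
\[
\frac{2}{t}\int_{t/2}^t\|\na u\|_{L^\infty}\lesssim\sqrt{M}\eps\,t^{-(m-1-\alpha)/2}.
\]
This is integrable at infinity only for $m>3$, so the case $m=3$ is not covered.

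The missing idea is an anisotropic decay estimate for the vertical velocity. The paper proves the auxiliary inequalities of Proposition \ref{prop:h2difineq} for $\|\p_1^2\eta\|_{L^2}^2$ and $\|\p_1\p_2\eta\|_{L^2}^2$, with dissipation $\|\p_1\na u_2\|_{L^2}^2$ and $\|u_2\|_{H^2}^2$ respectively. It then runs a nested bootstrap with Lemma \ref{lemm:dineq4} to get $\frac{2}{t}\int_{t/2}^t\|u_2\|_{H^2}^2\lesssim M\eps^2t^{-m}$ (Proposition \ref{prop:h2decay}). From this, Lemma \ref{lemm:nau2integ} gives $\int_0^T\|\na u_2\|_{L^\infty}\le K\sqrt{M}\eps$.

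The boundary terms you flag as the main obstacle are handled by the parity bookkeeping you already describe; the real obstacle is the term above. Your Step 4 is a workable alternative to the paper's use of Lemma \ref{lemm:dineq1}: interpolating the $x$-mean-free part of $\eta$, rather than $\p_1\eta$, does give your exponent $1+\frac1m$. However, it must be run under the bootstrap hypothesis, because its output is an input to closing the bootstrap.
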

    \begin{rema}
    The proof of Theorem \ref{theo:stability} yields the following time-average decay of the velocity: 
    \bq\label{tavedecay:u}
      \frac{2}{t}\int_{t/2}^t \tnorm{u(s)}^2\, ds \le \frac{CM\eps^2}{t^{m+1}},\quad   \frac{2}{t} \int_{t/2}^t \hnorm{u_2}{2}^2 \le \frac{\tilde{C}M\eps^2}{t^m},
      \eq
      where $C=C(\|\rho_s'\|_{L^\infty}, c_0, m)$ and $\tilde{C}=\tilde{C}(\|\rho_s''\|_{W^{1, \infty}}, c_0, m)$. Moreover, 
      $\int_0^\infty \| \na u_2\|_{L^\infty}\le K(\tilde{C}, m)\sqrt{M}\eps$. See Proposition \ref{prop:potentialalgdecay}, Proposition \ref{prop:h2decay}, and Lemma \ref{lemm:nau2integ}.  
          \end{rema}
          For the linear steady state $\rho_s(y)=-y$, Theorem \ref{theo:stability} was proven in \cite{Park}. In the boundaryless infinite cylinder $\T\times \Rr$, Theorem \ref{theo:stability} was proven in \cite{BJPW} for all real Sobolev exponent $m>2$. Uniformly decreasing steady states on $\T\times \Rr$ are bijections from $\Rr$ onto $\Rr$, so they cannot be nonnegative. On the other hand, for any $\rho_s\in \mathcal{S}_m$,  $\rho_s-\inf \rho_s+1\in \mathcal{S}_m$ is positive. To obtain Theorem \ref{theo:stability}, we adapt  the potential-energy approach developed in \cite{Park, BJPW} to the domain with boundary $\Omega=\T\times (-1, 1)$.  Control of the boundary terms arising in the top-order Sobolev estimates will be achieved  by exploiting a compatible hierarchy of vanishing even-order normal derivatives, which is preserved by the IPM evolution. This boundary-compatibility mechanism was observed and exploited  in \cite{CCL} to prove stability of the linear steady state $\rho_s=-y$ in $\T\times (-1, 1)$. We will show that for general uniformly decreasing steady states, the condition $\rho_s''\in H^m_e$ allows one to integrate the boundary-compatibility mechanism with the potential-energy approach, yielding stability in $H^m$. For a self-contained proof of Theorem \ref{theo:stability}, we proceed to recall and clarify some facts about measure-preserving stratifications of functions on $\T\times (-1, 1)$ that are uniformly decreasing in the vertical variable. 
          \subsection{Measure-preserving stratification and the potential energy }\label{sec:measure}
Let \(f\in C^1(\Omega,\Rr)\cap C(\ol{\Omega}, \Rr)\) be a uniformly decreasing function, i.e., \(\p_2 f< -c_0<0\) in \(\Omega \), and assume \(f(.,\pm1) = c_\mp\) for constants \(c_0,\, c_\pm\). Then, for each \(x \in \T\), \(f(x,.):[-1,1]\to [c_-,c_+]\) is a bijection. Thus, there exists \\ \(\phi:\T\times [c_-,c_+]\to[-1,1]\) such that
\begin{equation}\label{defi:phi}
f\bigl(x, \phi(x,s)\bigr) = s\quad\forall s\in [c_-,c_+].
\end{equation}
For each $x\in \T$, $\phi(x, \cdot)$ is  uniformly decreasing and
\begin{equation}
    \frac{1}{\pnorm{\p_2 f}{\infty}}\le -\p_2 \phi(x,s) = \frac{1}{-\p_2 f\bigl(x,\phi(x,s)\bigr)} < \frac{1}{c_0}. \label{bound:pphi}
    \end{equation}
     Let \(\bar{\phi}:[c_-,c_+]\to [-1,1]\) be the $x$-average of $\phi(x, s)$,
\begin{equation}\label{def:phibar}
\bar{\phi}(s):= \frac{1}{2\pi}\int_{\T} \phi(x,s)\,dx. 
\end{equation}
It follows from  \eqref{bound:pphi} that
\begin{equation}
    \frac{1}{\pnorm{\p_2 f}{\infty}}\le -\p_2 \bar{\phi}(s) < \frac{1}{c_0}. \label{bound:pbphi}
    \end{equation}
     We define the measure-preserving stratification of \(f\), denoted by \(f^*:[-1,1]\to[c_-,c_+]\), as 
\bq
f^*(y):= \bar{\phi}^{-1}(y).
\eq
Following \cite{BJPW}, we define the potential energy
\bq\label{def:pe}
\mathcal{E}(f) = \int_\Omega f(x,y)\,y\,dxdy - \int_\Omega f^*(y)\,y\,dxdy.
\eq
The proof of Theorem \ref{theo:stability} relies on the decay of \(\mathcal{E}(\rho)\).  To this end, we use the following proposition to derive a differential inequality for \(\mathcal{E}(f)\):
\begin{prop}\label{prop:potentialequiv}
    Let \(f,f^*,\) and \(\mathcal{E}(f)\) be as above. Then we have
    \begin{align}
 \frac{c_0}{2\| \p_2f\|_{L^\infty}^2}  \tnorm{f-f^*}^2 \le \mathcal{E}(f) &\le \frac{\| \p_2f\|_{L^\infty}}{2c^2_0}\tnorm{f-f^*}^2, \label{potentialequiv} \\
    \mathcal{E}{(f)} &\le \frac{C\pnorm{\p_2 f}{\infty}}{c_0^2}\| \p_1f\|^2_{L^2} \le \frac{C\pnorm{\p_2 f}{\infty}}{c_0^2}\hnorm{\na^{\perp}(-\Delta)^{-1} \p_1 f}{1}^2 \label{potentialup},
    \end{align}
    where $C>0$ is an absolute constant. 
\end{prop}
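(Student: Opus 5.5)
The plan is to work fiberwise in $x$ and use the rearrangement structure in the vertical variable. For each fixed $x$, the map $f(x,\cdot)$ is a decreasing bijection $[-1,1]\to[c_-,c_+]$ with inverse $\phi(x,\cdot)$. By \eqref{defi:phi} and \eqref{def:phibar}, $f^*$ is the decreasing function whose inverse is the $x$-average $\bar\phi$.

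\textbf{Layer-cake formula for $\mathcal{E}$.} The first step is to rewrite $\mathcal{E}(f)$ in terms of inverse functions. Write $f(x,y)=c_-+\int_{c_-}^{c_+}\mathbf 1_{\{f(x,y)>s\}}\,ds$ and note that $\{y: f(x,y)>s\}=[-1,\phi(x,s))$. Then
\[
\int_{-1}^1 f(x,y)\,y\,dy=\int_{c_-}^{c_+}\frac{\phi(x,s)^2-1}{2}\,ds,
\]
and the same identity holds for $f^*$ with $\phi$ replaced by $\bar\phi$. Integrating in $x$ and using $\int_\T(\phi-\bar\phi)\,dx=0$, I expect
\[
\mathcal{E}(f)=\frac12\int_\T\int_{c_-}^{c_+}\big(\phi(x,s)-\bar\phi(s)\big)^2\,ds\,dx.
\]
This shows that $\mathcal{E}(f)$ is exactly half the $L^2$ distance, in the inverse variable $s$, between $\phi$ and its $x$-average.

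\textbf{Proof of \eqref{potentialequiv}.} Next I compare $\|\phi-\bar\phi\|_{L^2(ds\,dx)}$ with $\|f-f^*\|_{L^2}$. Fix $x$ and compare the two decreasing graphs $y\mapsto f(x,y)$ and $y\mapsto f^*(y)$. The region enclosed between them can be measured horizontally in $y$ or vertically in $s$. For a point $(y,f(x,y))$ on the first graph, the horizontal gap to the second graph is $|y-\bar\phi(f(x,y))|$, and the vertical gap is $|f(x,y)-f^*(y)|$. Since $f^*$ has slope between $-\|\p_2 f\|_{L^\infty}$ and $-c_0$ by \eqref{bound:pbphi}, these gaps satisfy
\[
|f(x,y)-f^*(y)| = |f^*(\bar\phi(f(x,y)))-f^*(y)| \in \big[c_0 h,\ \|\p_2 f\|_{L^\infty} h\big],\qquad h:=|y-\bar\phi(f(x,y))|.
\]
I then change variables $s=f(x,y)$, with $ds=|\p_2 f|\,dy$, in $\int(\phi-\bar\phi)^2\,ds$. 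At the point $s=f(x,y)$ one has $\phi(x,s)=y$, so the integrand equals $h^2$, and the Jacobian is bounded between $c_0$ and $\|\p_2 f\|_{L^\infty}$. Combining the slope bounds with the Jacobian bounds gives the constants $c_0/(2\|\p_2 f\|_{L^\infty}^2)$ and $\|\p_2 f\|_{L^\infty}/(2c_0^2)$, up to tracking which bound goes on which side.

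\textbf{Proof of \eqref{potentialup}.} For the second inequality I apply the one-dimensional Poincar\'e inequality on $\T$ to $\phi(\cdot,s)-\bar\phi(s)$, which has zero $x$-mean:
\[
\int_\T(\phi-\bar\phi)^2\,dx\le C\int_\T|\p_x\phi|^2\,dx,\qquad \p_x\phi=-\frac{\p_1 f}{\p_2 f}\Big|_{y=\phi(x,s)}.
\]
Changing variables back to $y$ gives
\[
\int|\p_x\phi|^2\,ds\,dx=\int\frac{|\p_1 f|^2}{|\p_2 f|}\,dy\,dx\le \frac{1}{c_0}\|\p_1 f\|_{L^2}^2.
\]
This already yields the first bound in \eqref{potentialup}, with a constant at least as good as the one stated. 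For the final inequality, set $\psi=(-\Delta)^{-1}\p_1 f$ with Dirichlet conditions. Then $\p_1 f=-\Delta\psi$, so $\|\p_1 f\|_{L^2}\lesssim\|\psi\|_{H^2}\lesssim\|\na^\perp\psi\|_{H^1}$ by Poincar\'e's inequality for $\psi\in H^1_0$.

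\textbf{Main obstacle.} The delicate step is the two-sided comparison in \eqref{potentialequiv}: keeping the correct powers of $c_0$ and $\|\p_2 f\|_{L^\infty}$ through the change of variables and the slope bounds. I would handle it cleanly by using the pointwise bound $|f(x,y)-f^*(y)|\in[c_0h,\|\p_2 f\|_{L^\infty}h]$ displayed above, together with the Jacobian bounds $c_0\le|\p_2 f|\le\|\p_2 f\|_{L^\infty}$.
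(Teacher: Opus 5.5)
Your proposal is correct and follows essentially the same route as the paper. It uses the same three ingredients. First, the identity $\mathcal{E}(f)=\frac{1}{2}\|\phi-\bar\phi\|_{L^2(ds\,dx)}^2$, which you obtain by a layer-cake/Fubini computation instead of the paper's integration by parts in $s$. Second, the two-sided comparison of $|f-f^*|$ with the gap $|\phi-\bar\phi|$ through the slope bounds on $f^*$ and the Jacobian bounds; you change variables from $s$ to $y$ and the paper goes from $y$ to $s$, but this is the same computation. Third, Poincar\'e in $x$ combined with $\p_1 f=-\p_2 f\,\p_1\phi$. The one minor difference is that you keep the exact cancellation $|\p_2 f|^2\,|\p_2\phi|=|\p_2 f|\ge c_0$, which gives the slightly sharper constant $C/c_0\le C\|\p_2 f\|_{L^\infty}/c_0^2$ in the first inequality of \eqref{potentialup}, whereas the paper bounds the two factors separately.
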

    \begin{proof}
        Define \(h:\T\times [c_-,c_+]\to\Rr\) as 
        \begin{equation}
        h(x,s):= \phi(x,s) - \bar{\phi}(s). \label{def:h}
        \end{equation}
        First, we derive an expression of the potential in terms of \(h\). Using the change of variables \(y = \phi(x,s)\) for a fixed \(x\), we get
        \begin{align*}
            \int_\T \int_{-1}^1 f\,y\,dydx &= -\int_\T \int_{c_-}^{c_+} f\bigl(x,\phi(x,s)\bigr)\, \phi(x,s)\, \p_2\phi(x,s)\,dsdx 
            = -\int_\T \int_{c_-}^{c_+} s \, \mez \p_2\bigl(\phi^2(x,s)\bigr)\, dsdx.          
        \end{align*}
        Integrating by parts, we have 
        \begin{align*}
            \int_\T \int_{-1}^1 f\,y\,dydx &= -\mez \int_\T c_+ \, \phi^2(x,c_+)- c_- \phi^2(x,c_-)\,dx + \mez \int_\T \int_{c_-}^{c_+} \phi^2(x,s) \,dsdx.
        \end{align*}
        Similarly, the change of variables $y=\bar{\phi}(s)$ yields 
        \begin{align*}
            \int_\T \int_{-1}^{1} f^*\,y\,dydx = 
            -\mez \int_\T c_+ \, \bar{\phi}^2(c_+)- c_- \bar{\phi}^2(c_-)\,dx + \mez \int_\T \int_{c_-}^{c_+} \bar{\phi}^2(s) \,dsdx.
        \end{align*}
        Since \(f(.,\pm) = c_\mp\), it holds that \(\phi(x,c_\pm) = \bar{\phi}(c_\pm) = \mp 1\), and hence,
        \begin{align}
            \mathcal{E}(f) = \mez \int_\T \int_{c_-}^{c_+} \phi^2(x,s) - \bar{\phi}^2(s)\,dsdx
            = \mez \int_\T \int_{c_-}^{c_+} h^2(x,s)\,dsdx = \mez \tnorm{h}^2. \label{eq:potentialh}
        \end{align}
The change of variables \(y = \phi(x,s)\) gives
    \begin{align}
\|f-f^*\|_{L^2(\Omega)}^2 = \int_\T \int_{c_-}^{c_+} |f\bigl(x, \phi(x,s)\bigr) - f^*\bigl(\phi(x,s)\bigr)|^2 \, |\p_2 \phi(x,s)| \, dsdx \label{tnormffs}.
    \end{align}
    On the other hand, since \(f\bigl(x,\phi(x,s)\bigr) =s= f^*\bigl(\bar{\phi}(s)\bigr)\), we obtain
    \begin{align}
        f^*\bigl(\phi(x,s)\bigr) - f\bigl(x,\phi(x,s)\bigr)&
        =   f^*\bigl(\phi(x,s)\bigr) -f^*\bigl(\bar{\phi}(s)\bigr) \nonumber \\ 
        &= \int^{\phi(x,s)}_{\bar{\phi}(s)} \p_y f^*(y)\,dy 
        = - \int_0^{h(x,s)} \p_y f^*\bigl(y+\phi(x,s)\bigr)\,dy. \label{eq:fmfs}
    \end{align}
   By the inverse function rule and \eqref{bound:pbphi}, we have   
       \begin{align*}
        c_0 < -\p_yf^*(y) = \frac{-1}{\p_s \bar{\phi}\bigl(\bar{\phi}^{-1}(y)\bigr)} \le \pnorm{\p_2 f}{\infty},
    \end{align*}
which together with \eqref{eq:fmfs} implies 
    \[
    c_0\, |h(x,s)|\le  |f\bigl(x,\phi(x,s)\bigr) - f^*\bigl(\phi(x,s)\bigr)| \le \pnorm{\p_2 f}{\infty} \, |h(x,s)|.
    \]
By inserting this and \eqref{bound:pbphi} into \eqref{tnormffs} and  recalling  \eqref{eq:potentialh}, we obtain \eqref{potentialequiv}.

   To prove  \eqref{potentialup}, we set  $v=(-\Delta)^{-1}\p_1f$, so that  $\| \na v\|_{H^1}\ge \| \Delta v\|_{L^2}=\| \p_1f\|_{L^2}$.  Then, in view of \eqref{eq:potentialh}  and the Poincar\'e inequality $\| h\|_{L^2}\le C\| \p_1h\|_{L^2}$ (since $\int_\T h(x, s)dx=0$), it suffices to prove $ \tnorm{\p_1 f}^2 \ge \frac{c_0^2}{\pnorm{\p_2 f}{\infty}} \tnorm{\p_1 h}^2$. To this end, we differentiate the identity  \(s = f\bigl(x,\phi(x,s)\bigr)\) in $x$ to have  
    \begin{align*}
       \p_1f\bigl(x,\phi(x,s)\bigr)=-\p_2f\bigl(x,\phi(x,s)\bigr) \p_1 \phi(x,s)=-\p_2f\bigl(x,\phi(x,s)\bigr) \p_1h(x,s).
    \end{align*}
    Then, by  the change of variables $y=\phi(x, s)$ and \eqref{bound:pbphi}, we obtain
    \begin{align*}
        \tnorm{\p_1 f}^2 &= \int _\T \int_{c_-}^{c_+} |\p_1f \bigl(x,\phi(x,s)\bigr)|^2 \, |\p_2\phi(x,s)|\, dsdx\\
        &= \int_\T\int_{c_-}^{c_+} |\p_2 f\bigl(x,\phi(x,s)\bigr)|^2\, |\p_1h(x,s)|^2\, |\p_2\phi(x,s)|\,dsdx \\ 
        & \ge \frac{c_0^2}{\pnorm{\p_2 f}{\infty}} \int_\T \int_{c_-}^{c_+} |\p_1h(x,s)|^2\,dsdx
         = \frac{c_0^2}{\pnorm{\p_2 f}{\infty}} \tnorm{\p_1 h}^2.
    \end{align*}
\end{proof}
\begin{defi}[Rearrangement]
Let \(\mathcal{L}\) denote the Lebesgue measure. Two  measurable functions \(f,g:\T\times [-1,1]\to \Rr\) are called rearrangements of each other if 
    \[
    \mathcal{L}(a \le f\le b) = \mathcal{L}{(a \le g\le b)}\quad\forall a<b.
    \]
\end{defi}
\begin{prop}[Invariance of measure-preserving stratification under rearrangement]\label{prop:invstrat}
    Let $f,g\in C^1(\Omega, \Rr)\cap C(\overline{\Omega}, \Rr)$ be uniformly decreasing functions in \(y\) that are rearrangements of each other such that \(f(\pm1) = g(\pm1)= c_{\mp}\). Then, \(f^*= g^*\).
\end{prop}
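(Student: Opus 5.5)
The plan is to show that $\bar\phi_f = \bar\phi_g$ on $[c_-,c_+]$. Since $f^*=\bar\phi_f^{-1}$ and $g^*=\bar\phi_g^{-1}$, this gives $f^*=g^*$ at once. The idea is to express $\bar\phi$ purely through the distribution function of $f$, which is a rearrangement invariant.

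Fix $s\in[c_-,c_+]$. Because $f(x,\cdot)$ is strictly decreasing with $f(x,\pm1)=c_\mp$, for each $x\in\T$ the slice of the superlevel set is
\[
\{y\in(-1,1): f(x,y)> s\}=(-1,\phi_f(x,s)).
\]
Integrating its length over $x$ (Fubini) gives
\[
\mathcal{L}(\{f>s\})=\int_\T\big(\phi_f(x,s)+1\big)dx = 2\pi\big(\bar\phi_f(s)+1\big),
\]
using \eqref{def:phibar}. The same computation for $g$ yields $\mathcal{L}(\{g>s\})=2\pi(\bar\phi_g(s)+1)$.

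It remains to check that $\mathcal{L}(\{f>s\})=\mathcal{L}(\{g>s\})$ for every $s$, which is where the rearrangement hypothesis enters. The hypothesis is phrased with closed intervals $[a,b]$, so some care is needed. First, the level sets $\{f=a\}$ have measure zero: each $x$-slice is a single point by strict monotonicity in $y$, so Fubini gives measure zero. Hence $\mathcal{L}(a<f\le b)=\mathcal{L}(a\le f\le b)=\mathcal{L}(a\le g\le b)=\mathcal{L}(a<g\le b)$, and similarly for $g$. Now take $b$ larger than $\max(c_+,\sup g)$; by boundedness on $\overline\Omega$ and the monotonicity with boundary values $c_\mp$, in fact $f,g\le c_+$. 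Then $\{f>s\}=\{s<f\le b\}$ and likewise for $g$, so the two measures coincide. Therefore $\bar\phi_f(s)=\bar\phi_g(s)$ for all $s\in[c_-,c_+]$, and inverting gives $f^*=g^*$.

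The only mildly delicate step is the measure-zero claim for level sets, which ensures that open versus closed inequalities do not matter. This follows directly from the uniform decrease in $y$, so I expect no real obstacle. One should also confirm that $\bar\phi_f$ and $\bar\phi_g$ are both bijections $[c_-,c_+]\to[-1,1]$, which is given by \eqref{bound:pbphi}, so that their inverses are well defined and equal.
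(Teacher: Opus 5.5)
Your proof is correct and follows essentially the same route as the paper. Both arguments compute the measure of a level-type set slice by slice in terms of $\phi_f$ and then conclude $\bar\phi_f=\bar\phi_g$ from rearrangement invariance; the paper uses $\{c_-\le f\le s\}$, whose measure is $\int_\T(1-\phi_f(x,s))\,dx$. The only difference is cosmetic: if you work with the closed set $\{s\le f\le c_+\}$, whose slices also have length $\phi_f(x,s)+1$, the hypothesis applies directly and your null-level-set detour becomes unnecessary.
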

\begin{proof}
We denote the  \(\phi\)  in \eqref{defi:phi} for \(f\) (resp. $g$) by \(\phi_f\) (resp. $\phi_g$).  For any $y\in [c_-, c_+]$, we have 
   \begin{multline*}
    \mathcal{L}(\{c_- \le f \le y\}) =\int_\T\int_{-1}^1 \chi_{\{c_-\le f\le y\}}(x, z)dzdx
    =-\int_\T\int_{c_-}^{c_+} \chi_{\{c_-\le f\le y\}}(x, \phi_f(x, s))\p_2\phi_f(x, s)dsdx\\
    =-\int_\T\int_{c_-}^y\p_2\phi_f(x, s)dzdx 
    = -\int_\T \phi_f(x, y)-\phi_f(x, c_-)dx 
       = -\int_\T \phi_f(x, y)-1dx.
   \end{multline*}
 Since $ \mathcal{L}(\{c_- \le f \le y\}) = \mathcal{L}(\{c_- \le g \le y\})$, we deduce that  $\int_\T \phi_f(x, y)dx=\int_\T \phi_g(x, y)dx$ for all $y\in [c_-, c_+]$. Thus $\bar{\phi}_f = \bar{\phi}_g$, i.e. $f^*=g^*$. 
\end{proof}
\begin{coro}\label{prop:potentialdecay}
    Let \(m > 2\) and suppose that \(\rho\in C([0,T], H^m)\) is a solution of \eqref{sys:IPM} such that \(\rho(\cdot ,\pm 1, t) = c_\mp\) for all $t\in [0, T]$. There exists $\eps_0=\eps_0(c_0, m)>0$ such that if 
   \begin{equation} \label{bound:hkperturb}
   \sup_{0\le t\le T} \hnorm{\rho-\rho_s}{m} \le \eps_0,
   \end{equation}
 then  \(\mathcal{E}(\rho(t))\) is well-defined,  $\rho^*(\cdot, t)=\rho^*(\cdot, 0)$ for all $t\in [0, T]$,  and 
   \begin{equation}
       \label{eq:potentialdecay}
       \frac{d}{dt} \mathcal{E}(\rho) = - \tnorm{u}^2.
   \end{equation}
\end{coro}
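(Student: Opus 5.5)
Three things have to be checked: $\rho(t)$ is uniformly decreasing in $y$ (so $\mathcal{E}$ makes sense), $\rho^*$ is constant in time, and the energy identity \eqref{eq:potentialdecay} holds.

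\textbf{Well-definedness.} Since $m>2$, the embedding $H^m(\Omega)\subset W^{1,\infty}(\Omega)$ holds with constant $C_m$. Choose $\eps_0=c_0/(2C_m)$. Then \eqref{bound:hkperturb} gives
\[
\p_2\rho=\rho_s'+\p_2\eta\le -c_0+C_m\eps_0=-\tfrac{c_0}{2}
\]
on $\Omega$ for every $t\in[0,T]$. Moreover $\rho(t)\in C^1(\Omega)\cap C(\overline\Omega)$ and $\rho(\cdot,\pm1,t)=c_\mp$ by hypothesis. So $\rho(t)$ satisfies the assumptions of Section \ref{sec:measure} with $c_0/2$ in place of $c_0$. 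Hence $\phi$, $\bar\phi$, $\rho^*(\cdot,t)$ and $\mathcal{E}(\rho(t))$ are well-defined, and Proposition \ref{prop:potentialequiv} applies.

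\textbf{Invariance of $\rho^*$.} I would use Proposition \ref{prop:invstrat}, so it suffices to show that $\rho(t)$ and $\rho(0)$ are rearrangements of each other. The velocity $u=\na^\perp\Delta_D^{-1}\p_1\rho$ lies in $C([0,T],H^m)\subset C([0,T],W^{1,\infty})$. It is divergence-free and satisfies $u\cdot n=0$ on $\p\Omega$. Its flow map $X(t,\cdot)$ is therefore a well-defined Lipschitz bi-Lipschitz homeomorphism of $\overline\Omega$ that preserves $\Omega$ and Lebesgue measure (Liouville, since $\dv u=0$). Because $\rho$ is a $C^1$ solution of the transport equation, $\rho(t,X(t,z))=\rho_0(z)$. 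Consequently, for all $a<b$,
\[
\mathcal{L}(a\le\rho(t)\le b)=\mathcal{L}\big(X(t,\{a\le\rho_0\le b\})\big)=\mathcal{L}(a\le\rho_0\le b).
\]
Proposition \ref{prop:invstrat} then yields $\rho^*(t)=\rho^*(0)$.

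\textbf{Energy identity.} Since $\rho^*$ does not depend on $t$, the second term in \eqref{def:pe} is constant. Hence
\[
\frac{d}{dt}\mathcal{E}(\rho)=\int_\Omega \p_t\rho\, y=-\int_\Omega (u\cdot\na\rho)\,y .
\]
Differentiation under the integral is justified because $\p_t\rho=-u\cdot\na\rho\in C([0,T],L^2)$. I then integrate by parts using $\dv u=0$ and $u\cdot n=0$ on $\p\Omega$. The boundary terms vanish, and there are no lateral boundary terms because $x$ is periodic. This gives
\[
-\int_\Omega (u\cdot\na\rho)\,y=\int_\Omega \rho\, u\cdot\na y=\int_\Omega \rho\,u_2 .
\]
Finally, testing Darcy's law \eqref{eq:darcy} against $u$ and using $\int_\Omega u\cdot\na p=0$ gives \eqref{Darcy:kinetic}: $\int_\Omega\rho\,u_2=-\|u\|_{L^2}^2$. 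This proves \eqref{eq:potentialdecay}.

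The main point requiring care is the invariance step. It needs measure preservation of the flow, including at the boundary, and this is guaranteed by $u\cdot n=0$ together with the Lipschitz regularity supplied by $m>2$.
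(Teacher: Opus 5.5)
Your proposal is correct and follows the same route as the paper. You use the Sobolev embedding to keep $\rho(t)$ uniformly decreasing, and measure preservation of the divergence-free flow to show that $\rho(t)$ is a rearrangement of $\rho(0)$, so that $\rho^*(t)=\rho^*(0)$ by Proposition \ref{prop:invstrat}. The energy identity then comes from differentiating, integrating by parts, and applying \eqref{Darcy:kinetic}. The only difference is that you check the rearrangement property directly via the flow map and Liouville's theorem, whereas the paper simply cites the transport formula from \cite{MB}.
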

\begin{proof}Since $\rho_s$ is uniformly decreasing, \eqref{bound:hkperturb} and  the Sobolev embedding \(H^m(\Omega)\hookrightarrow C^{1,\a}(\overline{\Omega})\) imply that $\rho(\cdot, t)$ is uniformly decreasing for all $t\in [0, T]$.   Since $\rho$ is transported by a divergence-free vector field, the transport formula (see Proposition 1.3, \cite{MB}) implies that $\rho(\cdot, t)$ is a rearrangement of $\rho(\cdot, 0)$ for all $t\in [0, T]$. Proposition \ref{prop:invstrat} implies that $\rho^*(\cdot, t)=\rho^*(\cdot, 0)$ for all $t\in [0, T]$. Consequently, we have
\begin{align*}
\ddt \mathcal{E}(f) &= \ddt \int_\Omega \rho(x,y)\,y\,dxdy = -\int_\Omega (u\cdot\na)\rho(x,y)\, y \, dxdy= \int_\Omega u_2\rho,
\end{align*}
where the last equality follows from an  integration by parts. Then  \eqref{eq:potentialdecay} follows from this and \eqref{Darcy:kinetic}.
\end{proof}
\subsection{A priori estimates}
First, we establish the local well-posedness of \eqref{eq:pipm} in $H^m_e$ ($2<m\in \Nn$) under suitable boundary-compatibility conditions for $\rho_s$. To this end, we follow \cite{CCL} and use the following $L^2$-orthonormal basis adapted to  $H^n_e$:
\bq
\omega_{p, q}(x, y)=\frac{1}{\sqrt{2\pi}}e^{ip  x}b_q(y),\quad p\in \Zz,~q\in \Nn,
\eq
where 
\bq
b_q(y)=\begin{cases}
\cos(qy\frac{\pi}{2})\quad\text{if~}q\in 2\Nn_0+1,\\
\sin(qy\frac{\pi}{2})\quad\text{if~}q\in 2\Nn_0,\quad \Nn_0:=\Nn\cup\{0\}.
\end{cases}
\eq
Let $\proj_k$ denote the projection from $L^2$ onto $\text{span}\{ \omega_{p, q}: |p|\le k,~0\le q\le k\}$. 

Similarly,  letting $2\l+1$ be  the largest odd integer smaller than $n\ge 2$, we define 
\[
H^n_{odd}= \left\{f\in H^{n}(\Omega): \p_2^{2j+1} f(.,\pm1) = 0,~  0\le j\le l\right\}.
\]
$L^2$ has the following orthonormal basis adapted to  $H^n_{odd}$:
\bq
\tilde \omega_{p, q}(x, y)=\frac{1}{\sqrt{2\pi}}e^{ip  x}c_q(y),\quad p\in \Zz,~q\in \Nn_0,
\eq
where 
\bq
c_q(y)=\begin{cases}
\sin(qy\frac{\pi}{2})\quad\text{if~}q\in 2\Nn_0+1,\\
\cos(qy\frac{\pi}{2})\quad\text{if~}q\in 2\Nn,\\
1/\sqrt{2}\quad\text{if~}q=0.
\end{cases}
\eq
We denote by $\tilde\proj_k$  the projection from $L^2$ onto $\text{span}\{ \tilde\omega_{p, q}: |p|\le k,~0\le q\le k\}$. 
\begin{lemm}\cite[Lemma 2.4]{CCL} For $2\le n\in \Nn$, we have
\bq\label{prop:projk}
\forall f\in H^n_e, \quad \| \proj_k f\|_{H^n}\le \| f\|_{H^n}~\text{and}~\lim_{k\to \infty}\| \proj_k f- f\|_{H^n}=0,
\eq
\bq\label{prop:projk:2}
\forall f\in H^n_{odd}, \quad \| \tilde\proj_k f\|_{H^n}\le \| f\|_{H^n}~\text{and}~\lim_{k\to \infty}\| \tilde\proj_k f- f\|_{H^n}=0,
\eq
\bq\label{comm:proj:0}
\forall f\in H^1(\Omega),\quad [\p_1, \proj_k] f=[\p_1, \tilde\proj_k] f=0,\quad \p_2\tilde\proj_k f=\proj_k\p_2f,
\eq
\bq\label{comm:proj:00}
\forall f\in H^1_e(\Omega),\quad \p_2\proj_k f= \tilde\proj_k\p_2f.
\eq
\end{lemm}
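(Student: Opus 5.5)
The plan is to work entirely in the explicit eigenbases. The functions $b_q$ are the Dirichlet eigenfunctions of $-\p_2^2$ on $(-1,1)$: at $y=\pm1$ we have $\cos(q\pi/2)=0$ for odd $q$ and $\sin(q\pi/2)=0$ for even $q$. The functions $c_q$ are the corresponding Neumann eigenfunctions. Both families have eigenvalue $(q\pi/2)^2$, and $\p_2 b_q=\mp\frac{q\pi}{2}c_q$. Consequently $\{\omega_{p,q}\}$ and $\{\tilde\omega_{p,q}\}$ diagonalize $-\Delta$ with Dirichlet and Neumann conditions respectively, and $\p_2$ maps one basis to the other up to the factor $\mp q\pi/2$.

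\textbf{Commutation identities.} I would prove \eqref{comm:proj:0} and \eqref{comm:proj:00} first. Since $\p_1\omega_{p,q}=ip\,\omega_{p,q}$, integrating by parts in the periodic variable $x$ gives $(\p_1 f,\omega_{p,q})=-ip(f,\omega_{p,q})$. Hence $\p_1$ acts diagonally on coefficients and commutes with both projections; the same holds for $\tilde\omega_{p,q}$.

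For $\p_2\tilde\proj_k f=\proj_k\p_2 f$, I would compute
\[
(\p_2 f,\omega_{p,q})=-(f,\p_2\omega_{p,q})+\text{boundary terms}.
\]
The boundary terms vanish because $b_q(\pm1)=0$, with no condition needed on $f\in H^1$. Since $\p_2 b_q$ is a multiple of $c_q$, this expresses the $b$-coefficients of $\p_2 f$ in terms of the $c$-coefficients of $f$. Differentiating the truncated $c$-expansion termwise and comparing gives the identity. One index detail needs checking: the constant mode $c_0$ is annihilated by $\p_2$, and the truncation range $0\le q\le k$ must match on both sides.

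Symmetrically, for $f\in H^1_e$ the trace $f(\cdot,\pm1)=0$ kills the boundary term in $(\p_2 f,\tilde\omega_{p,q})$, which gives \eqref{comm:proj:00}.

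\textbf{Norm bound and convergence.} Next I would prove \eqref{prop:projk} by induction on derivatives, using the commutation relations. For $f\in H^n_e$, write $\p^\alpha=\p_1^a\p_2^b$.

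\begin{itemize}
\item If $b$ is even, then $\p_2^b f$ satisfies the Dirichlet condition whenever it is needed. Alternating \eqref{comm:proj:00} and \eqref{comm:proj:0} gives $\p^\alpha\proj_k f=\proj_k\p^\alpha f$.
\item If $b$ is odd, the same alternation gives $\p^\alpha\proj_k f=\tilde\proj_k\p^\alpha f$.
\end{itemize}

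The hypotheses used at each step are exactly that $\p_2^{2j}f$ vanishes on the boundary for $2j<n$, so that every intermediate function to which \eqref{comm:proj:00} is applied lies in $H^1_e$. Since $\proj_k$ and $\tilde\proj_k$ are orthogonal projections in $L^2$, we get
\[
\|\p^\alpha\proj_k f\|_{L^2}\le\|\p^\alpha f\|_{L^2}.
\]
Summing over $|\alpha|\le n$ yields $\|\proj_k f\|_{H^n}\le\|f\|_{H^n}$. Moreover,
\[
\|\p^\alpha(\proj_k f-f)\|_{L^2}=\|(P-I)\p^\alpha f\|_{L^2}\to0
\]
with $P=\proj_k$ or $\tilde\proj_k$, by completeness of each basis in $L^2$. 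This gives $H^n$ convergence. The statement \eqref{prop:projk:2} follows identically with the roles of the two bases exchanged.

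\textbf{Main obstacle.} The delicate step is the bookkeeping of the boundary conditions at top order. When $b=n$ is odd or even, the last application of a commutation identity must not require a trace condition beyond those built into $H^n_e$. Here $2l$ is the largest even number below $n$, so the conditions stop at $\p_2^{2l}$, and I would need to check that the final differentiation only uses \eqref{comm:proj:0}, which requires no trace condition.

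Completeness of $\{b_q\}$ and $\{c_q\}$ in $L^2(-1,1)$ is standard Sturm–Liouville theory, and I would take it for granted.
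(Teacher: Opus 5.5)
Your argument is correct. The paper gives no proof of this lemma and simply quotes it from \cite{CCL}. Your route matches the mechanism the paper records right after the lemma in \eqref{comm:proj}: coefficient identities from one integration by parts against the explicit bases, iterated to give $\p_1^a\p_2^b\proj_k f=\proj_k\p_1^a\p_2^b f$ or $\tilde\proj_k\p_1^a\p_2^b f$ according to the parity of $b$, then $L^2$-orthogonality and strong convergence of the projections.

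Your ``main obstacle'' remark needs a correction. For odd $n$, the last $y$-derivative at $b=n$ does go through \eqref{comm:proj:00}, applied to $\p_2^{n-1}f\in H^1$. This is legitimate because $n-1=2l$, and $\p_2^{2l}f(\cdot,\pm1)=0$ is part of the definition of $H^n_e$. So the thing you propose to check (that only \eqref{comm:proj:0} is used at the last step) is false for odd $n$, but the conclusion still holds. The right bookkeeping is the one in your bullet list: the $i$-th $y$-derivative uses \eqref{comm:proj:00} exactly when $i$ is odd, and this requires $\p_2^{i-1}f(\cdot,\pm1)=0$ with $i-1\le 2l$.

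A minor convention point: with $(g,h)=\int_\Omega g\bar h$, the $x$-integration by parts gives $(\p_1 f,\omega_{p,q})=ip\,(f,\omega_{p,q})$. Your $-ip$ corresponds to the inner product that is antilinear in the first slot.
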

It follows from \eqref{comm:proj:0} and \eqref{comm:proj:00} that 
\bq\label{comm:proj}
\forall f\in H^n_e,\quad\text{if~} j+\ell \le n~\text{then}~
\p_1^j\p_2^\ell \proj_k f=
\begin{cases}
 \proj_k \p_1^j\p_2^\ell f\quad\text{if~} \ell~\text{is~even},\\
  \tilde\proj_k \p_1^j\p_2^\ell f\quad\text{if~} \ell~\text{is~odd}.
\end{cases}
\eq
The following theorem generalizes Theorem 4.1 in \cite{CCL} from $\rho_s=-y$ to $\rho_s'\in H^m$ satisfying $\rho_s''\in H^{m-2}_e$.
\begin{prop}\label{prop:lwp}
 Let \(2<m\in \Nn\) and assume  $\rho_s'\in H^m$  and $\rho_s''\in H^{m-2}_e$. For any $\eta_0\in H^m_e$, there exist $T>0$ and a unique solution $\eta\in C([0, T], H^m_e)$ to  \eqref{eq:pipm}. Moreover, if the maximal existence time $T^0$ is finite, then 
 \bq\label{blowup}
\int_0^{T_0}  \| \na \eta(s)\|_{L^\infty}+\| \na u(s)\|_{L^\infty} ds=\infty. 
 \eq
\end{prop}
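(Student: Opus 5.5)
The plan is a Galerkin scheme built on the projections $\proj_k$, following \cite[Theorem 4.1]{CCL}. The new ingredient is a parity argument showing that the nonlinear term and the linear forcing $-\rho_s'u_2$ preserve the class $H^n_e$, and it is exactly here that the hypothesis $\rho_s''\in H^{m-2}_e$ enters.

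\textbf{Step 1: parity structure.} Let $\eta\in H^n_e$ and write $u=\na^\perp\psi$ with $\psi=\Delta_D^{-1}\p_1\eta$. Since $\psi=0$ on $\p\Omega$ and $\Delta\psi=\p_1\eta$ has vanishing even normal derivatives there, an induction on $\p_2^{2j}\psi=\p_1\p_2^{2j-2}\eta-\p_1^2\p_2^{2j-2}\psi$ gives $\p_2^{2j}\psi|_{y=\pm1}=0$ for every admissible $j$. Hence:
\begin{itemize}
\item $u_2=-\p_1\psi$ lies in $H^{n+1}_e$;
\item $u_1=\p_2\psi$ has vanishing odd normal derivatives, i.e. it lies in the odd class.
\end{itemize}
Now expand by Leibniz. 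In $\p_2^{2j}(\rho_s'u_2)$, each term $\p_2^a\rho_s'\,\p_2^{2j-a}u_2$ vanishes on the boundary: if $a$ is even, the $u_2$ factor carries an even number of derivatives; if $a$ is odd, $\p_2^a\rho_s'=\p_2^{a-1}\rho_s''$ is an even derivative of $\rho_s''$, which vanishes because $\rho_s''\in H^{m-2}_e$. The same bookkeeping applies to $u\cdot\na\eta=u_1\p_1\eta+u_2\p_2\eta$:
\begin{itemize}
\item in $\p_2^a u_1\,\p_2^b\p_1\eta$ with $a+b$ even, either $a$ is odd (so $u_1$ vanishes) or $b$ is even (so $\eta$ vanishes);
\item the $u_2\p_2\eta$ terms are handled similarly.
\end{itemize}
So the right-hand side maps $H^m_e$ into $H^{m-1}_e$, which is the compatibility needed for the commutation rule \eqref{comm:proj}.

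\textbf{Step 2: approximate problem and uniform bounds.} Solve the ODE
\[
\p_t\eta_k+\proj_k(u_k\cdot\na\eta_k)=-\proj_k(\rho_s'u_{k,2}),\qquad \eta_k(0)=\proj_k\eta_0,
\]
on the finite-dimensional range of $\proj_k$. Elements of this range are trigonometric polynomials in $H^\infty_e$, so Step 1 applies to $\eta_k$. For $|\alpha|\le m$ with $\alpha=(\alpha_1,\ell)$, use \eqref{comm:proj} to move $\p^\alpha$ through $\proj_k$ (it becomes $\proj_k$ or $\tilde\proj_k$ according to the parity of $\ell$). Then $\p^\alpha\eta_k$ lies in the range of the corresponding projection, and self-adjointness removes the projection from the pairing $\int\p^\alpha(\cdot)\,\p^\alpha\eta_k$.

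The top-order transport term $\int u_k\cdot\na\p^\alpha\eta_k\,\p^\alpha\eta_k$ vanishes because $\dv u_k=0$ and $u_k\cdot n=0$. The remaining commutators are handled by the Moser estimate \cite[Lemma 3.4]{MB} together with $\|u_k\|_{H^m}\lesssim\|\eta_k\|_{H^m}$. The linear term is bounded by $\|\rho_s'\|_{H^m}\|\eta_k\|_{H^m}^2$. This yields
\[
\ddt\|\eta_k\|_{H^m}\le C\big(1+\|\na u_k\|_{L^\infty}+\|\na\eta_k\|_{L^\infty}\big)\|\eta_k\|_{H^m}\le C(1+\|\eta_k\|_{H^m})\|\eta_k\|_{H^m},
\]
which gives a time $T$ independent of $k$, thanks to $\|\proj_k\eta_0\|_{H^m}\le\|\eta_0\|_{H^m}$ from \eqref{prop:projk}.

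\textbf{Step 3: limit, uniqueness and continuation.}
\begin{itemize}
\item \emph{Existence.} Aubin--Lions gives a limit $\eta\in L^\infty H^m\cap C H^{m-1}$ solving \eqref{eq:pipm}. The traces $\p_2^{2j}\eta(\cdot,\pm1)=0$ for $2j<m$ pass to the limit by strong $H^{m-1}$ convergence, so $\eta(t)\in H^m_e$.
\item \emph{Continuity in time.} Weak continuity in $H^m$ plus continuity of the norm (from the energy identity, or a Bona--Smith regularization of the data) gives $\eta\in C([0,T],H^m_e)$.
\item \emph{Uniqueness.} An $L^2$ estimate for the difference of two solutions, using $\|\na\eta\|_{L^\infty}$ and the $L^2$-boundedness of the Biot--Savart law.
\item \emph{Blow-up criterion \eqref{blowup}.} Apply the linear-in-$\|\eta\|_{H^m}$ form of the inequality in Step 2 directly to the solution and use Gronwall.
\end{itemize}

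\textbf{Main obstacle.} I expect the delicate point to be Step 2, making the commutation with $\proj_k$ rigorous at every order $|\alpha|\le m$. The rule \eqref{comm:proj} requires its argument to lie in $H^n_e$, so the full parity analysis of Step 1 must be verified for $u_k\cdot\na\eta_k$ and $\rho_s'u_{k,2}$, including the top-order terms where only $m-1$ derivatives of the product are available. This verification is precisely what fails without $\rho_s''\in H^{m-2}_e$.
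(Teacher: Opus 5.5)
Your proposal follows the paper's proof essentially step for step. It uses the same Galerkin scheme with $\proj_k$, the same Leibniz/parity argument showing $u^{[k]}\cdot\na\eta^{[k]}\in H^\infty_e$ and $\rho_s'u^{[k]}_2\in H^m_e$ (with $\rho_s''\in H^{m-2}_e$ entering exactly as you describe), the same removal of $\proj_k$ from the top-order pairing via \eqref{comm:proj} and self-adjointness, the commutator estimate giving a $k$-independent time, and passage to the limit as in \cite{CCL}.

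There are two small imprecisions. First, at order $|\alpha|=m$, and for $\|\proj_k f\|_{H^m}\le\|f\|_{H^m}$, you need the forcing terms in $H^m_e$, not merely $H^{m-1}_e$; the two classes differ for odd $m$. Your parity argument does deliver $H^m_e$ for the smooth Galerkin iterates. Second, $u_2=-\p_1^2\Delta_D^{-1}\eta$ has only the regularity of $\eta$, so your claim $u_2\in H^{n+1}_e$ is valid as a statement about vanishing traces only, not about $H^{n+1}$ regularity.
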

\begin{proof}  
  For $k\in \Nn$, we consider the $k^{th}$ Galerkin approximation $\eta^{[k]}$ which solves the problem 
\bq\label{eq:Galerkin}
\begin{cases}
\p_t\eta^{[k]}+\proj_k(u^{[k]}\cdot \na \eta^{[k]})=-\proj_k(\rho_s' u^{[k]}_2),\quad u^{[k]}=\na^\perp\Delta_D^{-1}\p_1\eta^{[k]},\\
\eta^{[k]}\vert_{t=0}=\proj_k\eta_0.
\end{cases}
\eq
The solution must be of the form 
\[
\eta^{[k]}(x, t)=\sum_{|p|\le k, q\le k} a_{p, q}(t)\omega_{p, q}(x).
\]
Then \eqref{eq:Galerkin} becomes an autonomous ODE system for  $\{a_{p, q}\}_{|p|\le k, q\le k}$. The local existence of $a_{p, q}$ follows from the Cauchy-Lipschitz theorem. Since $\proj_k$ is a projection, we have 
\bq\label{eest:Ga:0}
  \mez \ddt \tnorm{\eta^{[k]}}^2 = - \int_\Omega \rho'_s(y) \,u^{[k]}_2 \, \eta^{[k]} \le \pnorm{\rho'_s}{\infty} \tnorm{u^{[k]}}\tnorm{\eta^{[k]}}\le C\pnorm{\rho'_s}{\infty} \tnorm{\eta^{[k]}}^2,\quad C=C(m).
\eq
It follows that 
\[
\tnorm{\eta^{[k]}(t)}\le \tnorm{\eta_0^{[k]}}\exp(Ct \pnorm{\rho'_s}{\infty}),
\]
and hence $a_{p, q}(t)$ exists for all $t>0$. Clearly $\eta^{[k]}\in C^1([0, \infty); H^\infty_e)$ for all $n\in \Nn$, where $H^\infty_e:=\cap_{n\in \Nn} H^n_e$.  To obtain the  uniform $H^m$ estimate for $\eta^{[k]}$, we claim that 
\bq\label{vanishing:lwp}
u^{[k]}\cdot \na \eta^{[k]},~\rho_s'u_2^{[k]}\in H^{m}_e.
\eq
Since $\eta^{[k]}\in H^\infty_e$, we have  $u^{[k]}_2=-\p_1^2\Delta^{-1}\eta^{[k]}\in H^\infty_e$, $\p_2u^{[k]}_1=\p^2_2\Delta^{-1}\p_1\eta^{[k]}\in H^\infty_e$, and hence
\[
u^{[k]}\cdot \na \eta^{[k]}=u^{[k]}_1\p_1 \eta^{[k]}+u^{[k]}_2 \p_2 \eta^{[k]}\in H^\infty_e
\]
by Leibniz's rule. Likewise, since $u^{[k]}_2\in H^\infty_e$, $\rho_s'\in H^m$, and $\rho_s'' \in H^{m-2}_e$, we have $\rho_s'u_2^{[k]}\in H^{m}_e$. Indeed, for any $j\le l$, $2l$ being the largest  even number smaller than $m$, we have 
\[
\p_2^{2j}(\rho_s'u_2^{[k]})=\sum_{r=0}^{2j}a_r\p_2^{r}\rho_s'\p_2^{2j-r}u_2^{[k]}.
\]
If $r$ is even, then $\p_2^{2j-r}u_2^{[k]}$ vanishes on $\p\Omega$ since $u^{[k]}_2\in H^\infty_e$. If $r$ is odd, then $\p_2^{r}\rho_s'=\p_2^{r-1}\rho_s''$ vanishes on $\p\Omega$ because $r-1$ is even, $r-1\le 2l-2$, and  $\rho_s''\in H^{m-2}_e$.

Since $\rho_s'u_2^{[k]}\in H^{m}_e$, we can apply \eqref{prop:projk} to have 
\bq\label{eest:Ga:1}
\| \proj_k(\rho_s'u_2^{[k]})\|_{H^m}\le \| \rho_s'u_2^{[k]}\|_{H^m}\le C_m\| \rho_s'\|_{H^m}\| u_2^{[k]}\|_{H^m}\le C_m\| \rho_s'\|_{H^m}\| \eta^{[k]}\|_{H^m}, 
\eq
where $C_m$ is independent of  $k$. For any multi-index $|\alpha|=m$, since $u^{[k]}\cdot \na \eta^{[k]}$, $\eta^{[k]}\in H^\infty_e$, the  commutation  identity  \eqref{comm:proj} implies
\[
\Big(\p^\alpha   \proj_k(u^{[k]}\cdot \na \eta^{[k]}), \p^\alpha \eta^{[k]}\Big)_{L^2}=\Big(\p^\alpha   (u^{[k]}\cdot \na \eta^{[k]}), \p^\alpha \proj_k\eta^{[k]}\Big)_{L^2}=\Big(\p^\alpha  (u^{[k]}\cdot \na \eta^{[k]}), \p^\alpha\eta^{[k]}\Big)_{L^2}.
\]
 Consequently, for $m>2$, the standard commutator estimate for $[\p^\alpha, u^{[k]}]\cdot \na \eta^{[k]}$ gives
 \begin{align}\label{eest:Ga:20}
\Big| \Big(\p^\alpha   \proj_k(u^{[k]}\cdot \na \eta^{[k]}), \p^\alpha \eta^{[k]}\Big)_{L^2}\Big| & \le C_m\big(\|\na u^{[k]}\|_{L^\infty}\|  \eta^{[k]}\|_{H^m}+\|\na \eta^{[k]}\|_{L^\infty}\|  u^{[k]}\|_{H^m}\big)\|  \eta^{[k]}\|_{H^m}\\
&\le C_m\|u^{[k]}\|_{H^m}\|  \eta^{[k]}\|_{H^m}^2\le C_m\|  \eta^{[k]}\|_{H^m}^3. \label{eest:Ga:2}
 \end{align}
Combining  \eqref{eest:Ga:1} and \eqref{eest:Ga:2} with \eqref{eest:Ga:0}, we obtain
 \bq\label{apriori:Galarkin}
\mez \frac{d}{dt}\| \eta^{[k]}\|_{H^m}^2\le C_m \|\eta^{[k]}\|_{H^m}^3+C_m\| \rho_s'\|_{H^m} \|\eta^{[k]}\|_{H^m}^2, 
\eq
where $C_m$ is independent of  $k$. This implies a uniform bound for $\|\eta^{[k]}\|_{H^m}$ on a $k$-independent interval $[0, T]$. The solution $\eta\in C([0, T]; H^m_e)$ is then obtained by taking the limit of $\eta^{[k]}$. We note in particular that \eqref{vanishing:lwp} is needed to pass $k\to \infty$ in the projections $\proj_k$ in \eqref{eq:Galerkin}. We refer to the proof of \cite[Theorem 4.1]{CCL} for further details. 

Let $T^0$ be the maximal existence time of $\eta$. From  \eqref{eest:Ga:1} and \eqref{eest:Ga:20}, we have 
\[
\mez \frac{d}{dt}\| \eta^{[k]}\|_{H^m}^2\le C_m \big(\| \na \eta\|_{L^\infty}+\| \na u\|_{L^\infty}+\| \rho_s'\|_{H^m}\big)\|\eta^{[k]}\|_{H^m}^2,
\]
and hence 
\[
\sup_{s\in [0, t]}\| \eta^{[k]}(s)\|_{H^m}\le \| \eta^{[k]}(0)\|_{H^m}\exp\left\{\int_0^t \big(\| \na \eta(s)\|_{L^\infty}+\| \na u(s)\|_{L^\infty}+\| \rho_s'\|_{H^m}\big)ds\right\},\quad t<T^0. 
\]
This implies the blowup criterion \eqref{blowup}. 
\end{proof}
For solutions $\eta\in C([0, T]; H^m_e)$ satisfying the extra regularity $\eta(t)\in H^{m+1}(\Omega)$, we shall establish in Proposition \ref{prop:aprioriestimate} an $H^m$ estimate which reveals the dissipation mechanism of the term $\rho_s'u_2$ in \eqref{eq:pmass}. To this end, we first prove the vanishing on $\p\Omega$ of certain  vertical derivatives of $u_1$ and $u_2$:
\begin{prop} \label{prop:hme}
Suppose that \(\eta\in C([0,T],H^m_e)\) is a solution of \eqref{eq:pipm}. Then we have   $u_2(\cdot, t)\in H^{m}_e$ and  $\p_2 u_1(\cdot, t)\in H^{m-2}_e$ for all \(t\in [0,T]\). If  \(\eta (\cdot, t)\in H^{m+1}(\Omega)\) for all \(t\in [0,T]\), then  $u_2(\cdot, t)\in H^{m+1}_e$ and $\p_2 u_1(\cdot, t)\in H^{m}_e$ for all $t\in [0, T]$. 
\end{prop}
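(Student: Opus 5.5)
Since $u=\na^\perp\Delta_D^{-1}\p_1\eta$, we have $u_2=-\p_1^2\Delta_D^{-1}\eta$ and $\p_2u_1=\p_2^2\Delta_D^{-1}\p_1\eta$. The plan is therefore to show that $\Delta_D^{-1}$ maps $H^m_e$ into $H^{m+2}_e$; more precisely, that $\psi:=\Delta_D^{-1}\eta$ satisfies $\p_2^{2j}\psi(\cdot,\pm1)=0$ for all $2j\le m+1$. Once this holds, the horizontal derivatives $\p_1^2$ and $\p_1$ do not affect vanishing of vertical even-order derivatives on the flat boundary $y=\pm1$ (tangential differentiation of a trace that vanishes identically in $x$). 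Hence $u_2=-\p_1^2\psi\in H^{m}_e$, and $\p_2u_1=\p_1\p_2^2\psi$ has even vertical derivatives $\p_1\p_2^{2j+2}\psi$ vanishing on the boundary for $2j+2\le m+1$. Regularity is then read off: $\psi\in H^{m+2}$ by elliptic regularity, so $u_2\in H^m$ and $\p_2u_1\in H^{m-1}\subset H^{m-2}$.

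The core step is an induction on $j$ using the equation $\p_2^2\psi=\eta-\p_1^2\psi$. For $j=0$ this is the Dirichlet condition. Suppose $\p_2^{2i}\psi(\cdot,\pm1)=0$ for $i\le j$ with $2j+2\le m+1$. Applying $\p_2^{2j}$ gives
\[
\p_2^{2j+2}\psi=\p_2^{2j}\eta-\p_1^2\p_2^{2j}\psi .
\]
On $y=\pm1$ the second term vanishes because $\p_2^{2j}\psi(\cdot,\pm1)\equiv0$ in $x$. The first term vanishes provided $2j\le 2l$, where $2l$ is the largest even number smaller than $m$, using $\eta\in H^m_e$. The index ranges match: $2j+2\le m+1$ means $2j\le m-1$, so $2j\le 2l$. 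Traces make sense since $\p_2^{2j}\eta\in H^{m-2j}\subset H^1$. This gives $\p_2^{2j}\psi=0$ for $2j\le 2l+2$, which is exactly what is needed for $u_2\in H^m_e$. For $\p_2u_1\in H^{m-2}_e$ we need $\p_1\p_2^{2i+2}\psi=0$ for $2i\le$ (largest even below $m-2$) $=2l-2$, which is covered.

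For the second statement, where $\eta(t)\in H^{m+1}$, first use the equation to upgrade $\eta$ to $H^{m+1}_e$. The main obstacle is the one extra vanishing condition needed when $m+1$ is odd, i.e.\ $m$ even. In that case $H^{m+1}_e$ requires $\p_2^{m}\eta(\cdot,\pm1)=0$, which $H^m_e$ does not give. (When $m$ is odd, the largest even number below $m+1$ is $m-1=2l$, so nothing new is needed.)

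For $m$ even, I would argue by time propagation. By \eqref{eq:pmass}, $\p_t\eta=-u\cdot\na\eta-\rho_s'u_2$. Differentiate by $\p_2^{m}$ and restrict to the boundary, using $u_2=0$ and the vanishing of $\p_2^{2i}u_2$, $\p_2^{2i}\eta$, $\p_2^{2i}\rho_s''$ there; Leibniz terms pair an even derivative with an odd one, as in the proof of \cref{prop:lwp}. This yields a linear transport/ODE along the boundary for $g:=\p_2^m\eta|_{y=\pm1}$, of the form $\p_tg+u_1\p_1g=a\,g+b\,\p_2^m u_2|_{\pm1}$. Here $\p_2^mu_2|_{\pm1}$ is itself controlled via the $\psi$-induction by $\p_1^2$ of $g$ (namely $\p_2^{m}\psi=\p_2^{m-2}\eta-\p_1^2\p_2^{m-2}\psi$, iterated). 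Since $g(0)=0$ for data in $H^{m+1}_e$, uniqueness for this linear boundary system gives $g\equiv0$.

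If this propagation proves delicate, the alternative is to note that the statement is only used for solutions approximated by Galerkin solutions in $H^\infty_e$ (\eqref{vanishing:lwp}), and pass to the limit in traces. With $\eta\in H^{m+1}_e$, the first argument with $m$ replaced by $m+1$ gives $u_2\in H^{m+1}_e$ and $\p_2u_1\in H^{m-1}_e$. To obtain $\p_2u_1\in H^m_e$, one extra even vanishing condition is needed, $\p_1\p_2^{m+2}\psi=0$ (for $m$ even). This follows from the same induction step once $\p_2^m\eta=0$ on the boundary.
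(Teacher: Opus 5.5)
Your argument for the first assertion is correct. It is essentially the paper's induction on even-order normal derivatives. The paper runs it through the pressure: applying $\partial_2^{2j-2}$ to the second Darcy component gives $\partial_2^{2j-1}q=0$, hence $\partial_2^{2j-1}u_1=0$ and, by incompressibility, $\partial_2^{2j}u_2=0$. You run the same induction through $\psi=\Delta_D^{-1}\eta$ and $\partial_2^2\psi=\eta-\partial_1^2\psi$, and the two are interchangeable.

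The second assertion has a genuine gap. You first upgrade $\eta(t)$ to $H^{m+1}_e$, i.e.\ show $\partial_2^m\eta(\cdot,\pm1)=0$ for even $m$, by propagating $g(0)=0$ along the boundary. But the hypothesis is only $\eta(t)\in H^m_e\cap H^{m+1}$, and this does not force $\partial_2^m\eta(\cdot,\pm1)=0$, even at $t=0$: for even $m$, $H^m_e$ constrains only normal derivatives of order $\le m-2$. So the boundary-transport argument, and likewise the Galerkin fallback, prove the statement only under an extra assumption that is not there.

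The missing observation is that no new vanishing of $\eta$ is needed. Your own induction already gives $\partial_2^{2j}\psi(\cdot,\pm1)=0$ for all $2j\le 2l+2$, using only $\partial_2^{2i}\eta(\cdot,\pm1)=0$ for $2i\le 2l<m$.

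\begin{itemize}
\item If $m=2l+2$ is even, $u_2\in H^{m+1}_e$ adds only the condition $\partial_2^m u_2=-\partial_1^2\partial_2^{2l+2}\psi=0$. Also, $\partial_2u_1\in H^m_e$ needs $\partial_1\partial_2^{2i+2}\psi=0$ only for $2i+2\le m$. Your ``extra'' condition $\partial_1\partial_2^{m+2}\psi=0$ is not among those defining $H^m_e$; for even $m$, $H^{m-1}_e$ and $H^m_e$ impose the same conditions.
\item If $m=2l+1$ is odd, the only new condition is $\partial_2^m u_1=\partial_1\partial_2^{2l+2}\psi=0$.
\end{itemize}

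All of these are already covered. The hypothesis $\eta(t)\in H^{m+1}$ enters only through $\psi\in H^{m+3}$, which makes these top-order traces meaningful.

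This is exactly how the paper concludes. It applies $\partial_2^{2l}$ to the Darcy law to get $\partial_2^{2l+1}q=0$ from $\partial_2^{2l}\eta=0$, which holds since $2l<m$. It then uses $H^{m+1}$ only for the traces of $\partial_2^{2l+1}u_1$ and, for even $m$, of $\partial_2^{2l+2}u_2$.
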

    
\begin{proof}
Since $u_2(\cdot, \pm 1)=0$ and $\eta\in H^m_e$,  the second component of  \eqref{eq:pdarcy} yields
    \begin{align*}
        \p_2q(.,\pm1) = -u_2(.,\pm1)-\eta(.,\pm1) =0.
    \end{align*}
    Thus, taking \(\p_2\) of the first component of \eqref{eq:darcy} gives
    \begin{align} \label{eq:tuo}
        \p_2u_1(.,\pm1) = -\p_2\p_1q(.,\pm1) =  - \p_1\p_2q(.,\pm1) = 0.
    \end{align}
    Combining this with the incompressibility condition, we obtain
    \begin{align} \label{eq:tut}
        \p_2^{2}u_2(.,\pm1) =  -\p_1\p_2u_1(.,\pm1)= 0.
    \end{align}    
    Let \(1\le j \le l\), where $2l$ is the largest even number that is less than $m$.  We  assume for the sake of  induction that 
    \bq\label{induction}
 \p_2^{2i-1}u_1(.,\pm1)=\p_2^{2i}u_2(.,\pm1) = 0\quad\forall 1\le i < j.
    \eq 
Taking \(\p_2^{2j-2}\) of the second component of \eqref{eq:pdarcy} and using \eqref{induction}, we obtain
    \begin{align*}
          \p_2^{2j-1}q(.,\pm1) = -\p_2^{2j-2}u_2(.,\pm1)  - \p_2^{2j-2}\eta(.,\pm1) = 0.
    \end{align*}
Then we take \(\p_2^{2j-1}\) of the first component of \eqref{eq:darcy} to have
\[
        \p_2^{2j-1}u_1(.,\pm1) = -\p_1\p_2^{2j-1}q(.,\pm1) = 0.
\]
    Hence, the incompressibility implies
\[
        \p_2^{2j}u_2(.,\pm1)  = -\p_1\p_2^{2j-1}u_1(.,\pm1) = 0.
\]
    We have shown that 
    \bq\label{induction:2}
   \p_2^{2i-1}u_1(.,\pm1)=\p_2^{2i}u_2(.,\pm1) = 0\quad\forall 1\le i \le l, 
    \eq
and hence $u_2\in H^m_e$ and $\p_2u_1\in H^{m-2}_e$. Taking \(\p_2^{2l}\) of the second component of \eqref{eq:pdarcy} and using \eqref{induction:2}, we obtain
    \begin{align*}
          \p_2^{2l+1}q(.,\pm1) = -\p_2^{2l}u_2(.,\pm1)  - \p_2^{2l}\eta(.,\pm1) = 0.
    \end{align*}
So far we have only used the assumption $\eta(t)\in H^m_e$. Now we assume $\eta(t)\in H^{m+1}(\Omega)$. Then we take \(\p_2^{2l+1}\) of the first component of \eqref{eq:pdarcy} to have
\[
        \p_2^{2l+1}u_1(.,\pm1) = -\p_1\p_2^{2l+1}q(.,\pm1) = 0,
\]
thereby concluding $\p_2u_1\in H^{m}_e$.   If $m$ is odd, then the  largest even integer less than $m+1$ is $2l$, so that $u_2\in H^{m+1}_e$ in view of \eqref{induction:2}. If $m$ is even, then the  largest even integer less than $m+1$ is $m=2l+2$. In this case, the incompressibility  implies 
\[
\p_2^{2l+2}u_2(\cdot, \pm 1)=-\p_1\p_2^{2l+1}u_1(\cdot, \pm 1)=0,
\]
where the traces make sense because $u\in H^{m+1}=H^{2l+2+1}$. Therefore, $u_2\in H^{m+1}_e$ in either case. 
\end{proof}
\begin{prop}\label{prop:aprioriestimate}
    Let \(2<m\in \Nn\) and $\rho_s\in \mathcal{S}_m$, with $\sup \rho_s'<-c_0<0$. Suppose that \(\eta\in C([0,T],H_e^{m})\) is a solution to \eqref{eq:pipm} such that  \(\eta(\cdot, t)\in H^{m+1}(\Omega)\) for all $t\in [0, T]$. Then there exist positive constants $A_1=A_1(m)$ and $A_2=A_2(\|\rho'_s\|_{H^{m+1}}, m)\) such that
    \begin{align}\label{estimate:thetahm}
        \mez \ddt \hnorm{\eta}{m}^2 \le -\frac34c_0\hnorm{u}{m}^2 
        + A_1\left(\pnorm{\na u_2}{\infty} \hnorm{\eta}{m}^2 +  \hnorm{u}{m}^2\hnorm{\eta}{m}\right)+A_2\tnorm{u}^2\quad\forall t\in (0, T).
    \end{align}
\end{prop}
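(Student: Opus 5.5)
We will prove \eqref{estimate:thetahm} by differentiating $\mez\|\p^\alpha\eta\|_{L^2}^2$ for every multi-index $|\alpha|\le m$ and summing. Lower orders $|\alpha|<m$ will be absorbed into $A_2\|u\|_{L^2}^2$ and the $\|u\|_{H^m}^2$-terms by interpolation; if needed, the $L^2$ part comes from \eqref{eest:Ga:0}. The equation gives
\[
\mez\ddt\|\p^\alpha\eta\|_{L^2}^2=-\int_\Omega \p^\alpha(u\cdot\na\eta)\,\p^\alpha\eta-\int_\Omega \p^\alpha(\rho_s'u_2)\,\p^\alpha\eta .
\]

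\textbf{Linear term.} The dissipation is produced from $-\int\rho_s'\,\p^\alpha u_2\,\p^\alpha\eta$. We substitute $\eta=-u_2-\p_2 q$ from Darcy's law \eqref{eq:pdarcy}. This gives $-\int\rho_s'\p^\alpha u_2\,\p^\alpha\eta=\int\rho_s'|\p^\alpha u_2|^2+\int\rho_s'\p^\alpha u_2\,\p^\alpha\p_2q$.

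In the pressure term we write $\p_2 q$ using $u_1=-\p_1 q$ and $\p_1u_1=-\p_2u_2$, and integrate by parts horizontally and vertically. The goal is a term $\int\rho_s'|\p^\alpha u_1|^2$ plus commutators in which at least one derivative falls on $\rho_s'$. Together with $\rho_s'<-c_0$, this yields $-c_0\|\p^\alpha u\|_{L^2}^2$. The commutators, where $\p^\beta\rho_s'$ with $\beta\ne0$ appears, involve at most $m-1$ derivatives of $u$ on one factor. They are bounded by $\eps$-Young and interpolation, $\|u\|_{H^{m-1}}\le \delta\|u\|_{H^m}+C_\delta\|u\|_{L^2}$, which absorbs them into $\frac14c_0\|u\|_{H^m}^2+A_2\|u\|_{L^2}^2$. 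The same applies to the terms $\p^\beta\rho_s'\,\p^{\alpha-\beta}u_2$ coming from the Leibniz expansion of $\p^\alpha(\rho_s'u_2)$.

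\textbf{Boundary terms.} This is the main obstacle. Each vertical integration by parts produces traces on $y=\pm1$ of products such as $\p_2^{j}\rho_s'\,\p_2^{a}u_2\,\p_2^{b}q$. Here we use the parity hierarchy. By Proposition \ref{prop:hme} with $\eta\in H^{m+1}$, we have $u_2\in H^{m+1}_e$ and $\p_2u_1\in H^m_e$, and the proof shows $\p_2^{2j+1}q=0$ on $\p\Omega$ at the corresponding orders. The hypothesis $\rho_s''\in H^m_e$ makes the odd derivatives $\p_2^{2j+1}\rho_s'$ vanish on $\p\Omega$.

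For each boundary product we check that the total parity forces one factor to have a vanishing trace. An even number of $\p_2$ on $u_2$ makes that factor vanish; an odd number on $q$ or on $\rho_s'$ makes that one vanish. The extra assumption $\eta\in H^{m+1}$ guarantees these traces are defined at top order. The plan is to organize $\alpha=(\alpha_1,\alpha_2)$ by the parity of $\alpha_2$ and perform the integrations in the order that always places the odd index on $q$ or $\rho_s'$. This bookkeeping is where errors are likely, and it is exactly why $\rho_s''\in H^m_e$ is needed rather than only $H^{m-2}_e$.

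\textbf{Nonlinear term.} We write $\p^\alpha(u\cdot\na\eta)=u\cdot\na\p^\alpha\eta+[\p^\alpha,u]\cdot\na\eta$. The transport part vanishes by $\dv u=0$ and $u\cdot n=0$, with no further boundary terms.

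For the commutator we avoid the crude bound $\|\na u\|_{L^\infty}\|\eta\|_{H^m}^2$, since $\|\na u_1\|_{L^\infty}$ is not allowed in \eqref{estimate:thetahm}. We split the top-order terms into:
\begin{itemize}
\item[(a)] those with $u_2$ differentiated: the term $\p^\beta u_2\,\p_2\p^{\alpha-\beta}\eta$ with $|\beta|=1$ gives $\|\na u_2\|_{L^\infty}\|\eta\|_{H^m}^2$;
\item[(b)] terms with $\p u_1\,\p_1\p^{\alpha-\beta}\eta$, where $\p_1\eta=\p_1(-u_2-\p_2q)$ and $\p_1\p_2 q=-\p_2u_1$.
\end{itemize}
In (b) we rewrite the factor via Darcy's law, so that each term contains two factors of $u$ and one of $\eta$, bounded by $\|u\|_{H^m}^2\|\eta\|_{H^m}$. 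Any remaining contribution $\p_1u_1=-\p_2u_2$ is of type (a).

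Intermediate terms are handled by Gagliardo--Nirenberg, using $m>2$, with bound $\|u\|_{H^m}^2\|\eta\|_{H^m}+\|\na u_2\|_{L^\infty}\|\eta\|_{H^m}^2$. The constants in these bounds give $A_1(m)$. Summing all pieces and absorbing the $\frac14c_0$ loss yields \eqref{estimate:thetahm}.
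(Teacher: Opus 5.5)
There is a genuine gap in your treatment of the transport term. Take $\alpha=(0,m)$. The commutator then contains $\int_\Omega \p_2^i u_2\,\p_2^{m-i+1}\eta\,\p_2^m\eta$ for $2\le i\le m$. No H\"older/Gagliardo--Nirenberg bound on the individual factors puts this in the admissible form, because any such bound leaves $\|u_2\|_{W^{i,p}}\|\eta\|_{H^m}^2$. Both $\eta$-factors see the $x$-average of $\eta$, which generates no velocity. For $i\ge 2$, $\|\p_2^iu_2\|_{L^p}$ is not controlled by $\|\na u_2\|_{L^\infty}$; to see this, test with $\eta=\bar\eta(y)+\epsilon\tilde\eta$, $\tilde\eta$ highly oscillatory in $y$. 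Absorbing into the dissipation would leave $\|\eta\|_{H^m}^4$, which is not allowed.

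What is missing is the mechanism that turns an $\eta$-factor into a $u$-factor via $\p_1\eta=\Delta\psi$. If $\alpha_1\ge 1$ this is immediate, since $\|\p^\alpha\eta\|_{L^2}\le\|\p_1\eta\|_{H^{m-1}}\lesssim\|u\|_{H^m}$; you should state this. If $\alpha=(0,m)$, you must write $u_2=-\p_1\psi$ and integrate by parts in $x$. The piece with $\p_1$ landing on $\p_2^{m-i+1}\eta$ is then fine. The piece $\int_\Omega \p_2^i\psi\,\p_2^{m-i+1}\eta\,\p_1\p_2^{m}\eta$, however, has one derivative too many. It needs a further integration by parts in $y$, which produces
\[
\pm\int_{y=\pm1}\p_2^i\psi\,\p_2^{m-i+1}\eta\,\p_1\p_2^{m-1}\eta\,dx .
\]
This vanishes only by a parity check:
\begin{itemize}
\item if $m$ is odd, $\p_2^{m-1}\eta=0$;
\item if $m$ is even and $i$ is odd, $\p_2^{m-i+1}\eta=0$;
\item if $m$ and $i$ are both even, $\p_2^i\psi=\p_2^{i-2}\p_2u_1=0$, using $\p_2u_1\in H^m_e$.
\end{itemize}
So your assertion that the nonlinear part creates no boundary terms is wrong. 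The parity hierarchy is needed there as well, and this is where the paper's proof spends most of its effort on the transport term.

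Your Darcy-substitution route for the linear term is a workable alternative to the paper's. The paper instead moves $\p_1$ from $u_2=-\p_1\psi$ onto $\eta$, uses $\p_1\eta=\Delta\psi$, and integrates by parts once, leaving the single boundary term $\int_{y=\pm1}D^m(\rho_s'\psi)\,\p_2D^m\psi\,dx$. The traces your route produces do vanish by the parity facts you list.

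Two of your absorption claims are incorrect as stated, though. First, the commutator $\int_\Omega\rho_s''\,\p^\alpha u_2\,\p^\alpha q$ contains $\p_2^{m-1}\eta$ when $\alpha=(0,m)$, because $\p_2q=-u_2-\eta$. So it is not a term with only $m-1$ derivatives of $u$. Second, the Leibniz terms $\int_\Omega\p^\beta\rho_s'\,\p^{\alpha-\beta}u_2\,\p^\alpha\eta$ are paired with $\p^\alpha\eta$. Cauchy--Schwarz gives $\|u\|_{H^m}\|\eta\|_{H^{m-1}}$, respectively $\|u\|_{H^{m-1}}\|\eta\|_{H^m}$, and neither can be absorbed. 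Both need one more integration by parts in $x$ via $u_2=-\p_1\psi$.

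Likewise, the $L^2$ level cannot come from \eqref{eest:Ga:0}. Its bound $C\|\rho_s'\|_{L^\infty}\|\eta\|_{L^2}^2$ is not of the form in \eqref{estimate:thetahm}. Use instead $-\int_\Omega\rho_s'u_2\eta=\int_\Omega\rho_s'|u|^2+\int_\Omega\rho_s''\psi\,u_1$.
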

\begin{proof} 
By Proposition \ref{prop:hme} and the additional assumption
$\eta(\cdot,t)\in H^{m+1}(\Omega)$, we have
\[
u_2(\cdot, t)\in H^{m+1}_e,\quad  \p_2 u_1(\cdot, t)\in H^{m}_e\quad\forall t\in [0, T].
\]
To prove \eqref{estimate:thetahm}, we start with the basic $L^2$ estimate 
    \begin{align}\label{estimate:thetal2}
        \mez \ddt \tnorm{\eta}^2 = - \int_\Omega \rho'_s(y) \,u_2 \, \eta \le \pnorm{\rho'_s}{\infty} \tnorm{u}\tnorm{\eta}.
    \end{align}
Let $D^m$ be an arbitrary $x$-partial derivative of order $m$. We have
    \begin{align} \label{eq:thetak}
        \mez \ddt \tnorm{D^m \eta}^2 = - \int_\Omega D^m (u\cdot\na\eta) \, D^m \eta + \int_\Omega D^m(-\rho'_s\, u_2)\, D^m \eta =: I + II.
    \end{align}
   {\bf 1.}  Expanding \(I\) results in
    \begin{align*}
        I = \sum_{1 \le i\le m} a_{i,m} \int_\Omega D^iu_1\, D^{m-i}\pay \eta \; D^m \eta + a_{i, m} \int_\Omega D^i u_2\, D^{m-i} \pad\eta \, D^m \eta.
    \end{align*}
    By Lemmas \ref{lemm:stream} and \ref{lemm:BS}, $u$ admits  a stream function \(\psi\) vanishing on $\p\Omega$ and satisfying $\Delta \psi=\p_1 \eta$. To bound the first term in $I$, we note 
           \begin{equation} \label{ineq:pthetahk}
        \hnorm{\pay \eta}{m-i} = \hnorm{\Delta \psi}{m-i}= \hnorm{\na\cdot (\na \psi)}{m-i}  \le C_m \hnorm{u}{m-i+1}.
    \end{equation}
 For  \(1\le i\le m\), we can choose \( p\in [2, \infty]\) and $q\in [2, \infty]$ such that \(\frac{1}{p} + \frac{1}{q} = \mez\) and the embeddings \(H^m \hookrightarrow W^{i,p}, \; H^{m-1}\hookrightarrow W^{m-i,q}\) hold.
    Using H\"{o}lder's inequality followed by the above embeddings, we get
    \bq\label{Lpqest}
    \pnorm{D^i u_1}{p}\, \pnorm{D^{m-i} \pay \eta}{q}\le C\| u\|_{H^m}^2,
    \eq
    and hence 
    \begin{align*}
    \left|\int_\Omega D^i u_1 \, D^{m-i}\pay \eta \, D^m \eta \right|
    \le \pnorm{D^i u_1}{p}\, \pnorm{D^{m-i} \pay \eta}{q}\, \hnorm{D^m\eta}{2} \le \hnorm{u}{m}^2\,\hnorm{\eta}{m}.
    \end{align*}
  Next, we consider  the second term in \(I\). If \(i = 1,\) then
    \[
    \left|\int_\Omega Du_2\, D^{m-1}\pad \eta \, D^m \eta\right| \le \pnorm{\na u_2}{\infty} \, \hnorm{\eta}{m}^2.
    \]
Next, we consider \(i\ge 2\) and write  \(D^m = \pay^a \pad^b\), $a+b=m$. If \(a > 0\), then
\[
\tnorm{D^m \eta} =\tnorm{\p_1^{a-1}\p_2^b \p_1\eta}\le C_m\|\p_1\eta \|_{H^{m-1}}\le  C_m \hnorm{u}{m},
\]
 and the same choice of \(p,q\) above yields
    \[
   \left| \int_\Omega D^i u_2\, D^{m-i}\p_2\eta \, D^m \eta \right|
    \le C_m \hnorm{u}{m}^2\hnorm{\eta}{m}.
    \]
    Now, consider the case \(D^m = \pad^m\). Writing $u_2=-\p_1\psi$, we can integrate by parts in \(\pay\) to  obtain 
    \begin{align}
        \int_\Omega  D^i u_2\; D^{m-i}\pad \eta \; D^m \eta 
        &= \underbrace{\int_\Omega \p_2^i \psi \; \p_2^{m-i+1} \pay \eta \; \p_2^m \eta}_{K_j} -  \underbrace{\int _\Omega \pad^i \psi \; \pad^{m-i+1} \eta \; \pad^m\pay \eta}_{J_i} \label{term:transport22}.
    \end{align}
By an  $L^p$-$L^q$ estimate  analogous to \eqref{Lpqest} (with $i-1$ in place of $i$), we have 
\[
|K_j|=\left|\int_\Omega \p_2^{i-1} u_1 \; \p_2^{m-(i-1)}\pay \eta \; \p_2^m \eta\right|\le  \hnorm{u}{m}^2\, \hnorm{\eta}{m}. 
\]
    To bound $J_i$, we integrate by parts in \(\pad\):
    \begin{align*}
        J_i &=- \int_\Omega \pad^{i+1}\psi\; \pad^{m-i+1}\eta \; \pad^{m-1}\pay \eta  - \int_\Omega \pad^i \psi \; \pad^{m-i+2}\eta \; \pad^{m-1}\pay \eta  \pm \int_{y=\pm1} \pad ^i \psi \; \pad^{m-i+1}\eta \; \pad^{m-1}\pay \eta \, dx.
    \end{align*}
    We claim that the above boundary term  vanishes. Let us separate the cases based on the parity of \(m\) and \(i\):
    \begin{itemize}
        \item If \(m\) is odd, then  \(\pad^{m-1}\pay \eta(.,\pm1)=0\) since $m-1<m$ is even and \(\eta\in H^m_e(\Omega)\cap H^{m+1}(\Omega)\). 
        \item Let \(m\) be even. 
        \begin{itemize}
            \item If \(2\le i\) is odd, then  \(\pad^{m-i+1}\eta (.,\pm1)=0\) since $m-i+1<m$ is even and \(\eta\in H^m_e(\Omega)\).
            \item If \(2\le i\) is even, then \(\pad^i \psi (.,\pm1) = \pad^{i-2}\p_2u_1(.,\pm1)=0\) since $i-2<m-1$ is even and $\p_2u_1\in H^{m}_e\subset H^{m-1}_e$.
        \end{itemize}
    \end{itemize}
   On the other hand, the first two terms in $J_i$ can be estimated using  H\"{o}lder's inequality and Sobolev embeddings as before, giving 
    \[
|J_i| \le C_m \hnorm{u}{m}^2\,\hnorm{\eta}{m}.
    \]
   We have proven that
\bq\label{est:I}
       |I| \le C_m (\hnorm{u}{m}^2\hnorm{\eta}{m} + \pnorm{\na u_2}{\infty} \hnorm{\eta}{m}^2).
\eq
   {\bf 2.}  As for $II$, we  integrate by parts in \(\pay\) and recall \(\pay \eta = \Delta \psi\) to obtain
    \begin{multline*}
        II=\int_\Omega D^m(\rho'_s\, \pay \psi)\, D^m \eta = -\int_\Omega D^m(\rho'_s\, \psi) \, D^m\pay \eta \nonumber  \\ 
         = -\int_\Omega D^m (\rho'_s\, \psi) \, D^m \Delta \psi \nonumber 
        = \int_\Omega \na D^m (\rho'_s\,\psi) \, \cdot \na D^m \psi \mp \int_{y=\pm1} D^m(\rho'_s\, \psi) \, \pad D^m \psi\, dx. \label{eq:termii}
\end{multline*}
    We claim that the boundary term vanishes. Expanding results in 
    \[
    \int_{y=\pm1} D^m(\rho'_s\, \psi) \, \pad D^m \psi\, dx
     = \sum_{0\le i\le m} a_{i,m} \int_{y=\pm1} D^i\rho'_s\; D^{m-i}\psi \; D^m \pad \psi \, dx.
    \]
    Let \(D^m=\p_1^a\p_2^b\) and \(D^i = \p_1^c\p_2^d\), where $c\le a$ and $d\le b$.
    \begin{itemize}
        \item If \(b\) is odd, then \( D^m\p_2\psi(.,\pm1)= \p_1^a\p_2^{b-1}\p_2u_1(.,\pm1) = 0\) since $b-1<m$ is even and $\p_2 u_1\in H^m_e$.
        \item Let \(b\) be even.
        \begin{itemize}
            \item If \(d\) is odd, then \(D^i \rho'_s(.,\pm1) =\p_1^c\p_2^{d+1}\rho_s=\p_1^c\p_2^{d-1}\rho''_s= 0\) since $d-1<m$ is even and  \(\rho_s'' \in H^{m}_e\) for $\rho_s\in \mathcal{S}_m$. 
            \item Let \(d\) be even. Then $D^{m-i}\psi(.,\pm1) =\p_1^{a-c}\p_2^{b-d}\psi(\cdot, \pm 1)$, where $b-d$ is even.
                        \begin{itemize}
                        \item If $a=c$ and $ b=d$, then $D^{m-i}\psi(.,\pm1)=\psi(\cdot, \pm 1)=0$ by Lemma \ref{lemm:stream}. 
                \item If \(a>c\), then \(D^{m-i}\psi(.,\pm1) =- \p_1^{a-c-1}\p_2^{b-d}u_2(\cdot, \pm 1)=0\) since $b-d<m+1$ is even and $u_2\in H^{m+1}_e$.
                \item If \(b>d\), then \(D^{m-i} \psi(.,\pm1) = \pay^a \pad^{b-d-2}\p_2u_1(.,\pm1) = 0\) since  $b-d-2<m-1$ is even and $\p_2u_1\in H^{m}_e\subset H^{m-1}_e$.
            \end{itemize}
        \end{itemize}
    \end{itemize}
It follows that
\bq\label{est:II1}
       II= \int_\Omega \rho_s'\, |\na D^m \psi|^2 + \int_\Omega [\na D^m, \rho'_s]\psi\cdot \na D^m \psi  \le -c_0\hnorm{u}{m}^2  + \|\rho'_s\|_{H^{m+1}}\hnorm{\psi}{m}\hnorm{\psi}{m+1},
\eq
    where we used the assumption \(\sup\rho'_s < -c_0<0\) and bounded the \(L^2\) norm of the commutator by \(\|\rho'_s\|_{H^{m+1}}\hnorm{\psi}{m}\). Since
\[
        \p_2 \int_\T \psi(x,y)\,dx = \int_\T u_1(x,y)\,dx = -\int_\T \p_1 p(x,y)\,dx = 0
\]
and  \(\psi(x,\pm1) =0\), we have $  \int_\T \psi(x,y)\,dx  =0$ for all $y$. Hence,  Poincar\'{e}'s inequality yields
    \bq\label{Poincare:psi}
    \tnorm{\psi}\le C\tnorm{\pay \psi} \le C \tnorm{u},\quad \| \psi\|_{H^{m+1}}\le C\| u\|_{H^m}.
    \eq
By interpolating $\| \psi\|_{H^m}$ between $\| \psi\|_{L^2}$ and $\|\psi\|_{H^{m+1}}$, we deduce from \eqref{est:II1} and \eqref{Poincare:psi} that
    \bq\label{est:II}
 II  \le -\frac{3}{4}c_0\hnorm{u}{m}^2  + C\|u\|_{L^2}^2,\quad C=C(\|\rho'_s\|_{H^{m+1}}, m). 
     \eq
   Finally, \eqref{estimate:thetahm} follows from \eqref{est:I} and \eqref{est:II}.
\end{proof}
Next, we prove estimates for $\| \p_1^2\eta\|^2_{L^2}$ and $\| \p_1\p_2\eta\|^2_{L^2}$ with dissipative terms $ \tnorm{\pay \na u_2}^2 $ and $\hnorm{u_2}{2}^2$, respectively. These will subsequently lead to the $t^{-m}$-time-average decay \eqref{bound:h2decay} of $\| u_2\|_{H^2}^2$, and then the time-integrability \eqref{est:intu2} of $\| \na u_2\|_{L^\infty}$. 
\begin{prop}\label{prop:h2difineq}
Let $2<m\in \Nn$ and $\rho_s\in \mathcal{S}_m$, with $\sup \rho_s'<-c_0<0$. Suppose that  $\eta\in C([0, T], H^m_e)$ is a solution of \eqref{eq:pipm}. There exist positive constants $c_1=c_1(m)$ and  $C_1=C_1(c_0, m)$ and \(C_2= C_2(\|\rho_s''\|_{W^{1, \infty}}, c_0, m)\) such that the following inequalities hold on $(0, T)$:
\bq\label{p11theta} 
            \ddt \tnorm{\pay^2\eta}^2 \le \Big(-\frac{7}{4}c_0+c_1\| \eta\|_{H^m}\Big) \tnorm{\pay \na u_2}^2 + C_1(\hnorm{u}{m}^2 + \pnorm{\p_2u_2}{\infty})\tnorm{\pay^2 \eta}^2 + C_2\tnorm{u_2}^2 
            \eq
            and
            \bq  \label{p12theta}
            \begin{aligned}
            \ddt \tnorm{\p_1\p_2 \eta}^2 &\le \Big(-\frac74c_0+c_1\| \eta\|_{H^m}\Big)\hnorm{u_2}{2}^2 + C_1(\hnorm{u}{m}^2+\pnorm{\p_2u_2}{\infty})\tnorm{\p_1\p_2\eta}^2 \\ 
           & \quad +c_0 \tnorm{\p_1\na u_2}^2 +  c_1\tnorm{\p_1\na u_2} \tnorm{\p_1\p_2\eta}\hnorm{\eta}{m} +C_2 \tnorm{u}^2.
            \end{aligned}
\eq
\end{prop}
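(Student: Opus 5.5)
We apply $\p_1^2$ (resp. $\p_1\p_2$) to \eqref{eq:pmass}, pair with $\p_1^2\eta$ (resp. $\p_1\p_2\eta$) in $L^2$, and split the right-hand side into a linear part $-\int \p_1^2(\rho_s'u_2)\,\p_1^2\eta$ and a transport part $-\int \p_1^2(u\cdot\na\eta)\,\p_1^2\eta$. The dissipation comes from the linear part through the stream function: $u_2=-\p_1\psi$ and $\p_1\eta=\Delta\psi$, so that $u_2$ solves $\Delta u_2=-\p_1^2\eta$ with $u_2=0$ on $\p\Omega$. This is the same structure used in step 2 of Proposition \ref{prop:aprioriestimate}, but at the level of $u_2$ rather than $\psi$.

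\textbf{Estimate \eqref{p11theta}.} Since $\rho_s'$ depends only on $y$, the linear part equals
\[
\int \rho_s'\,\p_1^2u_2\,\Delta u_2 .
\]
Integrating by parts in $x$ turns this into $-\int\rho_s'\,\p_1u_2\,\Delta\p_1u_2$. We then integrate by parts once more, noting that $\p_1u_2=0$ on $\p\Omega$ kills the boundary term. This gives
\[
\int\rho_s'|\na\p_1u_2|^2+\int\rho_s''\,\p_1u_2\,\p_2\p_1u_2
\le -c_0\|\p_1\na u_2\|^2_{L^2}+\tfrac18c_0\|\p_1\na u_2\|^2_{L^2}+C\|\p_1u_2\|^2_{L^2}.
\]
It remains to control $\|\p_1u_2\|_{L^2}$ by $\|u_2\|_{L^2}$. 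We interpolate, $\|\p_1u_2\|^2\le\|u_2\|\,\|\p_1^2u_2\|$, absorb with Young's inequality, and use the factor $2$ from $\ddt$. This produces $-\tfrac74c_0\|\p_1\na u_2\|^2+C_2\|u_2\|^2$.

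For the transport part, we expand by Leibniz. The top-order term $\int u\cdot\na\p_1^2\eta\,\p_1^2\eta$ vanishes by incompressibility and $u\cdot n=0$. The remaining terms are $\p_1u\cdot\na\p_1\eta$ and $\p_1^2u\cdot\na\eta$. Terms carrying $u_2$ are bounded by $\|\p_2u_2\|_{L^\infty}\|\p_1^2\eta\|^2$, or, after converting $\p_1\eta$-type factors via $\p_1^2\eta=-\Delta u_2$, by $\|\eta\|_{H^m}\|\p_1\na u_2\|^2$. Terms carrying $u_1$ are handled with $\p_1u_1=-\p_2u_2$ and Sobolev embeddings, giving $\|u\|^2_{H^m}\|\p_1^2\eta\|^2$ after Young. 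This yields \eqref{p11theta}.

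\textbf{Estimate \eqref{p12theta}.} The linear part is now $\int\p_1\p_2(\rho_s'u_2)\,\p_1\p_2\eta$. We use $\p_1\eta=\Delta\psi$, so that $\p_1\p_2\eta=-\p_2\Delta\p_1^{-1}\cdots$; more concretely, after one integration by parts in $x$ we get $-\int\p_2(\rho_s'u_2)\,\p_2\p_1^2\eta$. Substituting $\p_1^2\eta=-\Delta u_2$ gives
\[
\int\rho_s'\,\p_2u_2\,\p_2\Delta u_2+\text{lower order}.
\]
We integrate by parts in $\p_2$ and across the boundary. The boundary terms involve $\p_2^2u_2$, which vanishes on $\p\Omega$ by Proposition \ref{prop:hme}, and $\rho_s''$, which vanishes on $\p\Omega$ since $\rho_s''\in H^m_e$. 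This leaves $\int\rho_s'|\na\p_2u_2|^2$.

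Combining with the $\p_1\na u_2$ part, which is controlled with the slack term $c_0\|\p_1\na u_2\|^2$ allowed in \eqref{p12theta}, we obtain $-\tfrac74c_0\|u_2\|^2_{H^2}$ up to lower-order terms. Elliptic regularity for the Dirichlet problem $\Delta u_2=-\p_1^2\eta$ and Poincaré are used to assemble the full $H^2$ norm. The transport part is handled as before. The one new feature is the term $\p_2u\cdot\na\p_1\eta$, whose $\p_2u_1\,\p_1^2\eta$ piece produces the cross term $\|\p_1\na u_2\|\,\|\p_1\p_2\eta\|\,\|\eta\|_{H^m}$.

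The main obstacle is the boundary bookkeeping in the $\p_1\p_2$ estimate. Identifying exactly which traces vanish there relies on Proposition \ref{prop:hme} and $\rho_s''\in H^m_e$. A secondary difficulty is obtaining the full $\|u_2\|_{H^2}$ dissipation rather than only $\|\na\p_2u_2\|$. We will first try to close this by writing $\|u_2\|^2_{H^2}\lesssim\|\Delta u_2\|^2$ together with an integration-by-parts identity for the mixed derivatives.
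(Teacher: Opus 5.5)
Your treatment of the linear (dissipative) terms is the paper's. For \eqref{p11theta} you integrate by parts using $\p_1u_2=0$ on $\p\Omega$, then use $\tnorm{\p_1u_2}^2\le\tnorm{u_2}\tnorm{\p_1^2u_2}$. For \eqref{p12theta} you use $\p_2^2u_2=0$ on $\p\Omega$ and the slack term. The identity $\Vert\na^2u_2\Vert_{L^2}^2=\tnorm{\na\p_2u_2}^2+\tnorm{\p_1\na u_2}^2$ already gives the full $\hnorm{u_2}{2}$, so no elliptic regularity is needed.

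There are two small slips in that part.
\begin{itemize}
\item The linear term is $-\int\p_1\p_2(\rho_s'u_2)\,\p_1\p_2\eta=-\int\p_2(\rho_s'u_2)\,\Delta\p_2u_2$. Your intermediate $\int\rho_s'\p_2u_2\,\p_2\Delta u_2$ has the opposite sign and would give $-\int\rho_s'|\na\p_2u_2|^2$.
\item The vanishing of $\rho_s''$ on $\p\Omega$ is not needed there, since that boundary term carries $u_2=0$.
\end{itemize}

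\textbf{The gap.} It is in the transport part of \eqref{p11theta}, specifically the term $-\int\p_1^2u_2\,\p_2\eta\,\p_1^2\eta$. The only dissipation available is $\tnorm{\p_1\na u_2}^2$, and it does not control $\tnorm{\p_1^2\eta}^2=\tnorm{\Delta u_2}^2$. On the mode $e^{ipx}b_q(y)$ the ratio of the two is $p^2/(p^2+(q\pi/2)^2)$, which tends to $0$ as $q\to\infty$.

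So converting via $\p_1^2\eta=-\Delta u_2$ and applying H\"older gives $\tnorm{\p_1\na u_2}\,\hnorm{\eta}{m}\,\tnorm{\p_1^2\eta}$. Young then produces $\hnorm{\eta}{m}^2\tnorm{\p_1^2\eta}^2$. That coefficient is merely bounded, not time-integrable. It is therefore not of the form $C_1(\hnorm{u}{m}^2+\pnorm{\p_2u_2}{\infty})$ that the statement requires and that Proposition \ref{prop:h2decay} exploits through $\int_0^{T^*}h_1<1$.

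\textbf{The missing idea.} First integrate by parts in $x$, which gives $\int\p_1u_2\,\p_2\eta\,\p_1^3\eta-\int\p_1u_2\,\p_1\p_2\eta\,\p_1^2\eta$.
\begin{itemize}
\item In the first term, $\p_1u_2\,\p_2\eta\in H^1_0$ because $\p_1u_2$ vanishes on $\p\Omega$. Also $\Vert\p_1^3\eta\Vert_{H^{-1}}\le\tnorm{\p_1\na u_2}$, since $\p_1^3\eta=-\p_1\Delta u_2$. Together these give $c_1\hnorm{\eta}{m}\tnorm{\p_1\na u_2}^2$.
\item In the second term you must gain a derivative: $\pnorm{\p_1\p_2\eta}{q}=\pnorm{\p_2\Delta\psi}{q}\lesssim\hnorm{u}{m}$. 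Young's coefficient is then $\hnorm{u}{m}^2$ rather than $\hnorm{\eta}{m}^2$.
\end{itemize}

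\textbf{Cross term in \eqref{p12theta}.} The same obstruction shows you have attributed it to the wrong piece. $\p_2u_1\,\p_1^2\eta$ cannot yield $\tnorm{\p_1\na u_2}\tnorm{\p_1\p_2\eta}\hnorm{\eta}{m}$. It should instead be bounded by $\pnorm{\p_2u_1}{\infty}\tnorm{\Delta u_2}\tnorm{\p_1\p_2\eta}\lesssim\hnorm{u}{m}\hnorm{u_2}{2}\tnorm{\p_1\p_2\eta}$ and absorbed into the $\hnorm{u_2}{2}^2$ dissipation. The cross term actually comes from $\p_1u_2\,\p_2^2\eta$, via $\pnorm{\p_1u_2}{p}\pnorm{\p_2^2\eta}{q}$.

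So the real difficulty is matching each transport term to the weak dissipation $\tnorm{\p_1\na u_2}$ with time-integrable coefficients. It is not the boundary bookkeeping of the $\p_1\p_2$ estimate, which reduces to $\p_2^2u_2=0$ on $\p\Omega$.
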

\begin{proof}
   {\bf 1. } We take  \(\p_1^2\) of \eqref{eq:pmass} and multiply the resulting equation  by \(\p_1^2\eta\) to have
    \begin{align}
        \mez \ddt \tnorm{\p_1^2\eta}^2 = -\int_\Omega \p_1^2(u\cdot\na\eta)\, \p_1^2\eta - \int_\Omega  \rho_s'\p_1^2u_2\, \p_1^2\eta =: A + B.
    \end{align}
    Since \(u\) is incompressible, an integration by parts gives
    \begin{align*}
    A &= -\int_\Omega (\p_1^2u \cdot \na \eta + 2\p_1 u \cdot \na  \p_1\eta)\, \p_1^2\eta \\ 
    &= -\int_\Omega (\p_1^2u_1\, \p_1\eta + 2\p_1u_1\, \p_1^2\eta)\, \p_1^2\eta  -\int_\Omega (\p_1^2 u_2 \, \p_2\eta + 2\p_1u_2\, \p_2\p_1\eta) \, \p_1^2\eta  =: A_1 + A_2.
    \end{align*}
    Using Cauchy-Schwarz's inequality followed by Young's inequality, we bound 
    \begin{align*}
        |A_1| &\le \tnorm{\p_1^2u_1}\pnorm{\p_1\eta}{\infty}\tnorm{\p_1^2\eta} + 2 \pnorm{\p_1u_1}{\infty} \tnorm{\p_1^2\eta}^2 \\ 
        &\le \frac{\d}{2} \tnorm{\p_1^2u_1}^2 + C_\d \tnorm{\p_1^2\eta}^2\pnorm{\p_1\eta}{\infty}^2 + 2\pnorm{\p_1u_1}{\infty}\tnorm{\p_1^2\eta}^2.
    \end{align*}
Combining  \eqref{ineq:pthetahk} with the embedding $H^{m-1}(\Omega)\hookrightarrow L^\infty(\Omega)$ yields $\| \p_1\eta\|_{L^\infty}\le C\| u\|_{H^m}$. Consequently, invoking  the incompressibility of \(u\), we obtain 
    \begin{align} \label{bound:a111}
        |A_1| \le \frac{\d}{2}\tnorm{\p_1\p_2 u_2}^2 + C_\delta(\hnorm{u}{m}^2 + \pnorm{\p_2u_2}{\infty}) \tnorm{\p_1^2\eta}^2.
    \end{align}
    As for $A_2$, we integrate by parts in \(x\):
    \[
    A_2 = \int_\Omega \p_1u_2\, \p_2\eta \, \p_1^3\eta - \int _\Omega \p_1u_2 \, \p_2\p_1\eta \, \p_1^2\eta =:A_{21} + A_{22}.
    \]
    Since \(\p_1u_2=0\) on $\p\Omega$, we have  $\p_1u_2\, \p_2\eta\in H^1_0(\Omega)$, and hence
    \begin{align*}
    |A_{21}| &\le \| \p_1u_2\, \p_2\eta \|_{H^1_0} \|\p_1^3\eta\|_{H^{-1}} \le C \|\na(\p_1u_2\, \p_2\eta)\|_{L^2} \hnorm{\p_1^3 \eta}{-1}.
    \end{align*}
       Since \(m > 2,\) there exists $q>2$ and $p\in (2, \infty)$ such that \(H^m \hookrightarrow W^{2,q}\) and $\frac{1}{p}+\frac{1}{q}=\mez$. The embedding $H^1(\Omega)\hookrightarrow L^p(\Omega)$ holds and we have
\begin{align*}
    \|\na(\p_1u_2\, \p_2\eta)\|_{L^2} &\le \tnorm{\p_1\na u_2}\|\p_2 \eta\|_{L^\infty}+ \pnorm{\p_1u_2}{p}\|\na \p_2\eta\|_{L^q}  \\
    &\le \tnorm{\p_1\na u_2}\|\p_2 \eta\|_{L^\infty}+ C \|\p_1 u_2\|_{H^1} \hnorm{\eta}{m} \\
    &\le C \tnorm{\p_1\na u_2}{\hnorm{\eta}{m}},
\end{align*}
where we have used the Poincar\'e inequality $ \|\p_1 u_2\|_{H^1} \le C\| \na \p_1 u_2\|_{L^2}$.

    To bound \(\hnorm{\p_1^3\eta}{-1},\) we let \(\vp \in H^1_0\) and recall that \(\p_1\eta = \Delta \psi, \, u_2 = -\p_1\psi\). Then we can integrate by parts to have
    \begin{align*}
        \int_\Omega \p_1^3 \eta \, \vp &= \int_\Omega \p_1^2 \Delta\psi\, \vp 
        = -\int_\Omega \p_1^2 \na \psi \cdot \na \vp = \int_\Omega \p_1\na u_2\cdot \na \vp \le \tnorm{\p_1 \na u_2}\|\vp\|_{H^1_0}.
    \end{align*}
    Hence, \(\hnorm{\p_1^3\eta}{-1} \le \tnorm{\p_1\na u_2}\) and 
    \begin{align}\label{bound:a2111}
        |A_{21}| \le c_1\tnorm{\p_1\na u_2}^2 \hnorm{\eta}{m},\quad c_1=c_1(m).
    \end{align}
    By an analogous argument, we obtain
    \bq  \label{bound:a2211}\begin{aligned}
        A_{22} &\le \pnorm{\p_1u_2}{p}\pnorm{\p_2\p_1\eta}{q}\tnorm{\p_1^2\eta}
        \le C\tnorm{\p_1\na u_2}\pnorm{\p_2\Delta \psi}{q} \tnorm{\p_1^2\eta} \\ 
        &\le C \tnorm{\p_1\na u_2} \hnorm{u}{m} \tnorm{\p_1^2\eta}\le  \frac{\d}{2} \tnorm{\p_1\na u_2}^2 + C_\d \hnorm{u}{m}^2 \tnorm{\p_1^2\eta}^2.
    \end{aligned}
    \eq
    It follows from  \eqref{bound:a111}, \eqref{bound:a2111}, and \eqref{bound:a2211} that
    \bq\label{bound:Atotal}
    |A|\le   \tnorm{\p_1\na u_2}^2(\delta+ c_1\hnorm{\eta}{m})+C_\delta(\hnorm{u}{m}^2 + \pnorm{\p_2u_2}{\infty}) \tnorm{\p_1^2\eta}^2,\quad C(\delta)=C(\delta, m).
    \eq
Inserting  \(\p_1^2\eta=-\Delta u_2\) in $B$, we  can integrate by parts to have
    \begin{align*}
        B &=\int_\Omega  \rho_s'\p_1^2u_2\Delta u_2=-\int_\Omega \na(\rho_s'\p_1^2u_2)\cdot \na u_2+\int_{y\pm 1} \rho_s'\p_1^2u_2\p_2u_2 dx.
                    \end{align*}
   Since \(u_2(.,\pm1) = 0\),  The boundary term vanishes, and hence
    \begin{align*}
        B =-\int_\Omega \p_1\na(\rho_s'\p_1u_2)\cdot \na u_2 =\int_\Omega \na(\rho_s'\p_1u_2)\cdot \p_1\na u_2=\int_\Omega \rho_s'|\p_1\na u_2|^2+\int_\Omega \rho_s''\p_1u_2 \p_1\p_2 u_2.
    \end{align*}
    Using the upper bound $\rho'_s< -c_0<0$ and the Cauchy-Schwarz inequality, we get
    \begin{align*} 
    B &\le  -c_0 \tnorm{\p_1\na u_2}^2 + \pnorm{\rho''_s}{\infty} \tnorm{\p_1u_2}\tnorm{\p_2\p_1u_2} \\
    &\le -c_0 \tnorm{\p_1\na u_2}^2 + \frac{ \pnorm{\rho''_s}{\infty}^2}{\delta} \tnorm{\p_1u_2}^2 + \d \tnorm{\p_1\na u_2}^2 .
    \end{align*}
  On the other hand, we have
    \[
    \tnorm{\p_1 u_2}^2 = -\int_\Omega u_2\, \p_1^2u_2  \le \tnorm{u_2} \tnorm{\p_1\na u_2} 
    \le  \alpha  \tnorm{\p_1\na u_2}^2+\frac{1}{\alpha}  \tnorm{u}^2,\quad \alpha>0.
    \]
    Thus,
    \begin{align} \label{bound:b11}
        B &\le \Big(-c_0+\d +  \frac{\alpha\pnorm{\rho''_s}{\infty}^2}{\delta}\Big) \tnorm{\p_1 \na u_2}^2 + \frac{ \pnorm{\rho''_s}{\infty}^2}{\delta \alpha} \tnorm{u}^2.
    \end{align}
In view of  \eqref{bound:Atotal} and \eqref{bound:b11},  we arrive at \eqref{p11theta} by choosing $\delta= \frac{c_0}{32}$ and  $\alpha= \frac{\delta c_0}{16 \pnorm{\rho''_s}{\infty}^2}$.

    {\bf 2.} As for \eqref{p12theta}, we have 
    \begin{align*}
        \mez \ddt \tnorm{\p_2\p_1\eta}^2 =  - \int_\Omega \p_2\p_1 (u\cdot\na \eta )\, \p_2\p_1\eta -\int_\Omega \p_2\p_1(\rho_s'\, u_2)\, \p_2\p_1\eta=:A' + B'.
    \end{align*}
The incompressibility of \(u\) implies
    \begin{align*}
    A' &= -\int_\Omega  \Big(\p_2\p_1u_1\,\p_1\eta + \p_2u_1\, \p_1^2\eta + \p_1u_1\, \p_2\p_1\eta \Big) \, \p_2\p_1\eta -\int_\Omega  \Big(\p_2\p_1u_2\,\p_2\eta + \p_2u_2\, \p_1\p_2\eta + \p_1u_2\, \p_2^2\eta \Big) \, \p_2\p_1\eta \\
    &=:A'_1+A'_2.
    \end{align*}
Using the Sobolev embedding \(H^m\hookrightarrow W^{1,\infty} \),  incompressibility, and the relation \(\p_1 \eta = \Delta \psi \), we  obtain
    \begin{align*}
        |A'_1| &\le \Big(\tnorm{\p_2\p_1u_1}\pnorm{\p_1\eta}{\infty} + \pnorm{\p_2u_1}{\infty} \tnorm{\p_1^2\eta} + \pnorm{\p_1u_1}{\infty} \tnorm{\p_2\p_1\eta} \Big)\, \tnorm{\p_2\p_1\eta}\\
       & \le  \Big(\tnorm{\p_2^2u_2}\pnorm{\Delta \psi}{\infty} + \pnorm{\p_2u_1}{\infty} \tnorm{\Delta u_2} + \pnorm{\p_1u_1}{\infty} \tnorm{\p_2\p_1\eta} \Big)\, \tnorm{\p_2\p_1\eta}\\
       &\le C \Big(\hnorm{u_2}{2}\hnorm{u}{m}  + \pnorm{\p_2u_2}{\infty}\tnorm{\p_2\p_1\eta} \Big)\tnorm{\p_2\p_1\eta}.
    \end{align*}
It follows that
       \begin{align}\label{bound:c112}
        |A'_1|       &\le \d \hnorm{u_2}{2}^2  + C_\d (\hnorm{u}{m}^2 + \pnorm{\p_2 u_2}{\infty})\tnorm{\p_2\p_1\eta}^2,\quad C_\delta=C(\delta, m).
    \end{align}
As for \(A'_2\), we integrate by parts in its first term:
    \begin{align*}
    -\int_\Omega \p_2\p_1u_2\, \p_2\eta \, \p_2\p_1\eta  &= \int_\Omega \p_2u_2\, (\p_2\p_1\eta)^2  + \int_\Omega \p_2u_2\, \p_2\eta \, \p_2\p_1^2\eta \\ 
    &= \int_\Omega \p_2u_2\, (\p_2\p_1\eta)^2 - \int_\Omega \p_2^2u_2\, \p_2\eta \, \p_1^2\eta -\int_\Omega \p_2u_2\, \p_2^2\eta \, \p_1^2\eta \pm \int_{y=\pm1} \p_2u_2\, \p_2\eta \, \p^2_1\eta,
    \end{align*}
    where the boundary term vanishes since \(\eta(.,\pm1) = 0.\) Inserting this into the definition of $A'_2$ yields
    \begin{align*}
        A'_2 = -\int_\Omega \p_2^2u_2\, \p_2\eta \, \p_1^2\eta - \int_\Omega \p_2u_2\, \p_2^2\eta \, \p_1^2\eta -\int_\Omega \p_1u_2\, \p_2^2\eta \, \p_2\p_1\eta 
        =:A'_{21}+A'_{22} + A'_{23}.
    \end{align*}
    Since \(\p_1^2\eta = -\Delta u_2,\) we have
    \begin{align}\label{bound:c2112}
        |A'_{21}| \le \| \p_2^2u_2\|_{L^2}\pnorm{\p_2\eta}{\infty}\| \Delta u_2\|_{L^2} \le c_1 \hnorm{\eta}{m}\hnorm{u_2}{2}^2,\quad c_1=c_1(m).
    \end{align}
    With the same choice of \(p,q\) as in the estimates for $A_{21}$ above, we obtain
    \begin{align}\label{bound:c2212}
        |A'_{22}| \le \pnorm{\p_2 u_2}{p}\|\p_2^2\eta\|_{L^{q}}\|\Delta u_2\|_{L^2}\le c_1 \hnorm{\eta}{m}\hnorm{u_2}{2}^2
    \end{align}
    and 
    \begin{align}\label{bound:c2312}
        |A'_{23}| \le \pnorm{\p_1u_2}{p}\|\p_2^2\eta\|_{L^q}\tnorm{\p_2\p_1\eta}
        \le c_1 \tnorm{\p_1\na u_2}\hnorm{\eta}{m}\tnorm{\p_2\p_1\eta}.
    \end{align}
      Combining \eqref{bound:c112}, \eqref{bound:c2112}, \eqref{bound:c2212}, and \eqref{bound:c2312}, we find 
      \bq\label{bound:c1000}
      |A'|\le (\d+2c_1\|\eta\|_{H^m})\hnorm{u_2}{2}^2 +c_1 \tnorm{\p_1\na u_2}\hnorm{\eta}{m}\tnorm{\p_2\p_1\eta} + C_\d (\hnorm{u}{m}^2 + \pnorm{\p_2 u_2}{\infty})\tnorm{\p_2\p_1\eta}^2.
      \eq
    To bound $B'$, we first integrate by parts and use that $\p_1^2\eta=-\Delta u_2$:
    \begin{multline*}
   B'=-\int_\Omega \p_2\p_1(\rho_s'u_2)\p_2\p_1\eta=\int_\Omega \p_2(\rho_s'u_2)\p_2\p^2_1\eta=-\int_\Omega \p_2(\rho_s'u_2)\Delta \p_2u_2\\
   =\int_\Omega \p_2\na (\rho_s'u_2)\cdot \na \p_2u_2\mp \int_{y=\pm 1}\p_2(\rho_s'u_2)\p^2_2u_2dx,
    \end{multline*}
    where the boundary term vanishes since $u_2\in H^m_e$ by Proposition \ref{prop:hme}. It follows that 
    \begin{align*}
       B'&=\int_\Omega \rho_s'|\na \p_2u_2|^2+\int_\Omega \rho_s''\na u_2\cdot \na \p_2 u_2+\int_\Omega \rho_s''\p_2u_2\p_2^2u_2+\int_\Omega \rho_s'''u_2\p^2_2u_2\\
&\le -c_0\| \na \p_2u_2\|_{L^2}^2+ \| \rho_s''\|_{L^\infty}\| \na u_2\|_{L^2}\| \na \p_2u_2\|_{L^2}+\| \rho_s'''\|_{L^\infty}\|u_2\|_{L^2}\| \na \p_2u_2\|_{L^2}\\\
&= -c_0(\hnorm{u_2}{2}^2 - \tnorm{\p_1\na u_2}^2 - \tnorm{u_2}^2)+ \| \rho_s''\|_{L^\infty}\| \na u_2\|_{L^2}\| \na \p_2u_2\|_{L^2}+\| \rho_s'''\|_{L^\infty}\|u_2\|_{L^2}\| \na \p_2u_2\|_{L^2}.
       \end{align*}
By interpolating $H^1$ between $L^2$ and $H^2$, we have
       \[
        \| \rho_s''\|_{L^\infty}\| \na u_2\|_{L^2}\| \na \p_2u_2\|_{L^2}\le \delta \|u_2\|_{H^2}^2+C_\delta \| u\|_{L^2}^2,\quad C_\delta=C(\delta,  \| \rho_s''\|_{L^\infty}, m).
        \]
       Consequently, we obtain 
       \bq\label{bound:c1001}
        B'\le  (-c_0+2\delta)\hnorm{u_2}{2}^2+c_0 \tnorm{\p_1\na u_2}^2 +C_\delta \| u\|_{L^2}^2,\quad C_\delta=C(\delta,  \| \rho_s''\|_{W^{1, \infty}}, c_0, m).
        \eq 
      In view of \eqref{bound:c1000} and \eqref{bound:c1001}, we can choose $\delta=\frac{c_0}{24}$ to conclude  \eqref{p12theta}.
    \end{proof}
\subsection{Nonlinear stability}
\subsubsection{Time-average decay} 
We record the following facts about time-average decay from differential inequalities. These results are taken from \cite[ Section 2.2]{BJPW}.
\begin{lemm}\label{lemm:dineq1}
    Suppose that \(f(t)\) and $a(t)$ are nonnegative functions on \([0,T]\) such that 
    \[
    \ddt f(t) \le -a(t)^{-\a} f(t)^\b
    \]
    for some constants \(\a > 0, \, \b>1 \). Then we have 
    \[
    f(t) \le (\beta-1)^{-\frac{1}{\beta-1}} \frac{A^{\a/(\b-1)}}{t^{(\a+1)/(\b-1)}}\quad\forall t\in (0, T],~A:=\displaystyle{\int_0^ta(s)\,ds}.
    \]
\end{lemm}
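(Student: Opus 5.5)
The idea is to compare $f$ with the solution of an autonomous ODE after a time reparametrization by $A$. Since the inequality is meaningless where $f=0$, first observe that $\frac{d}{dt}f\le 0$, so $f$ is nonincreasing. If $f(t_0)=0$ for some $t_0\le t$, then $f(t)=0$ and there is nothing to prove. So assume $f>0$ on $[0,t]$.

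Divide the inequality by $f^\beta$ and rewrite the left side as a derivative:
\[
\frac{d}{ds}\Big(\frac{f(s)^{1-\beta}}{\beta-1}\Big)= -f(s)^{-\beta}\frac{d}{ds}f(s)\ge a(s)^{-\alpha}.
\]
Integrating from $0$ to $t$ and dropping the nonnegative term $f(0)^{1-\beta}/(\beta-1)$ gives
\[
f(t)^{1-\beta}\ge (\beta-1)\int_0^t a(s)^{-\alpha}\,ds .
\]
Everything therefore reduces to a lower bound for $\int_0^t a^{-\alpha}$ in terms of $A=\int_0^t a$.

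That lower bound is the key step, and it comes from Jensen's inequality (equivalently H\"older). The function $x\mapsto x^{-\alpha}$ is convex on $(0,\infty)$, so applying Jensen with respect to the normalized measure $ds/t$ on $[0,t]$ yields
\[
\frac1t\int_0^t a^{-\alpha}\ge \Big(\frac1t\int_0^t a\Big)^{-\alpha}=\frac{t^{\alpha}}{A^{\alpha}},
\qquad\text{so}\qquad \int_0^t a^{-\alpha}\ge t^{\alpha+1}A^{-\alpha}.
\]
If Jensen is to be avoided, H\"older gives the same bound: $t=\int_0^t a^{\frac{\alpha}{\alpha+1}}a^{-\frac{\alpha}{\alpha+1}}\le A^{\frac{\alpha}{\alpha+1}}\big(\int_0^t a^{-\alpha}\big)^{\frac1{\alpha+1}}$. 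Points where $a=0$ make $a^{-\alpha}=\infty$. This only strengthens the lower bound, and the conclusion stays valid under the convention $a^{-\alpha}=+\infty$ there. I also expect the degenerate case $A=0$ to force $f$ to vanish, so the estimate holds trivially in that case.

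Combining the two steps, we get $f(t)^{1-\beta}\ge(\beta-1)\,t^{\alpha+1}A^{-\alpha}$. Raising both sides to the power $-1/(\beta-1)$, which reverses the inequality, gives
\[
f(t)\le(\beta-1)^{-\frac1{\beta-1}}\,A^{\alpha/(\beta-1)}\,t^{-(\alpha+1)/(\beta-1)},
\]
which is the claim. The only step requiring care is the Jensen/H\"older bound together with the handling of the zero sets of $f$ and $a$; the rest is elementary ODE comparison.
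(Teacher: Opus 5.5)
Your proof is correct. The paper gives no argument for this lemma; it imports it from \cite{BJPW}, Section 2.2. Your separation of variables, $\frac{d}{ds}\big(f^{1-\beta}/(\beta-1)\big)\ge a^{-\alpha}$, combined with the H\"older/Jensen bound $\int_0^t a^{-\alpha}\ge t^{\alpha+1}A^{-\alpha}$, is the standard proof.

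The hedge about $A=0$ can be removed. If $f>0$ on $[0,t]$, your integrated inequality gives $\int_0^t a^{-\alpha}\le f(t)^{1-\beta}/(\beta-1)<\infty$, which forces $a>0$ a.e.\ on $[0,t]$ and hence $A>0$. If instead $f(t)=0$, the claim is trivial.
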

\begin{lemm}\label{lemm:dineq2}
    Suppose that  $f(t)$ and $g(t)$ are nonnegative functions on \([0,T]\) such that 
    \[
    \ddt f(t) \le -g(t) \text{ and } f(t) \le \frac{A}{t^n}
    \]
    for some constants  $n>0$ and $A>0$. Then we have 
    \[
    \frac{2}{t}\int_{t/2}^t g(s)\, ds \le \frac{2^{n+1}A}{t^{n+1}}\quad\forall t\in (0, T]. 
    \]
\end{lemm}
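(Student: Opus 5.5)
The statement to prove is Lemma \ref{lemm:dineq2}: if $f'\le -g$ and $f(t)\le A t^{-n}$, then $\frac{2}{t}\int_{t/2}^t g\le 2^{n+1}A\,t^{-(n+1)}$. The plan is to integrate the differential inequality over the window $[t/2,t]$ and then use the decay hypothesis at the left endpoint. We do not need the decay hypothesis at the right endpoint, since $f$ is nonnegative.

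Fix $t\in(0,T]$ and integrate $f'(s)\le -g(s)$ over $s\in[t/2,t]$. This gives
\[
\int_{t/2}^t g(s)\,ds \le f(t/2)-f(t)\le f(t/2),
\]
where the last inequality uses $f(t)\ge 0$.

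Next apply the hypothesis $f(s)\le A s^{-n}$ at $s=t/2$, which lies in $(0,T]$. This gives $f(t/2)\le A\,2^n t^{-n}$. Multiplying by $2/t$ then yields
\[
\frac{2}{t}\int_{t/2}^t g(s)\,ds\le \frac{2^{n+1}A}{t^{n+1}},
\]
which is the claimed bound.

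The only point needing care is regularity. Integrating $f'\le -g$ requires $f$ to be absolutely continuous, or at least that the inequality hold in the integrated sense, and $g$ to be integrable. In the applications, $f$ is an energy such as $\mathcal{E}(\rho)$ and $g=\tnorm{u}^2$, and these regularity properties hold. So the argument has no genuine obstacle; the main content is the choice of the window $[t/2,t]$, which lets the pointwise decay of $f$ at time $t/2$ control the time average of $g$ over $[t/2,t]$.
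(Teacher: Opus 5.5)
Your argument is correct: integrating $f'\le -g$ over $[t/2,t]$, dropping $-f(t)\le 0$, and using $f(t/2)\le 2^nA\,t^{-n}$ gives exactly the claimed bound after multiplying by $2/t$. The paper gives no proof of its own and quotes the lemma from \cite[Section 2.2]{BJPW}; your window-integration argument is the standard proof of this fact, and your remark that $f$ must be absolutely continuous is the only regularity needed.
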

\begin{lemm}\label{lemm:dineq3}
   Suppose that  \(f(t)\) is a nonnegative function on \([0,T]\) such that 
    \[
    \frac{2}{t} \int_{t/2}^t f(s)\, ds \le \frac{A}{t^n}
    \]
    for some constant $n>1$ and \(A > 0\). Then we have
    \[
    \int_t^T f(s)\, ds \le \frac{A}{2(1-2^{1-n})t^{n-1}}\quad\forall  t\in (0, T].
    \]
    
\end{lemm}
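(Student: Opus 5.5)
Lemma \ref{lemm:dineq3} is a dyadic summation of the time-average hypothesis. The plan is to cover $[t,T]$ by dyadic intervals $[2^{k}t,2^{k+1}t]$, $k\ge 0$, bound the integral of $f$ over each piece by the hypothesis at the right endpoint, and sum the resulting geometric series, which converges because $n>1$.

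\emph{Step 1.} Fix $t\in(0,T]$ and set $s_k=2^{k}t$. Let $K\ge 0$ be the largest integer with $s_{K}\le T$. Then
\[
[t,T]\subset \bigcup_{k=1}^{K}[s_{k-1},s_k]\cup[s_K,T].
\]
On a full piece $[s_{k-1},s_k]=[s_k/2,s_k]$ with $s_k\le T$, the hypothesis applied at time $s_k$ gives
\[
\int_{s_k/2}^{s_k}f\,ds\le \frac{s_k}{2}\cdot\frac{A}{s_k^{n}}=\frac{A}{2}\,s_k^{1-n}.
\]

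\emph{Step 2.} The last, incomplete piece $[s_K,T]$ must be handled. Since $f\ge 0$ and $T/2\le s_K$, we have $[s_K,T]\subset[T/2,T]$, so the hypothesis at time $T$ gives
\[
\int_{s_K}^{T}f\,ds\le\int_{T/2}^{T}f\,ds\le \frac{A}{2}\,T^{1-n}\le \frac{A}{2}\,s_{K+1}^{1-n},
\]
where the final inequality uses $T< s_{K+1}$ (by maximality of $K$) and $1-n<0$. This is the only delicate point, namely controlling the incomplete last interval. The bound shows that this interval contributes no more than the next dyadic term would.

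\emph{Step 3.} Summing Steps 1 and 2, and using that $s_0=t$ is not a right endpoint, gives
\[
\int_t^Tf\,ds\le \frac{A}{2}\sum_{k=1}^{K+1}(2^k t)^{1-n}\le \frac{A}{2}\,t^{1-n}\sum_{k=1}^{\infty}2^{k(1-n)}=\frac{A}{2\,t^{n-1}}\cdot\frac{2^{1-n}}{1-2^{1-n}}.
\]
This is at most $\frac{A}{2(1-2^{1-n})t^{n-1}}$, since $2^{1-n}<1$. This is the claimed bound, which the argument in fact slightly improves.
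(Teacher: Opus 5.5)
Your dyadic covering is the right idea, but Step 2, which you yourself flag as the only delicate point, has the inequality backwards. For $n>1$ the map $x\mapsto x^{1-n}$ is decreasing, so $T<s_{K+1}$ gives $T^{1-n}>s_{K+1}^{1-n}$. The bound $\int_{s_K}^T f\,ds\le \frac{A}{2}s_{K+1}^{1-n}$ therefore does not follow. Moreover, the ``improved'' constant $\frac{2^{1-n}}{1-2^{1-n}}$ is false once $n>2$.

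To see this, fix $0<\delta<T/2$ and let $f=c\,\mathbf{1}_{[T-\delta,T]}$ with $c\delta=\frac{A}{2}T^{1-n}$. For every $s\in(0,T]$ one has $\int_{s/2}^s f\,ds\le c\delta=\frac A2T^{1-n}\le\frac A2 s^{1-n}$, so the hypothesis holds. But for $t=T-\delta$ you get $\int_t^T f\,ds=\frac A2T^{1-n}=\frac A2t^{1-n}(t/T)^{n-1}$. As $\delta\to0$, $(t/T)^{n-1}\to1>\frac{2^{1-n}}{1-2^{1-n}}$ whenever $2^{1-n}<\frac12$.

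The repair is short and uses exactly the slack in the stated constant $\frac{1}{1-2^{1-n}}=\sum_{k\ge0}2^{k(1-n)}$, whose $k=0$ term your covering never spends. Set $r=2^{1-n}$ and bound the incomplete piece using $s_K\le T$ instead:
\[
\int_{s_K}^Tf\,ds\le\int_{T/2}^Tf\,ds\le\frac A2T^{1-n}\le\frac A2s_K^{1-n}=\frac A2t^{1-n}r^K .
\]
Since $r^K\le 1=r^0$, this gives
\[
\int_t^Tf\,ds\le\frac A2t^{1-n}\Bigl(\sum_{k=1}^{K}r^k+r^K\Bigr)\le\frac A2t^{1-n}\sum_{k=0}^{K}r^k\le\frac{A}{2(1-2^{1-n})t^{n-1}},
\]
which is precisely the lemma and no better.

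Equivalently, anchor the dyadic intervals at $T$ rather than at $t$. Cover $[t,T]$ by $[2^{-k-1}T,2^{-k}T]$, $0\le k\le K'$, where $K'$ is the least integer with $2^{-K'-1}T\le t$. Then every piece is a full window of the hypothesis, and the smallest right endpoint satisfies $2^{-K'}T\ge t$. The resulting geometric sum is bounded by $\frac A2t^{1-n}\sum_{j\ge0}r^j$, with no leftover interval to handle.
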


\begin{lemm}\label{lemm:dineq4}
    Suppose that  $H$, $h$, $f$, and $g$ are nonnegative functions on $[0, T]$ such that 
    \[
    \ddt H(t) \le -f(t) + h(t)H(t) + g(t).
    \]
    Assume that there exist $n$, $A>0$ such that 
    \[
    \int_0^Th(s)\,ds \le 1,\quad  \frac{2}{t} \int_{t/2}^t H(s)\,ds \le \frac{A}{t^n}, \quad  \frac{2}{t}\int_{t/2}^t g(s)\,ds \le \frac{A}{t^{n+1}}\quad\forall t\in (0, T]. 
    \]
    Then we have 
         \[
    \frac{2}{t} \int_{t/2}^t f(s)\,ds \le\frac{ 8e(2^{n-1}+1)A}{t^{n+1}}\quad\forall t\in (0, T].
    \]
\end{lemm}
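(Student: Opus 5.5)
The natural route is a Gronwall integrating factor combined with a dyadic-window averaging argument. Set $\Lambda(t):=\int_0^t h(s)\,ds\le 1$ and $\tilde H(t):=e^{-\Lambda(t)}H(t)$. Then
\[
\ddt \tilde H\le e^{-\Lambda}\big(-f+g\big)\le -e^{-1}f+g,
\]
since $e^{-\Lambda}\in[e^{-1},1]$ and $f,g\ge0$. Integrating over $[s,t]$ with $s\le t$ gives
\[
e^{-1}\int_s^t f\,d\tau\le \tilde H(s)+\int_s^t g\,d\tau\le H(s)+\int_s^t g\,d\tau .
\]
This single inequality will drive the rest.

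To estimate $\int_{t/2}^t f$, I will choose the starting time $s$ by averaging rather than picking it pointwise. Apply the inequality with $s\in[t/4,t/2]$, the interval $[t/2,t]$ being contained in $[s,t]$, and then average in $s$ over $[t/4,t/2]$:
\[
e^{-1}\int_{t/2}^t f\le \frac{4}{t}\int_{t/4}^{t/2}H(s)\,ds+\int_{t/4}^t g .
\]
The hypothesis at time $t/2$ gives $\frac{4}{t}\int_{t/4}^{t/2}H\le \frac{A}{(t/2)^n}=\frac{2^nA}{t^n}$. The $g$-integral splits as $\int_{t/4}^{t/2}g+\int_{t/2}^{t}g$. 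By the $g$-hypothesis at times $t/2$ and $t$, these pieces are bounded by $\frac t4\cdot\frac{A}{(t/2)^{n+1}}=\frac{2^{n-1}A}{t^n}$ and $\frac t2\cdot\frac{A}{t^{n+1}}=\frac{A}{2t^n}$, respectively.

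Multiplying by $\frac{2e}{t}$ then yields
\[
\frac2t\int_{t/2}^t f\le \frac{2e}{t^{n+1}}\Big(2^n+2^{n-1}+\tfrac12\Big)A\le \frac{8e(2^{n-1}+1)A}{t^{n+1}},
\]
because $2^n+2^{n-1}+\frac12=3\cdot2^{n-1}+\frac12\le 4(2^{n-1}+1)$.

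The only delicate point is the choice of starting time. Using $s=t/2$ pointwise would require a pointwise bound on $H(t/2)$, which we do not have; only time averages of $H$ are controlled. Averaging $s$ over $[t/4,t/2]$ resolves this, at the cost of a factor $2^n$ and an extra $g$-window.

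Two technical checks remain. First, nonnegativity of $f$ and $g$ is what justifies replacing $e^{-\Lambda}$ by $e^{-1}$ in front of $f$ and by $1$ in front of $g$. Second, all the times used, namely $t/4$, $t/2$ and $t$, lie in $(0,T]$ whenever $t\in(0,T]$, so each application of the hypotheses is legitimate.
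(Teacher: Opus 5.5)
The paper does not prove this lemma itself; it quotes it from \cite[Section 2.2]{BJPW}, so there is no in-paper argument to compare against. Your proof is correct and self-contained, and each step checks out.
\begin{itemize}
\item The integrating factor $e^{-\Lambda}$ with $0\le\Lambda\le 1$ gives $\frac{d}{dt}\tilde H\le -e^{-1}f+g$. This uses $f,g\ge 0$ exactly as you say.
\item Averaging the starting time over $[t/4,t/2]$ is the right device. It uses only the averaged hypotheses on $H$ and $g$ at times $t/2$ and $t$, never a pointwise bound on $H$.
\item The arithmetic gives $\frac{2}{t}\int_{t/2}^t f\le 2e\bigl(3\cdot 2^{n-1}+\tfrac12\bigr)A\,t^{-n-1}$. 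This is slightly sharper than the stated constant $8e(2^{n-1}+1)$.
\end{itemize}

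One assumption is left implicit, as it is in the statement: $H$ must be absolutely continuous and $h,f,g$ integrable, so that the differential inequality can be integrated over $[s,t]$. This holds in the paper's applications, where $H$ is a squared Sobolev norm of a sufficiently regular solution.
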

\subsubsection{Bootstrap} In this subsection, we fix $T>0$ and assume  the following bootstrap assumption:  
\begin{align} 
\max_{t\in [0, T]}\hnorm{\rho(t) - \rho_s}{m}^2  + \int _0 ^T \hnorm{u}{m}^2\le   M\eps^2<\eps_0^2,\label{assum:boot}
\end{align}
where $\eps$ and $M$ will be chosen later, and $\eps_0=\eps_0(c_0,  m)$ is the small constant in \eqref{bound:hkperturb}. For $\eps_0$ sufficiently small, this implies 
\bq\label{monotone:rho}
\| \rho(t)-\rho_s\|_{W^{1, \infty}}< \frac{c_0}{2},\quad \sup_{x\in \Omega}\p_2 \rho(x, t)< -\frac{c_0}{2}  \quad\forall t\le T. 
\eq
\begin{prop}\label{prop:potentialalgdecay}
Given \eqref{assum:boot}, there exists $C_\sharp=C_\sharp(\| \rho'_s\|_{L^\infty}, c_0, m)>0$ such that
      \bq\label{tad:pE}
        \mathcal{E}\bigl(\rho(t)\bigr) \le \frac{C_\sharp M\eps^2}{t^m}\quad   \text{ and } \quad
        \frac{2}{t}\int_{t/2}^t \tnorm{u(s)}^2\, ds \le \frac{C_\sharp M\eps^2}{t^{m+1}}\quad\forall t\in (0, T].
    \eq
\end{prop}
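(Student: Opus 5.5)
We follow the potential-energy route of \cite{Park, BJPW}. By \eqref{assum:boot} and \eqref{monotone:rho}, $\rho(t)$ is uniformly decreasing in $y$ with $\p_2\rho<-c_0/2$ and $\|\p_2\rho\|_{L^\infty}\le \|\rho_s'\|_{L^\infty}+c_0/2$. Since $\eta\in H^m_e$, we have $\rho(\cdot,\pm1,t)=\rho_s(\pm1)=c_\mp$. Hence Corollary \ref{prop:potentialdecay} applies: $\mathcal{E}(\rho(t))$ is well-defined and $\frac{d}{dt}\mathcal{E}(\rho)=-\|u\|_{L^2}^2$. The plan is to close a differential inequality of the form $\frac{d}{dt}\mathcal{E}\le -a(t)^{-\alpha}\mathcal{E}^\beta$ and apply Lemma \ref{lemm:dineq1}. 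The second estimate in \eqref{tad:pE} then follows directly from Lemma \ref{lemm:dineq2} with $f=\mathcal{E}$, $g=\|u\|_{L^2}^2$ and $n=m$.

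The key step is to bound $\mathcal{E}$ by a low norm of $u$ interpolated against a high norm. By \eqref{potentialup} (with $c_0$ replaced by $c_0/2$), $\mathcal{E}(\rho)\le C\|\p_1\rho\|_{L^2}^2=C\|\p_1\eta\|_{L^2}^2$, with $C=C(\|\rho_s'\|_{L^\infty},c_0)$. We have $\p_1\eta=\Delta\psi$, where $\psi$ vanishes on $\p\Omega$ and $\na^\perp\psi=u$, so $\|\p_1\eta\|_{L^2}\le C\|\psi\|_{H^2}$. By \eqref{Poincare:psi}, $\|\psi\|_{L^2}\le C\|u\|_{L^2}$ and $\|\psi\|_{H^{m+1}}\le C\|u\|_{H^m}$. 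Interpolating $H^2$ between $L^2$ and $H^{m+1}$ gives
\[
\|\psi\|_{H^2}\le C\|u\|_{L^2}^{\frac{m-1}{m+1}}\|u\|_{H^m}^{\frac{2}{m+1}}.
\]
(If the reference point $\psi\in L^2$ is inconvenient, one could instead use $\|\p_1\eta\|_{L^2}\le \|\na u\|_{L^2}$ directly; interpolating $H^1$ between $L^2$ and $H^m$ gives the exponents $\frac{m-1}{m}$ and $\frac1m$. The final exponent then changes.) Using the first version,
\[
\mathcal{E}\le C\|u\|_{L^2}^{\frac{2(m-1)}{m+1}}\|u\|_{H^m}^{\frac{4}{m+1}},
\quad\text{so}\quad
\|u\|_{L^2}^2\ge C^{-\frac{m+1}{m-1}}\,\mathcal{E}^{\frac{m+1}{m-1}}\,\big(\|u\|_{H^m}^2\big)^{-\frac{2}{m-1}}.
\]
This gives $\frac{d}{dt}\mathcal{E}\le -a(t)^{-\alpha}\mathcal{E}^\beta$ with $\beta=\frac{m+1}{m-1}$, $\alpha=\frac{2}{m-1}$, and $a(t)=C'\|u(t)\|_{H^m}^2$ (absorbing the constant into $a$). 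Then $\beta-1=\frac{2}{m-1}$, so $\frac{\alpha+1}{\beta-1}=\frac{m+1}{2}$ and $\frac{\alpha}{\beta-1}=1$. Lemma \ref{lemm:dineq1} yields $\mathcal{E}(t)\le C\,A/t^{(m+1)/2}$ with $A=\int_0^t\|u\|_{H^m}^2\le M\eps^2$ by \eqref{assum:boot}.

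This is not the desired rate $t^{-m}$, which is why the interpolation must be sharpened; this is the expected main obstacle. The fix is to interpolate with the weaker norm as low as possible: bound $\mathcal{E}$ by $\|\p_1 h\|^2$ and hence by an $H^{1}$-type norm of $\psi$ relative to $\|u\|_{L^2}$. This gives, via \eqref{potentialup} in the form $\mathcal{E}\le C\|\na^\perp(-\Delta)^{-1}\p_1\rho\|_{H^1}^2$, the bound $\mathcal{E}\le C\|u\|_{H^1}^2$. Interpolating $H^1$ between $L^2$ and $H^m$,
\[
\mathcal{E}\le C\|u\|_{L^2}^{\frac{2(m-1)}{m}}\|u\|_{H^m}^{\frac2m},
\]
so $\beta=\frac{m}{m-1}$ and $\alpha=\frac{1}{m-1}$. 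Now $\beta-1=\frac1{m-1}$, $\frac{\alpha}{\beta-1}=1$, and $\frac{\alpha+1}{\beta-1}=m$. Lemma \ref{lemm:dineq1} then gives exactly $\mathcal{E}(\rho(t))\le C_\sharp M\eps^2 t^{-m}$. The constant depends only on $(\|\rho_s'\|_{L^\infty},c_0,m)$, through \eqref{potentialup} and the interpolation constants.

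Finally, apply Lemma \ref{lemm:dineq2} with $f=\mathcal{E}$, $g=\|u\|_{L^2}^2$, $n=m$ and $A=C_\sharp M\eps^2$. This gives $\frac{2}{t}\int_{t/2}^t\|u\|_{L^2}^2\le 2^{m+1}C_\sharp M\eps^2 t^{-m-1}$. Enlarging $C_\sharp$ yields \eqref{tad:pE}.
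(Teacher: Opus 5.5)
Your final argument is the paper's: bound $\mathcal{E}(\rho)\le C\|u\|_{H^1}^2$ via \eqref{potentialup} and \eqref{monotone:rho}, interpolate $H^1$ between $L^2$ and $H^m$, apply Lemma \ref{lemm:dineq1} with $\alpha=\frac{1}{m-1}$, $\beta=\frac{m}{m-1}$ and $A\le M\eps^2$, then apply Lemma \ref{lemm:dineq2} with $n=m$. The preliminary $t^{-(m+1)/2}$ attempt is unnecessary, and its loss comes from using $\|\psi\|_{L^2}$ as the low endpoint rather than from the choice of target norm, since this wastes the derivative that $\|u\|_{L^2}\sim\|\na\psi\|_{L^2}$ already provides.
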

\begin{proof}
  Using  \eqref{potentialup}, \eqref{monotone:rho},  and interpolation, we obtain 
    \[
    \mathcal{E}\bigl(\rho(t)\bigr) \le C\| u\|_{H^1}^2\le C(\tnorm{u}^{\frac{m-1}{m}}\hnorm{u}{m}^{\frac{1}{m}})^2,\quad C=C(\| \rho'_s\|_{L^\infty}, c_0, m).
    \]
 Since \eqref{assum:boot} implies that $\max_{t\in [0, T]} \| \rho(t)-\rho_s\|_{H^m}\le \eps_0$, we can apply Corollary \ref{prop:potentialdecay} to have
    \[
    \ddt \mathcal{E}\bigl(\rho(t)\bigr) = -\tnorm{u}^2 \le -C\mathcal{E}\bigl(\rho(t)\bigr)^{\frac{m}{m-1}}\hnorm{u(t)}{m}^{\frac{-2}{m-1}}.
    \]
    Then, Lemma \ref{lemm:dineq1} with $\alpha=\frac{1}{m-1}$ and $\beta=\frac{m}{m-1}$ gives us the first inequality in \eqref{tad:pE}. 
  The second inequality in \eqref{tad:pE} follows from the first one and  Lemma \ref{lemm:dineq2}.
\end{proof}
\begin{lemm}\label{lemm:nau2integ}
 Let   $T_1\in (0, T]$ and assume that
    \begin{equation}\label{bound:h2decay:00}
    \frac{2}{t} \int_{t/2}^t \hnorm{u_2}{2}^2 \le \frac{C_0M\eps^2}{t^m}\quad\forall t\le T_1.
    \end{equation}
 Then under \eqref{assum:boot}, there exists a constant  $K=K(C_0, m)>0$ independent of $T_1$ such that     
\bq\label{est:intu2}
    \int_{0}^{T_1}\pnorm{\na u_2}{\infty} \le K\sqrt{M}\eps.
    \eq
\end{lemm}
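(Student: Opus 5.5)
We bound $\|\na u_2\|_{L^\infty}$ by an interpolation of Gagliardo--Nirenberg type between $\|u_2\|_{H^2}$ and $\|u\|_{H^m}$, and then split the time integral dyadically. On the two-dimensional bounded domain $\Omega$ we have $\na u_2\in H^{m-1}$ with $m-1>1$. An interpolation/Sobolev inequality therefore gives, for some $\theta=\theta(m)\in(0,1)$ with $\theta<1$ strictly,
\[
\|\na u_2\|_{L^\infty}\le C_m\|u_2\|_{H^2}^{1-\theta}\|u_2\|_{H^m}^{\theta}.
\]
For instance, $\|g\|_{L^\infty}\le C\|g\|_{H^1}^{1-\theta}\|g\|_{H^{s}}^{\theta}$ with $g=\na u_2$, $s=m-1>1$, and $\theta=\frac{\epsilon}{s-1}$ for $H^{1+\epsilon}\hookrightarrow L^\infty$, where $\epsilon>0$ can be taken small. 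This freedom to make $\theta$ small is what we exploit below.

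Next we split $(0,T_1]$ dyadically into $I_k=(T_1 2^{-k-1},T_12^{-k}]$ for $k\ge0$, and separately handle a final piece near $t=0$ if needed. On $I_k$, with $t_k=T_12^{-k}$, Hölder's inequality in time with exponents $\frac{2}{1-\theta},\frac{2}{\theta},2$ (whose reciprocals sum to one) gives
\[
\int_{I_k}\|\na u_2\|_{L^\infty}\le C\Big(\int_{I_k}\|u_2\|_{H^2}^2\Big)^{\frac{1-\theta}{2}}\Big(\int_{I_k}\|u\|_{H^m}^2\Big)^{\frac{\theta}{2}}|I_k|^{\frac12}.
\]
By \eqref{bound:h2decay:00} the first factor is at most $(C_0M\eps^2t_k^{1-m})^{\frac{1-\theta}{2}}$, by \eqref{assum:boot} the second is at most $(M\eps^2)^{\theta/2}$, and the third is $\lesssim t_k^{1/2}$. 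Hence the contribution of $I_k$ is bounded by $C\sqrt{M}\eps\, t_k^{\frac12-\frac{(m-1)(1-\theta)}{2}}$.

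This bound is good for large $t_k$ (say $t_k\ge1$) provided $(m-1)(1-\theta)>1$, which holds if $\theta$ is small since $m>2$. For small $t_k$ the exponent is negative and the geometric sum diverges, so the short times require a different estimate. Specifically, on $(0,\min(1,T_1)]$ we instead use $\|\na u_2\|_{L^\infty}\le C\|u\|_{H^m}$ (because $m>2$) together with Cauchy--Schwarz: $\int_0^1\|u\|_{H^m}\le(\int_0^1\|u\|_{H^m}^2)^{1/2}\le\sqrt M\eps$ by \eqref{assum:boot}. For $t\ge1$ we use the dyadic blocks $[2^{j},2^{j+1}]$, $j\ge0$, intersected with $[0,T_1]$, which gives a convergent geometric series $\sum_j 2^{-j\gamma}$ with $\gamma=\frac{(m-1)(1-\theta)-1}{2}>0$.

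Summing the two contributions yields \eqref{est:intu2} with $K=K(C_0,m)$, independent of $T_1$. The main obstacle is justifying the interpolation inequality with a small $\theta$ on the channel, uniformly for integer $m>2$. The case $m=3$ is the tightest: there $\na u_2\in H^2$, and we need $\theta<\frac12$ in the estimate $\|\na u_2\|_{L^\infty}\lesssim\|u_2\|_{H^2}^{1-\theta}\|u_2\|_{H^3}^{\theta}$. This follows from Brezis--Gallouet-type or fractional interpolation $H^{2+\epsilon}=[H^2,H^3]_\epsilon$ with the embedding $H^{2+\epsilon}\hookrightarrow W^{1,\infty}$, valid on the Lipschitz domain $\Omega$ via an extension operator.
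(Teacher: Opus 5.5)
Your proposal is correct and follows the paper's proof. You use the same interpolation $\|\na u_2\|_{L^\infty}\lesssim \|u_2\|_{H^{2+\alpha}}\lesssim \|u_2\|_{H^2}^{1-\theta}\|u_2\|_{H^m}^{\theta}$ with $\theta=\alpha/(m-2)$ small, then H\"older in time on dyadic windows (your exponent $\gamma=\frac{m-2-(m-1)\theta}{2}$ is exactly the paper's $\delta$), and the Cauchy--Schwarz bound $\int_0^t\|\na u_2\|_{L^\infty}\lesssim t^{1/2}\sqrt{M}\eps$ for short times. The only cosmetic difference is that you sum the dyadic blocks by hand, whereas the paper first derives a time-averaged bound on $[t/2,t]$ and then invokes Lemma \ref{lemm:dineq3}, which performs that same summation.
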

\begin{proof}
Let   \( \a \in (0,  \frac{m-2}{2})\). By interpolating  \(H^{2+\a}\) between \(H^2\) and \(H^m\), we obtain
    \[
    \pnorm{\na u_2}{\infty}\le K_1\| u_2\|_{H^{2+\a}} \le K_2 \hnorm{u_2}{2}^{\frac{m-2-\a}{m-2}}\, \hnorm{u_2}{m}^{\frac{\a}{m-2}},\quad K_2=K_2(m, \a).
    \]
    Using H\"{o}lder's inequality twice, we have
    \begin{align*}
    \frac{2}{t} \int_{t/2}^t \pnorm{\na u_2}{\infty}\, ds &\le \bigl( \frac{2}{t}\int_{t/2}^t \pnorm{\na u_2}{\infty}^2\, ds \bigr)^{\mez }  \le   K_2\bigl(\frac{2}{t} \int_{t/2}^t \hnorm{u_2}{2}^{\frac{2(m-2-\a)}{m-2}}\, \hnorm{u}{m}^{\frac{2\a}{m-2}}\bigr)^\mez \\
    &\le K_2 \bigl(\frac{2}{t} \int_{t/2}^t \hnorm{u_2}{2}^2\bigr)^{\frac{m-2-\a}{2(m-2)}}\, \bigl(\frac{2}{t}\int_{t/2}^t \hnorm{u}{m}^2\bigr)^{\frac{\a}{2(m-2)}},\quad t\le T_1.
    \end{align*}
    Then, combining the $H^2$ bound  \eqref{bound:h2decay:00} for $u_2$ with the $H^m$ bound \eqref{assum:boot} for $u$, we deduce 
    \begin{align*}
        \frac{2}{t} \int_{t/2}^t \pnorm{\na u_2}{\infty}\, ds & \le K_2\bigl(\frac{C_0M\eps^2}{t^m}\bigr)^{\frac{m-2-\a}{2(m-2)}}\, \bigl(\frac{2M\eps^2}{t}\bigr)^{\frac{\a}{2(m-2)}}
        \le K_3\sqrt{M}\eps \frac{1}{t^{m\frac{m-2-\a}{2(m-2)}+\frac{\a}{2(m-2)}}},\quad t\le T_1,
    \end{align*}
    where $ K_3=K_3(C_0, \a, m)$. Since $m>2$, we can choose $\a>0$ small enough so that $\delta:=m\frac{m-2-\a}{2(m-2)}+\frac{\a}{2(m-2)}-1>0$. Then,  Lemma \ref{lemm:dineq3} implies
    \[
    \int_t^{T_1} \pnorm{\na u_2}{\infty} \le K_4\frac{\sqrt{M}\eps}{t^\delta},\quad K_4=K_4(C_0, m).
    \]
   On the other hand,  \eqref{assum:boot} implies 
    \[
    \int_0^{t} \pnorm{\na u_2}{\infty} \le t^\mez \bigl(\int_0^t  \pnorm{\na u_2}{\infty}^2\bigr)^\frac12
    \le K_5 t^\mez\bigl(\int_0^t \hnorm{u}{m}^2\bigr)^\mez \le K_5t^\mez \sqrt{M}\eps,\quad K_5=K_5(m).
    \]
From the previous two estimates, we obtain \eqref{est:intu2} with $K=K(C_0, m)$.
    \end{proof}

\begin{prop}\label{prop:h2decay}
   There is a positive  constant $\mu=\mu(\eps_0, \|\rho_s''\|_{W^{1, \infty}}, c_0, m)\le 1$ such that  if  $M\eps^2<\mu$ and \eqref{assum:boot} holds, then there  exists a constant \(C_0=C_0(\|\rho_s''\|_{W^{1, \infty}}, c_0, m)>0\) independent of \((T, M, \eps)\) such that the following holds for all $t\in (0, T]$:
    \begin{equation}\label{bound:h2decay}
    \frac{2}{t} \int_{t/2}^t \hnorm{u_2}{2}^2 \le \frac{C_0M\eps^2}{t^m}.
    \end{equation}
\end{prop}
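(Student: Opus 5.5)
We plan to run a continuity argument in time, fed by the two differential inequalities of Proposition \ref{prop:h2difineq} and by Lemma \ref{lemm:dineq4}. The $L^2$ input comes from the time-average decay $\frac{2}{t}\int_{t/2}^t\tnorm{u}^2\le C_\sharp M\eps^2 t^{-m-1}$ of Proposition \ref{prop:potentialalgdecay}.

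Set $H(t)=\tnorm{\pay^2\eta}^2+\beta\tnorm{\pay\pad\eta}^2$, with a small weight $\beta\in(0,1]$ depending on $c_0$ and $m$. Under \eqref{assum:boot} we have $\hnorm{\eta}{m}\le\sqrt{M}\eps<\sqrt{\mu}$, so taking $\mu$ small makes the factors $-\frac74c_0+c_1\hnorm{\eta}{m}$ at most $-\frac32 c_0$.

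The term $c_0\tnorm{\pay\na u_2}^2$ in \eqref{p12theta} has the wrong sign. After multiplication by $\beta\le 1$, it is absorbed by the good term $-\frac32c_0\tnorm{\pay\na u_2}^2$ from \eqref{p11theta}. The cross term $\beta c_1\tnorm{\pay\na u_2}\tnorm{\pay\pad\eta}\hnorm{\eta}{m}$ is handled by Young's inequality as
\[
\beta c_1\tnorm{\pay\na u_2}\tnorm{\pay\pad\eta}\hnorm{\eta}{m}\le \tfrac{c_0}{4}\tnorm{\pay\na u_2}^2+C\hnorm{\eta}{m}^2\tnorm{\pay\pad\eta}^2 .
\]
The last piece, $C\hnorm{\eta}{m}^2\tnorm{\pay\pad\eta}^2$, goes into an $h(t)H(t)$ term. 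Alternatively, since $\tnorm{\pay\pad\eta}\le C\hnorm{u}{m}$ by \eqref{ineq:pthetahk}, one can bound it by $C\mu\hnorm{u}{m}^2$ times the other factor. We expect to obtain
\[
\ddt H\le -c\,\hnorm{u_2}{2}^2+h(t)H+C_2\tnorm{u}^2,\qquad h:=C_1\big(\hnorm{u}{m}^2+\pnorm{\p_2u_2}{\infty}\big)+C\hnorm{\eta}{m}^2 .
\]
Here we use $\tnorm{\pay\na u_2}^2+\tnorm{\na\p_2u_2}^2\gtrsim\hnorm{u_2}{2}^2-\tnorm{u_2}^2$, and the $\tnorm{u_2}^2$ error is put into $g$. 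The term $\hnorm{\eta}{m}^2$ is not time-integrable, so it is cleaner not to create it. Instead we bound $c_1\tnorm{\pay\na u_2}\tnorm{\pay\pad\eta}\hnorm{\eta}{m}\le c_1\sqrt{\mu}\,\tnorm{\pay\na u_2}\,C\hnorm{u}{m}$ and absorb it by Young's inequality into the dissipation plus $C\mu\hnorm{u}{m}^2$. Since $\hnorm{u}{m}^2$ has no pointwise decay, this error would belong in $g$, where it is not allowed; hence we prefer the $hH$ route with $h\ni C\hnorm{u}{m}^2$. This works because $\tnorm{\pay\pad\eta}\le \tnorm{\p_1\na\eta}$ and $\hnorm{\eta}{m}\le C\hnorm{u}{m}$ fails in general, so we keep $\hnorm{\eta}{m}\tnorm{\pay\na u_2}\tnorm{\pay\pad\eta}\le\sqrt\mu(\delta\tnorm{\pay\na u_2}^2+C_\delta\tnorm{\pay\pad\eta}^2\hnorm{\eta}{m})$. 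Then $\hnorm{\eta}{m}\tnorm{\pay\pad\eta}^2\le \sqrt{\mu}\,H/\beta$, but again without integrability. This is the delicate bookkeeping step, and we will resolve it by bounding $\tnorm{\pay\pad\eta}\lesssim\hnorm{u}{m}$, giving $h\sim\hnorm{u}{m}^2$.

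Next we check the hypotheses of Lemma \ref{lemm:dineq4}, with $f=c\hnorm{u_2}{2}^2$, $g=C_2\tnorm{u}^2$ and $n=m$. The bound on $g$ is the second part of \eqref{tad:pE}. For $H$, since $\pay\eta=\Delta\psi$ and $\pay^2\eta=-\Delta u_2$, we have $H\lesssim\hnorm{u}{1}^2$. Interpolating, $\hnorm{u}{1}^2\le\tnorm{u}^{2(m-1)/m}\hnorm{u}{m}^{2/m}$. Hölder in time combined with \eqref{tad:pE} and \eqref{assum:boot} then gives $\frac2t\int_{t/2}^tH\lesssim M\eps^2 t^{-(m+1)(m-1)/m-1/m}=M\eps^2t^{-m}$, which is exactly the rate needed.

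The main obstacle is $\int_0^T h\le1$. The part $C_1\int\hnorm{u}{m}^2\le C_1M\eps^2\le C_1\mu$ is small once $\mu$ is small. The part $\int_0^T\pnorm{\p_2u_2}{\infty}$ is circular, because it is controlled by Lemma \ref{lemm:nau2integ} only when \eqref{bound:h2decay} is already known. We close this with a continuity argument. Let $T_1\le T$ be the largest time on which \eqref{bound:h2decay} holds with constant $2C_0$; it is positive by continuity, since the estimate holds near $t=0$ from the $H^m$ bound. On $[0,T_1]$, Lemma \ref{lemm:nau2integ} gives $\int_0^{T_1}\pnorm{\na u_2}{\infty}\le K(2C_0,m)\sqrt{M}\eps\le K\sqrt{\mu}$, so $\int_0^{T_1}h\le1$ for $\mu$ small. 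Lemma \ref{lemm:dineq4} then yields \eqref{bound:h2decay} with the constant $C_0:=8e(2^{m-1}+1)A/c$, where $A\sim(C_\sharp+C_2)M\eps^2$ is independent of $K$; this is strictly better than $2C_0$. By continuity, $T_1=T$. The point to verify is that $C_0$ is fixed before $\mu$ is chosen, which holds because $C_0$ depends only on $C_\sharp$, $C_2$, $c$ and $m$.
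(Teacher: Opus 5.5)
There is a genuine gap, at the step you yourself flag as delicate.

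\textbf{Derivative count.} $\pay^2\eta=-\Delta u_2$ and $\pay\pad\eta=\Delta u_1$ are second derivatives of $u$. So $H\lesssim\hnorm{u}{2}^2$, not $\hnorm{u}{1}^2$. Interpolating $H^2$ between $L^2$ and $H^m$ with \eqref{tad:pE} and \eqref{assum:boot} gives only $\frac2t\int_{t/2}^t\tnorm{\pay\pad\eta}^2\lesssim M\eps^2t^{-(m-1)}$. Lemma \ref{lemm:dineq4} is therefore available only with $n=m-1$. That still yields the target rate $t^{-m}$, but it forces every term placed in $g$ to average like $t^{-m}$.

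\textbf{The cross term.} The term $c_1\tnorm{\pay\na u_2}\tnorm{\pay\pad\eta}\hnorm{\eta}{m}$ from \eqref{p12theta} then fits nowhere:
\begin{itemize}
\item After Young's inequality, $\hnorm{\eta}{m}^2\tnorm{\pay\pad\eta}^2$ cannot go into $hH$, since $\hnorm{\eta}{m}^2$ is merely bounded and not integrable in time. It cannot go into $g$ either, since $\tnorm{\pay\pad\eta}^2$ only averages like $t^{-(m-1)}$.
\item Your fix, replacing one factor $\tnorm{\pay\pad\eta}$ by $C\hnorm{u}{m}$, leaves $\hnorm{\eta}{m}\hnorm{u}{m}\sqrt{H}$. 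This is neither $hH$ with $\int h\le 1$ nor a decaying $g$. Replacing both factors leaves a non-decaying $g$.
\item Putting the cross term into $g$ directly and using only the bootstrap rate $\frac2t\int\tnorm{\pay\na u_2}^2\lesssim C_0M\eps^2t^{-m}$ gives, by Cauchy--Schwarz in time, $\sqrt{M}\eps\cdot t^{-m/2}\cdot t^{-(m-1)/2}=t^{-(m-\frac12)}$. This is half a power short.
\end{itemize}
Your weighted single functional does neutralize the wrong-sign term $\beta c_0\tnorm{\pay\na u_2}^2$, but not this one.

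\textbf{The missing idea} is the two-level hierarchy. First apply Lemma \ref{lemm:dineq4} to \eqref{p11theta} alone, with $n=m$. This is legitimate because $\tnorm{\pay^2\eta}^2=\tnorm{\Delta u_2}^2\le\hnorm{u_2}{2}^2$ inherits the rate $C_0M\eps^2t^{-m}$ from the bootstrap hypothesis itself. It gives the improved rate $\frac2t\int_{t/2}^t\tnorm{\pay\na u_2}^2\lesssim(1+C_0)M\eps^2t^{-(m+1)}$.

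Only with this extra half-power in each factor can the cross term, together with the wrong-sign term, be placed in $g_2$. Its average is then $\lesssim(1+C_0)^{1/2}(M\eps^2)^{3/2}t^{-(m+1)/2}t^{-(m-1)/2}=t^{-m}$. A second application of Lemma \ref{lemm:dineq4}, to \eqref{p12theta} with $n=m-1$, then yields \eqref{bound:h2decay}.

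\textbf{Consequence for your closing step.} The resulting constant is not independent of $C_0$, contrary to your claim. It grows like $(1+C_0)^{1/2}+(1+C_0)^{(m-1)/m}$. Your continuity argument with $2C_0$ can still close, but only because these powers are sublinear: choose $C_0$ large first, then $\mu$, which depends on $C_0$ through $K(C_0,m)$ in Lemma \ref{lemm:nau2integ}.
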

\begin{proof}
    First, we note that  \eqref{assum:boot} implies   \eqref{bound:h2decay} with strictly inequality for all \( t < \min\{1, T\}\), provided $C_0\ge 1$. For a large $C_0$ that will  be determined, we assume for the sake of contradiction that \eqref{bound:h2decay} does not  hold for all \(t \le T.\) Then there exists a minimum \(T^*<T\) such that 
    \begin{equation}
    \frac{2}{T^*} \int_{T^*/2}^{T^*}\hnorm{u_2}{2}^2 = \frac{C_0M\eps^2}{{T^*}^m}. \label{u2h2contra}
    \end{equation}
   The remainder of this proof consists of two steps:  1)  we apply Lemma \ref{lemm:dineq4} to the differential inequality \eqref{p11theta} to get a decay rate $\frac{1}{t^{m+1}}$ for \(\tnorm{\p_1\na u_2}^2\); 2) using this rate, we apply Lemma \ref{lemm:dineq4} to \eqref{p12theta} to reach a contradiction.

    Let $C_1$ and $C_2$ be the constants in \eqref{p11theta}. We set $C_3=\max\{C_1, C_2, c_0, c_1\}$, $H_1(t) = \tnorm{\p_1^2\eta}^2$, $f_1(t) = \frac32 c_0 \tnorm{\p_1\na u_2}^2$,  $h_1(t)=C_3(\hnorm{u}{m}^2+\pnorm{\p_2u_2}{\infty})$,  and $g_1(t) = C_3\tnorm{u_2}^2$. We impose that $c_1\sqrt{M}\eps\le \frac{1}{4}c_0$,  so that \eqref{assum:boot} implies $(-\frac{7}{4}c_0+c_1\| \eta\|_{H^m}) \tnorm{\pay \na u_2}^2\le -f_1$. Hence, it follows from \eqref{p11theta} that
    \[
    \ddt H_1(t) \le -f_1(t) + h_1(t)\, H_1(t) + g_1(t).
    \]
Since  \(\p_1^2\eta =- \Delta u_2\), \eqref{u2h2contra} implies 
    \[
    \frac{2}{t}\int_{t/2}^t H_1 \le \frac{C_0M\eps^2}{t^m}\quad\forall t\le T^*.
    \]
    Choosing $M\eps^2\le \eps_0^2$, we have by virtue of Lemma  \ref{prop:potentialalgdecay} that
    \[
    \frac{2}{t}\int_{t/2}^t g_1(s)\,ds  \le \frac{C_3C_\sharp M\eps^2}{t^{m+1}}.
    \]
In view of \eqref{u2h2contra}, we can apply Lemma \ref{lemm:nau2integ} with $T_1=T^*\le T$  to have
    \[
    \int_0^{T^*}h_1(t)\,dt = \int_0^{T^*} C_3(\hnorm{u}{m}^2+\pnorm{\p_2u_2}{\infty}) \le C_3M\eps^2+K\sqrt{M}\eps,\quad K=K(C_0, m).
    \]
    We impose that 
    \[
    M\eps^2<\min\{1, (C_3+K)^{-2}\},
    \]
    so that $\int_0^{T^*}h_1<1$ and Lemma \ref{lemm:dineq4} yields 
    \begin{equation}\label{decay:p1nabu2}
        \frac{2}{t}\int_{t/2}^t \tnorm{\p_1\na u_2}^2= \frac{2}{t}\int_{t/2}^t \frac{2}{3c_0}f_1(s)ds \le \frac{4}{3c_0}8e(2^{m-1}+1)\max\{C_0, C_3C_\sharp  \}\frac{M\eps^2}{t^{m+1}}\le C_5(1+C_0) \frac{M\eps^2}{t^{m+1}}.
    \end{equation}
  Next, we set $H_2(t) = \tnorm{\p_1\p_2\eta}^2$, $f_2(t)= \frac32 c_0\hnorm{u_2}{2}^2$,  $h_2(t) = C_3(\hnorm{u}{m}^2+ \pnorm{\p_2u_2}{\infty})\equiv h_1(t)$, and $g_2(t) = C_3\bigl(\tnorm{\p_1\na u_2} \tnorm{\p_1\p_2\eta}\hnorm{\eta}{m}  +\tnorm{\p_1\na u_2}^2+ \tnorm{u}^2\bigr)$.  Under the condition $c_1\sqrt{M}\eps\le \frac{1}{4}c_0$ previously imposed,  \eqref{p12theta} reads
    \[
    \ddt H_2(t) \le -f_2(t) + h_2(t)\, H_2(t) + g_2(t).
    \]
    Since \(\p_1\p_2\eta = \p_2\Delta \psi = \Delta u_1,\) interpolating \(H^2\) between \(L^2\) and \(H^m\) gives  
\[
    \frac{2}{t}\int_{t/2}^t  \tnorm{\p_1\p_2\eta}^2   \le 
    \frac{2}{t} \int_{t/2}^t \tnorm{u}^{\frac{2(m-2)}{m}} \, \hnorm{u}{m}^{\frac{4}{m}}
    \le  \bigl(\frac{2}{t}\int_{t/2}^t \tnorm{u}^2\bigr)^{\frac{m-2}{m}} \bigl(\frac{2}{t}\int_{t/2}^t \hnorm{u}{m}^2\bigr)^{\frac{2}{m}}.
\]
Then we  invoke \eqref{assum:boot} and the second estimate in \eqref{tad:pE} to deduce 
    \bq \label{uh2decay}
      \frac{2}{t}\int_{t/2}^t H_2(s)\,ds   \le {C_6M\eps^2}\, (\frac{1}{t^{m+1}})^{\frac{m-2}{m}}\, (\frac{1}{t})^{\frac{2}{m}}
    = \frac{C_6M\eps^2}{t^{m-1}}.
      \eq
     We recall that \(\int_0^{T*} h_1(s)\,ds  < 1.\) As for the first term in \(g_2(t)\),  we use \eqref{assum:boot}, \eqref{decay:p1nabu2}, and \eqref{uh2decay}: 
         \begin{align*}
        \frac{2}{t} \int_{t/2}^t \tnorm{\p_1\na u_2} \, \tnorm{\p_1\p_2\eta} \hnorm{\eta}{m}
        &\le (\sup_{t\in[0,T]} \hnorm{\eta}{m}) \bigl(\frac{2}{t} \int_{t/2}^t \tnorm{\p_1\na u_2}^2\bigr)^\mez \, \bigl(\frac{2}{t} \int_{t/2}^t \tnorm{\p_1\p_2\eta}^2\bigr)^\mez  \\ 
        &\le C_7(M\eps^2)^\mez \, ( \frac{(1+C_0)M\eps^2}{t^{m+1}})^\mez \, \bigl( \frac{M\eps^2}{t^{m-1}})^\mez\\
        & =\frac{C_7(1+C_0)^\mez(M\eps^2)^\mez M\eps^2}{t^m}.
    \end{align*}
    For the second term in \(g_2,\) we interpolate between  \eqref{decay:p1nabu2} and \eqref{assum:boot}:
    \begin{align*}
        \frac{2}{t}\int_{t/2}^t \tnorm{\p_1\na u_2}^2 & = \bigl( \frac{2}{t} \int_{t/2}^t \tnorm{\p_1\na u_2}^2\bigr)^\frac{m-1}{m} \, \bigl(\frac{2}{t} \int_{t/2}^t \tnorm{\p_1\na u_2}^2\bigr)^\frac{1}{m}  \\ 
        & \le \bigl( \frac{2}{t} \int_{t/2}^t \tnorm{\p_1\na u_2}^2\bigr)^\frac{m-1}{m} \, \bigl(\frac{2}{t} \int_0^{T^*} \|u_2\|_{H^m}\bigr)^\frac{1}{m} \\
         &\le C_8(1+C_0)^\frac{m-1}{m}  M \eps^2 (\frac{1}{t^{m+1}})^\frac{m-1}{m} \, (\frac{1}{t})^\frac{1}{m}\\
         &= C_8(1+C_0)^\frac{m-1}{m}  \frac{M \eps^2}{t^m}.
             \end{align*}
  Similarly, we can interpolate between the bounds for $u$ in \eqref{tad:pE} and  \eqref{assum:boot} to have
  \[
   \frac{2}{t}\int_{t/2}^t \| u\|_{L^2}^2 \le  C_9 \frac{M \eps^2}{t^m}.
   \]
   By imposing that $M\eps^2\le 1$, we obtain
   \bq\label{g2:est}
   \frac{2}{t}\int_{t/2}^t g_2\le C_3\left[C_7(1+C_0)^\mez+C_8(1+C_0)^\frac{m-1}{m}+C_9\right]  \frac{M \eps^2}{t^m}.
   \eq
  In view of the bounds \eqref{uh2decay} and  \eqref{g2:est}, applying Lemma \ref{lemm:dineq4} gives
    \[
    \frac{2}{t} \int_{t/2}^t \hnorm{u_2}{2}^2 \le  \frac{2}{3c_0}8e(2^{m-1}+1)\left[C_6+C_3C_7(1+C_0)^\mez+2C_3C_8(1+C_0)^\frac{m-1}{m}\right]  \frac{M \eps^2}{t^m},\quad t\le T^*.
    \]
    Evaluating this at $t=T^*$ and recalling \eqref{u2h2contra}, we arrive at  
       \bq\label{final:u2bound:proof}
       \begin{aligned}
    \frac{C_0M\eps^2}{{T^*}^m}& = \frac{2}{T^*} \int_{T^*/2}^{T^*} \hnorm{u_2}{2}^2\\
    & \le \frac{16e(2^{m-1}+1)}{3c_0}\left[C_6+C_3C_7(1+C_0)^\mez+C_3C_8(1+C_0)^\frac{m-1}{m}+C_3C_9\right] \frac{M \eps^2}{{T^*}^m}.
    \end{aligned}
    \eq
   We observe that the powers of $C_0$ on the right-hand side of \eqref{final:u2bound:proof} are strictly less than $1$. Therefore,  by choosing $C_0=C_0(c_0, C_3, C_6, C_7, C_8, C_9)=C_0(\|\rho_s''\|_{W^{1, \infty}}, c_0, m)$ sufficiently large, we reach a contradiction,  provided 
    \[
    M\eps^2<\min\left\{\eps_0^2, \frac{c_0}{4c_1}, 1, (C_3+K)^{-2} \right\}=:\mu(\eps_0, c_0, c_1, C_0, C_3, m)=\mu(\eps_0, \|\rho_s''\|_{W^{1, \infty}}, c_0, m).
    \]
\end{proof}


    \subsubsection{Proof of Theorem \ref{theo:stability}}
   Let $\eta_0\in H^m_e$, with $m\ge 3$ and $\| \eta_0\|_{H^m}\le \eps$.  By the local well-posedness in Proposition \ref{prop:lwp},  \eqref{eq:pipm} has a unique solution $\eta\in C([0, T^0), H^m_e)$, where $T^0\in (0, \infty]$ is the maximal existence time. 
   
    We claim that there are positive constants $M$  sufficiently large and $\eps$  sufficiently small, both independent of $T^0$, such that  \eqref{assum:boot} holds for all $T<T^0$. This claim would then imply
   \[
   \lim\sup_{t\to T^0}\| \eta(t)\|_{H^m}\le \sqrt{M}\eps,
   \]
   and hence $T^0=\infty$ in view of the blowup criterion \eqref{blowup}. In order to prove the above claim, we will invoke the $H^m$ estimate in Proposition \ref{prop:aprioriestimate}, which requires the additional regularity $\eta(t)\in H^{m+1}(\Omega)$. To justify this additional regularity, we approximate $\eta_0\in H^m_e$ by $\eta_0^{[k]}=\proj_k \eta_0\in H^\infty_e$, where  $\proj_k$ is the projection onto $\text{span}\{ \omega_{p, q}: |p|\le k,~q\le k\}$. Since $\rho_s'\in H^{m+1}$ and $\rho_s''\in H^m_e\subset H^{m-1}_e$ for $\rho_s\in \mathcal{S}_m$, we can apply Proposition \ref{prop:lwp} with $m+1$ in place of $m$ to obtain a unique solution $\eta_k\in C([0, T^0_k), H^{m+1}_e)$, where $T^0_k$ is the maximal time. Therefore, it suffices to prove the above claim for $\eta_k$ and let $k\to \infty$ to obtain the claim for $\eta$. In what follows, we drop the dependence on $k$ to alleviate notation. 
   
   For $M$ and $\eps$ that satisfy $M\eps^2<\mu_0\le 1$ and will be chosen later, we assume for the sake of contradiction that \eqref{assum:boot} ceases to hold after some time  $T^*\in (0, T^0)$. Then we have 
    \begin{align}\label{assum:bootcontra}
        \hnorm{\eta(T^*)}{m}^2 + \int_{0}^{T^*}\hnorm{u}{m}^2 \ = M\eps^2
    \end{align}
    by continuity.  By Proposition  \ref{prop:aprioriestimate}, $\eta$ satisfies 
    \bq\label{Hmest:100}
    \mez \ddt \hnorm{\eta}{m}^2 \le -\frac34c_0\hnorm{u}{m}^2 
        + \frac{A}{2}(\pnorm{\na u_2}{\infty} \hnorm{\eta}{m}^2 +  \hnorm{u}{m}^2\hnorm{\eta}{m}+\tnorm{u}^2),\quad t<T^0,
        \eq
        where $A=2\max\{A_1, A_2\}$. From \eqref{assum:boot}  and  the embedding $H^m(\Omega)\hookrightarrow W^{1, \infty}(\Omega)$, we have
\[
\forall t\le T^*,~\|\na u(t)\|_{L^\infty}\le A_3 \| u(t)\|_{H^m}\le A_4 \| \eta(t)\|_{H^m}\le A_4,\quad A_j=A_k(m).
\]
Inserting this into \eqref{Hmest:100}, we obtain $\ddt \hnorm{\eta}{m}^2 \le A_5 \hnorm{\eta}{m}^2$,  whence 
    \bq\label{crude:Hmeta}
     \hnorm{\eta{(t})}{m}^2 \le \| \eta(0)\|_{H^m}^2e^{A_5t}\le  \eps^2e^{A_5t},\quad t\le T^*.
     \eq
      We then insert this bound into \eqref{assum:bootcontra} to have
    \begin{align*}
    M\eps^2 = \hnorm{\eta{(T^*)}}{m}^2 + \int_0^{T^*} \hnorm{u}{m}^2
    \le \hnorm{\eta(T^*)}{m}^2 + A_6\int_0^{T^*}\hnorm{\eta}{m}^2
    \le A_7\eps^2e^{A_5T^*},\quad A_7=A_7(m).
    \end{align*}
     This implies $\log M\le \log A_7+A_5T^*$. We choose $M>A_7$ and set 
     \[
     \tilde{T}=\frac{1}{2A_5}(\log M-\log A_7)\in (0, \mez T^*). 
     \]
     
      Then, we integrate  \eqref{Hmest:100} between \(\tilde{T}\) and \(T^*>\tilde T\) to obtain
    \begin{align}\label{estimate:hminteg}
        \hnorm{\eta(T^*)}{m}^2 - \hnorm{\eta(\tilde{T})}{m}^2 + \frac32c_0 \int_{\tilde{T}}^{T^*} \hnorm{u}{m}^2 &\le 
        A\sup_{t \in [0,T^*]} \hnorm{\eta(t)}{m}^2 \int_{\tilde{T}}^{T^*}\pnorm{\na u_2}{\infty} \nonumber \\ 
        & \qquad+ A\sup_{t \in [0,T^*]}\| \eta(t)\|_{H^m}\int_{\tilde{T}}^{T^*}\hnorm{u}{m}^2 +A \int_{\tilde{T}}^{T^*} \tnorm{u}^2.
    \end{align}
    The exponential bound \eqref{crude:Hmeta} implies 
    \[
     \hnorm{\eta(\tilde{T})}{m}^2 \le \eps^2e^{A_5\tilde{T}}=\eps^2 A_7^{-\frac{1}{2}}M^\mez .
    \]
   For $M\eps^2<\mu$, we can apply  Proposition \ref{prop:h2decay},  Lemma \ref{lemm:nau2integ} , and  \eqref{assum:boot}  to  have 
    \[
  \sup_{t \in [0,T^*]} \hnorm{\eta(t)}{m}^2   \int_{\tilde{T}}^{T^*}\pnorm{\na u_2}{\infty} +   \sup_{t \in [0,T^*]}\| \eta(t)\|_{H^m}\int_{\tilde{T}}^{T^*}\hnorm{u}{m}^2\le (K+1)(M\eps^2)^\tdm.
    \]
    On the other hand, in view of the second decay estimate in \eqref{tad:pE}, an application of Lemma \ref{lemm:dineq3} yields  
    \[
     \int_{\tilde{T}}^{T^*} \tnorm{u}^2\le \frac{A_8 M\eps^2}{\tilde{T}^m}=\frac{A_9 M\eps^2}{(\log M-\log A_7)^m}.
    \]
    Inserting the above estimates into \eqref{estimate:hminteg}, we find 
    \[
     \hnorm{\eta(T^*)}{m}^2+ \frac32c_0 \int_{\tilde{T}}^{T^*} \hnorm{u}{m}^2 \le \eps^2 A_7^{-\frac{1}{2}}M^\mez+A(K+1)(M\eps^2)^\tdm+\frac{AA_9 M\eps^2}{(\log M-\log A_7)^m}.
     \]
    Combining this with equation \eqref{assum:bootcontra}, we obtain 
           \[
   \min\{1, \tdm c_0\}\le \frac{A_7^{-\frac{1}{2}}}{M^\mez}+A(K+1)(M\eps^2)^\mez+\frac{AA_9}{(\log M-\log A_7)^m}.
     \]
     This yields a contradiction if we choose $M=M(c_0, A, K, m)=M(c_0, \|\rho'_s\|_{H^{m+1}}, m)$ sufficiently large, followed by  $\eps<\eps_0=(\mu/M)^\mez$ sufficiently small. Therefore, \eqref{assum:boot} holds for all $T>0$. In view of \eqref{monotone:rho},  we can apply Propositions \ref{prop:potentialdecay} and \ref{prop:potentialalgdecay} to have $\rho^*(\cdot, t)=\rho^*(\cdot, 0)$ for all $t>0$, and in conjunction with \eqref{potentialequiv}, 
     \begin{align*}
\frac{c_0}{4\|\p_2\rho(x, t)\|_{L^\infty}^2} \| \rho(\cdot, t)- \rho^*(\cdot, 0)\|_{L^2}^2&= \frac{c_0}{4\|\p_2\rho(x, t)\|_{L^\infty}^2}\| \rho(\cdot, t)- \rho^*(\cdot, t)\|_{L^2}^2\\
&\le     \mathcal{E}(t) \le \frac{C_\sharp M\eps^2}{t^m},\quad t>0.
     \end{align*}
 Since $\|\p_2\rho(x, t)\|_{L^\infty}\le \frac{c_0}{2}+\| \rho_s'\|_{L^\infty}$, this implies the decay estimate \eqref{decay:mainthm}.

\appendix 
\section{Biot-Savart law}
We will consider $\Omega=\T\times (-1, 1)$ or $\Omega=\T\times \Rr$. First, we prove the existence of a stream function with good vanishing properties. 
\begin{lemm}\label{lemm:stream}
Suppose that $(u, p)\in H^1(\Omega)\times H^1(\Omega)$ satisfy Darcy's law. Then $u$ admits a stream function $\psi\in H^2(\Omega)$, i.e.,  $u= \nabla^\perp \psi := (\partial_2 \psi, -\partial_1 \psi)$, and $\psi\vert_{\partial\Omega} = 0$ for $\Omega=\T\times (-1, 1)$.
\end{lemm}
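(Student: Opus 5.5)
Because $u\in H^1(\Omega)$ is divergence free, a stream function exists precisely when the closed one-form $u_1\,dy-u_2\,dx$ is exact. I would therefore construct $\psi$ by hand, using the Darcy law only to get rid of the one topological obstruction: the flux through a vertical section. Concretely, I set
\[
\psi(x,y)=\int_{y_0}^{y}u_1(x,s)\,ds+\phi(x),
\]
with $y_0=-1$ for the channel and, for the cylinder, $\phi(x)=-\int_0^x u_2(\xi,y_0)\,d\xi$ at a fixed height $y_0$. Then $\p_2\psi=u_1$ holds by construction. For the other component,
\[
\p_1\psi(x,y)=\int_{y_0}^y\p_1u_1\,ds+\phi'(x)=-\int_{y_0}^y\p_2u_2\,ds+\phi'(x)=-u_2(x,y)+u_2(x,y_0)+\phi'(x).
\]
So $-\p_1\psi=u_2$ once $\phi'(x)=-u_2(x,y_0)$. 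In the channel, the boundary condition $u_2(\cdot,-1)=0$ gives this with $\phi\equiv0$. These computations are done first for smooth $u$ and then extended to $H^1$ by density, since the trace of $u$ on horizontal lines is well defined.

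The key point is periodicity in $x$. The formula is only well defined on $\T$ if $\int_\T \p_1\psi\,dx=0$, and in the channel this is equivalent to $\p_2\int_\T\psi\,dx=\int_\T u_1\,dx=0$ for every $y$. This is exactly where Darcy's law enters. From $u_1=-\p_1p$ and the periodicity of $p\in H^1$ in $x$, we get $\int_\T u_1(x,y)\,dx=-\int_\T\p_1p\,dx=0$ for a.e. $y$. For the cylinder, periodicity of $\phi$ requires $\int_\T u_2(x,y_0)\,dx=0$. This follows from incompressibility: $\p_2\int_\T u_2\,dx=-\int_\T\p_1u_1\,dx=0$, so $\int_\T u_2\,dx$ is constant in $y$, and since it is also in $L^2(\Rr)$ the constant must vanish.

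It remains to check the boundary values in the channel. On $y=-1$ we have $\psi(x,-1)=0$ by construction. On $y=1$,
\[
\psi(x,1)=\int_{-1}^1u_1(x,s)\,ds,
\]
and this is independent of $x$ because its $x$-derivative equals $-u_2(x,1)+u_2(x,-1)=0$. Integrating over $\T$ and using $\int_\T u_1\,dx=0$ shows the constant is zero, so $\psi|_{\p\Omega}=0$.

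Finally, $\na\psi=(-u_2,u_1)$ belongs to $H^1$, so $\psi\in H^2_{loc}$. In the channel, $\psi\in L^2$ by Poincar\'e, since $\psi$ vanishes on the boundary. In the cylinder the $L^2$ bound needs more care. I would fix the constant so that $\int_\T\psi\,dx=0$; this is consistent because $\p_2\int_\T\psi\,dx=\int_\T u_1\,dx=0$, so the $x$-mean is constant in $y$. Then the Poincar\'e inequality in $x$ on each line gives $\|\psi\|_{L^2}\le C\|\p_1\psi\|_{L^2}=C\|u_2\|_{L^2}$. I expect the main obstacle to be this step, together with justifying the line-trace and density arguments at the $H^1$ level.
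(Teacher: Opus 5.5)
Your proof is correct and follows the paper's argument. In the channel you integrate $u_1$ from $y=-1$, use $u_2(\cdot,\pm1)=0$ to see that $\psi(\cdot,1)$ is constant, and use Darcy's law ($\int_\T u_1\,dx=0$) to show that constant is zero. In the cylinder you get periodicity from the vanishing of $\int_\T u_2\,dx$ (incompressibility plus $u\in L^2$), and $\psi\in L^2$ from Darcy's law together with the Poincar\'e inequality in $x$. The only difference is that in the cylinder you integrate $u_1$ in $y$ first, whereas the paper integrates $u_2$ in $x$ first; this changes nothing of substance.

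One small correction concerns your paragraph on periodicity. In the channel, $\int_{-1}^y u_1(x,s)\,ds$ is automatically periodic in $x$. Also, $\int_\T \p_1\psi\,dx=-\int_\T u_2\,dx$ is not equivalent to $\int_\T u_1\,dx=0$. So in the channel Darcy's law enters only through the boundary value at $y=1$, which is exactly how you use it in your next paragraph.
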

\begin{proof}
{\it Case 1:} $\Omega=\T\times (-1, 1)$. We define $\psi(x, y)=\int_{-1}^y u_1(x, y')dy'$, so that $\psi\in H^2(\Omega)$, $\p_y\psi =u_1$ and $\psi(\cdot, -1)=0$. Moreover, since $ \dv u=0$ and $u_2(x, \pm 1)=(u\cdot n)(x, \pm 1)=0$, we have 
\[
\p_x\psi (x, y)=-\int_{-1}^y \p_2u_2(x, y')dy'=-u_2(x, y)+u_2(x, -1)=-u_2(x, y). 
\]
In particular, we have $\p_x \psi(x, 1)=-u_2(x, 1)=0$, and hence $\psi(x, 1)=c$ is constant. On the other hand, the first component of Darcy's law gives 
\[
c=\psi(x, 1)=\int_{-1}^1 u_1(x, y')dy'=-\int_{-1}^1 \p_xp(x, y')dy'=-\p_x\int_{-1}^1 p(x, y')dy'.
\]
Since $p(x, z)$ is periodic in $x$, so is $\int_{-1}^1 p(x, z)dz$. This implies $c=0$. 

{\it Case 2:} $\Omega=\T\times \Rr$. We set $\psi(x, y)=-\int_{-\pi}^x u_2(x', y)dx'+c(y)$, so that $\p_1\psi =-u_2$. Since $\dv u=0$, we have $\p_2 \psi(x, y)=u_1(x, y)-u_1(-\pi, y)+c'(y)$. Choosing $c(y)=\int_0^y u_1(-\pi, y')dy'$, we get $\p_2 \psi=u_1$. Now $\psi$ is periodic in $x$ if and only if $d(y):=\int_{-\pi}^\pi u_2(x, y)dx=0$. The incompressibility  implies that $d$ is a constant. Moreover, since $|d|^2\le 2\pi\int_{-\pi}^\pi |u(x, y)|^2dx$ and $u\in L^2(\T\times \Rr)$, we deduce that $d=0$. We have proven that $\psi: \Omega\to \Rr$ satisfies  $\na^\perp \psi=u\in H^1(\Omega)$ and $\psi\in L^2_{loc}(\Omega)$. Next, we use Darcy's law to have 
\[
\frac{d}{dy}\int_\T \psi(x, y)dx=\int_\T u_1(x, y)dx=-\int_\T \p_xp(x,  y)dx=0.
\]  
Thus $\int_\T\psi(x, y)dx=m$ is constant. We redefine $\psi$ by $\psi -\frac{1}{|\T|}m$, so that $\na^\perp \psi=u$ and $\int_\T \psi(x, y)dx=0$. Consequently, $\psi\in L^2(\Omega)$ by Poincar\'e's inequality.
\end{proof}
\begin{lemm}\label{lemm:BS}
Suppose that $(\rho, u, p)\in H^1(\Omega)\times H^1(\Omega)\times H^1(\Omega)$ satisfy Darcy's law. Then the Biot-Savart law $ u= \nabla^\perp \Delta_D^{-1}\partial_1 \rho$ holds in $H^1(\Omega)$. 
\end{lemm}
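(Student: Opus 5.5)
We will prove Lemma \ref{lemm:BS} by starting from the stream function of Lemma \ref{lemm:stream} and identifying $\psi$ with $\Delta_D^{-1}\p_1\rho$.

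\textbf{Step 1: the Poisson equation for $\psi$.} By Lemma \ref{lemm:stream}, there is $\psi\in H^2(\Omega)$ with $u=\na^\perp\psi$. When $\Omega=\T\times(-1,1)$ we also have $\psi|_{\p\Omega}=0$, and when $\Omega=\T\times\Rr$ we have $\int_\T\psi(x,\cdot)\,dx=0$. We take the curl of Darcy's law $u+\na p=-(0,\rho)$. Since $\p_1u_2-\p_2u_1=-\Delta\psi$ and the curl of $\na p$ vanishes, we obtain
\[
-\Delta\psi=\p_1u_2-\p_2u_1=-\p_1\rho .
\]
To avoid using second derivatives of $p$, we verify this weakly. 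Testing against $\vp\in C_c^\infty(\Omega)$ (periodic in $x$), we compute $\int(u_1\p_2\vp-u_2\p_1\vp)$. Substituting $u=-\na p-(0,\rho)$, the pressure contributions are $\int(-\p_1p\,\p_2\vp+\p_2p\,\p_1\vp)$. This integral vanishes after one integration by parts, which is legitimate since $p\in H^1$. It follows that $\Delta\psi=\p_1\rho$ in $\mathcal{D}'(\Omega)$, and hence in $L^2(\Omega)$ because $\rho\in H^1$.

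\textbf{Step 2: uniqueness and identification.} For $\Omega=\T\times(-1,1)$, the function $\psi\in H^2\cap H^1_0$ solves the Dirichlet problem $\Delta\psi=\p_1\rho$. By uniqueness for the Dirichlet Laplacian (Lax–Milgram on $H^1_0$), $\psi=\Delta_D^{-1}\p_1\rho$.

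For $\Omega=\T\times\Rr$, we note that $\p_1\rho$ has zero $x$-mean, so $\Delta^{-1}\p_1\rho\in H^2$ is defined by \eqref{Fourier:inverseLaplace}. The difference $\phi=\psi-\Delta^{-1}\p_1\rho$ is harmonic, lies in $L^2$ (since $\psi\in L^2$ by Lemma \ref{lemm:stream}), and has zero $x$-mean. Taking the Fourier transform, $(k^2+\xi^2)\hat\phi=0$ with $k\neq0$, so $\hat\phi=0$ and $\phi=0$.

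In both cases we conclude $u=\na^\perp\psi=\na^\perp\Delta_D^{-1}\p_1\rho$. Since $\rho\in H^1$ and $\Delta_D^{-1}$ gains two derivatives, $\na^\perp\Delta_D^{-1}\p_1\rho\in H^1$, so the identity holds in $H^1(\Omega)$.

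\textbf{Main obstacle.} The delicate part is the rigorous derivation of $\Delta\psi=\p_1\rho$ with only $H^1$ regularity, carried out weakly as in Step 1. On the channel, this includes checking that the integration by parts produces no boundary terms. This is ensured by choosing compactly supported test functions and then extending by density in $H^1_0$ when invoking uniqueness. On the cylinder, the extra care goes into the uniqueness argument, which relies on the zero-mean normalization supplied by Lemma \ref{lemm:stream}.
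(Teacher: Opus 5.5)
Your proof is correct and follows the paper's argument. You take the stream function from Lemma \ref{lemm:stream}, take the curl of Darcy's law to get $\Delta\psi=\p_1\rho$, and identify $\psi=\Delta_D^{-1}\p_1\rho$; you additionally write out the weak derivation of $\Delta\psi=\p_1\rho$ and the uniqueness step (Lax--Milgram on the channel, the Fourier argument using $\psi\in L^2$ on the cylinder), both of which the paper leaves implicit.
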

\begin{proof}
 By Lemma \ref{lemm:stream}, there exists a stream function $\psi\in H^2(\Omega)$, which vanishes on the boundary if $\Omega=\T\times (-1, 1)$. Taking $\na^\perp$ of Darcy's law yields $\Delta \psi =\p_1\rho$. Thus, we obtain $\psi=\Delta_D^{-1}\p_1\rho$ and $ u= \nabla^\perp \Delta_D^{-1}\partial_1 \rho\in H^1(\Omega)$.
\end{proof}

\vspace{.1in}
{\noindent{\bf{Acknowledgment.}}   HQN was partially supported by the NSF CAREER Grant  DMS-2541807.



\end{document}